\theoremstyle{plain}
\theoremstyle{definition}
\theoremstyle{remark}
\newtheorem{thm}{Theorem}[section]
\newtheorem{lem}[thm]{Lemma}
\newtheorem*{rem}{Remark}
\newcommand{\abs}[1]{\vert#1\vert}
\newcommand{\bq}{\begin{equation}}
\newcommand{\eq}{\end{equation}}
\newcommand{\Rn}{\mathbb{R}^n}
\newcommand{\al}{\alpha}
\newcommand{\FL}{(-\Delta)^{\al/2}}
\newcommand{\FLh}{(-\Delta_h)^{\al/2}}
\newcommand{\Dd}{D}
\newcommand{\Ddh}{\Dd_h}
\newcommand{\DdCh}{\Dd^\mathbf{C}_h}
\newcommand{\GL}{{Gr\"unwald-Letnikov~}}
\newcommand{\Cal}{{C_{1,\al}}}
\newcommand{\sinc}{\operatorname{sinc}}
\newcommand{\Zh}{\mathbb{Z}_h}
\newcommand{\dr}{\,\mathrm{d}}
\numberwithin{figure}{section}
\numberwithin{table}{section}
\title{Finite difference methods for fractional Laplacians}
\author[1]{Yanghong Huang\thanks{yanghong.huang@manchester.ac.uk}}
\author[2]{Adam Oberman\thanks{adam.oberman@mcgill.ca}}
\affil[1]{School of Mathematics, The University of Manchester, Manchester, M13 9PL, UK}
\affil[2]{Department of Mathematics and Statistics, McGill University, Montreal, QC H3A 0B9, Canada}
\begin{document}
\maketitle

%
%
%
%

\begin{abstract}
The fractional Laplacian $(-\Delta)^{\alpha/2}$ is the prototypical non-local
elliptic operator. While analytical theory has been
advanced and understood for some time,
there remain many open problems in the numerical analysis of the operator.
In this article, we study  several different  finite difference
discretisations of the fractional Laplacian on uniform grids in one dimension
that takes the same form. Many properties can be compared and summarised in
this relatively simple setting, to tackle more important questions
like the nonlocality, singularity and flat tails common in practical implementations.
The accuracy and the asymptotic behaviours of the methods are also studied,
together with treatment of the far field boundary conditions,
providing a unified perspective on the further development of the scheme in higher dimensions.
\end{abstract}

\tableofcontents


\section{Introduction} \label{sec:introd}
Anomalous diffusion is a phenomenon of current interest,  occurring in various
physical systems~\cite{hilfer2000applications} as well as in financial
modelling~\cite{MR1809268,MR1937584,metzler2004restaurant}.
In this article, we focus on the numerical approximations of the fractional Laplacian,
the prototypical nonlocal operator intimately related to anomalous diffusion.
In the whole space $\Rn$, the fractional Laplacian of order $\alpha \in (0,2)$ can be
defined in several equivalent ways~\cite{MR0350027,MR0290095,Vald11}.
Two of the most common ones are given by the Fourier transform
\begin{equation}
\label{eq:fftfrac}
\mathcal{F}[\FL u](\xi) = |\xi|^{\alpha}\mathcal{F}[u](\xi).
\end{equation}
and by the singular integral
\begin{equation}\label{eq:rieszfrac}
 \FL u(x) = C_{n,\al} \, \int_{\mathbb{R}^n}\!
\frac{u(x)-u(y)}{|x-y|^{n+\al}} \,\mathrm{d}y,
\end{equation}
where the constant coefficient $C_{n,\al}$ is given by
 \begin{equation}\label{eq:constC}
 C_{n,\al} = \frac{\al 2^{\al-1}\Gamma\left(\frac{\al+n}{2}\right)}
{\pi^{n/2}\Gamma\left(\frac{2-\al}{2}\right)},
\end{equation}
and $\Gamma(t) = \int_0^\infty s^{t-1} e^{-s} \mathrm{d}s$ is the Gamma
function. Using the symmetric combination of two one-sided fractional derivatives~\cite{MR1219954,MR0361633,podlubny1998fractional,MR1347689},
the Fractional Laplacian in one dimension can be written as
\begin{equation}\label{eq:fracdevlap}
(-\Delta)^{\alpha/2}u(x) =
\frac{{}_{-\infty}D_x^\alpha u(x)+{}_xD_\infty^\al u(x)}{2\cos (\alpha
\pi/2)}, \quad \alpha \neq 1.
\end{equation}
Based on tools from functional calculus, the fractional Laplacian (more generally
the fractional power of positive operators) can be represented as
\begin{equation}\label{eq:funcalfrac}
(-\Delta)^{\alpha/2}u = \frac{1}{\Gamma(-\frac{\alpha}{2})}\int_0^\infty
( e^{s\Delta }u - u)s^{-1-\alpha/2}\dr s.
\end{equation}
Over the past decade, many important theoretical results of differential equations with
the fractional Laplacian are established by building upon the definition using
$\alpha$-harmonic extension proposed by Caffarelli and Silvestre~\cite{caffarelli2007extension}.

All the previous definitions are equivalent in the whole space~\cite{kwasnicki2015ten},
at least when the operator is applied to proper classes of functions that decay fast
enough at infinity.  However, on bounded domains there are many non-equivalent definitions,
according to the particular boundary conditions used.
For this reason, the focus of the finite difference methods here is on the whole
space, allowing a unified treatment of viewing many existing schemes
and deriving new schemes.  However, boundary conditions remain an important
consideration, and will be touched on in later sections.

The fractional Laplacian can be evaluated numerically in various ways according
to the equivalent definitions above, for instance, using either the
spectral definition in Fourier space or the singular integral representation.
A vast majority of existing schemes are based on fractional
derivatives~\cite{Meerschaert200465,Tadjeran2006205},
using variants of the classical \GL derivative~\cite{diethelm2005algorithms,MR0361633}.
When used in the discretisation of fractional diffusion
equations, the resulting schemes can usually be interpreted as
a random walk with long range jumps~\cite{gorenflo1998random,gorenflo1999discrete}.
Another popular class of schemes use spectral decomposition in Fourier
or other basis~\cite{MR3292533}, usually only valid for finite spatial domains.
In recent years, more numerical schemes are proposed, based on
either the singular integral~\cite{MR2552219,HuangOberman1}, the one from functional
calculus~\cite{ciaurri2015fractional} and the harmonic extension~\cite{Felix,MR3348172}.

Despite the abundance of existing numerical methods to choose from, the presence of nonlocality,
singularity as well as prevalent flat tails in the far field remains a  challenging problem.
In this paper, we provide a unified framework
for the finite difference approximation of the fractional Laplacian.
In one dimension, let $u = \{u_j\}_{j\in \mathbb{Z}}$ be
a function defined on the uniform grid $\Zh =\{jh \mid j \in \mathbb{Z}\}$ with spacing $h>0$.
The discrete operator, denoted as $(-\Delta_h)^{\alpha/2}$,
evaluated at $x_j=jh$ is given by
\begin{equation}\label{FLh}\tag{FLh}
   (-\Delta_h)^{\alpha/2} u_j = \sum_{k=-\infty}^\infty (u_j-u_{j-k})w_k,
\end{equation}
with some prescribed weights $\{w_k\}_{k \in \mathbb{Z}}$. This special form of the scheme
clearly resembles the singular integral~\eqref{eq:rieszfrac}
and is shown to be a multiplier in the appropriate spectral space below.
This scheme already appeared several times explicitly or implicitly in the
literature~\cite{biswas2010difference,MR2795714,MR2552219,HuangOberman1}, including the
simplest choice $w_k = C|k|^{-\alpha-1} (k\neq 0)$
from~\cite{Vald11}. However, its systematic study will be performed here for the first time, linking
different definitions in the continuous setting and motivating new development in the future.

A few reasonable constraints on the weights $w_k$ can be readily imposed without worrying about
their exact values.
Since the fractional Laplacian is symmetric under reflection,  it is natural to require
that $w_k = w_{-k}$. When the scheme~\eqref{FLh} is used in the discretisation of the simplest
fractional heat equation $u_t + (-\Delta)^{\alpha/2}u=0$ for the probability density $u$,
the explicit Euler scheme $(u_j^{n+1} - u_j^{n})/\Delta t= (-\Delta_h)^{\alpha/2}u_j^n$
for $u^{n+1}_j$ at time $t_{n+1}=(n+1)\Delta t$ and space $x_j=jh$ can be written as
\begin{equation}\label{eq:disfracheat}
u^{n+1}_j = (1+\Delta t w_0)u^{n}_j + \sum_{k\neq 0} \Delta t w_ku_{j-k}^n.
\end{equation}
The associated random walk interpretation as performed in~\cite{gorenflo1998random,gorenflo1999discrete}
suggests that the transition probability $\Delta t w_k$ is non-negative, or the weights
$w_k$ with $k\neq 0$ are non-negative. We will show that
basic properties like  symmetry and non-negativity of the weights are valid
for almost all schemes discussed later, and are essential for other related properties like
discrete maximum principle.

The analysis and numerical experiments with the scheme~\eqref{FLh} performed in this paper
is organised as follows. The semi-discrete Fourier Transform
will be introduced next, revealing the relationship between the weights
in~\eqref{FLh} and the symbol in the spectral space,
followed by some identities and inequalities in Section~\ref{sec:ind}.
Different weights are either derived or reviewed in Section~\ref{sec:weights},
and their properties are summarised and compared in Section~\ref{sec:DCweights}.
Other issues about the treatment of far field boundary conditions
and the convergence of the scheme for equations with the fractional Laplacian operator are discussed in
Section~\ref{sec:truncation} and Section~\ref{sec:conv}.  Finally we end with this paper
with several numerical experiments in Section~\ref{sec:nonlinear} and some final remarks on
the extension into higher dimensions.

\section{Semi-discrete Fourier analysis and spectral representation}
\label{sec:semiFFT}

Since $(-\Delta)^{\alpha/2}$ is a pseudo-differential operator with  symbol
$|\xi|^\alpha$, its numerical discretisation is also expected to have
a symbol (if it exists in an appropriate spectral space) that approximates
$|\xi|^\al$.  We show that \emph{semi-discrete Fourier transform},
already well-known in the numerical analysis community~\cite{MR1776072},
is the right tool to analyse the discrete scheme~\eqref{FLh}.
The one-to-one correspondence between the weights $w_k$ and
the associated symbol will be established: the symbol is defined as
a discrete sum involving the weights, and the weights
can be calculated as a Fourier integral of the symbol.
Moreover, explicit symbols are obtained  for many popular schemes from the literature,
including the spectrally accurate scheme with the exact symbol $|\xi|^\alpha$.
To facilitate the discussion below,
we introduce some function spaces on the grid $\Zh = \{hk \mid k\in\mathbb{Z}\}$ and
on the interval $I_h=[-\pi/h, \pi/h]$,
\begin{align*}
 \ell^2(\Zh)
&= \Big\{v:\Zh\to\mathbb{R}  ~\Big |~
h\sum_{j=-\infty}^\infty |v_j|^2 <\infty\Big\}, \\
L^2\left(I_h \right)
&= \Big\{ \hat{v}:
I_h \to \mathbb{R}  ~\Big |~
\int_{-\pi/h}^{\pi/h}\! |\hat{v}(\xi)|^2 \,\mathrm{d}\xi <\infty
\Big\}.
\end{align*}
Both $\ell^2(\Zh)$ and  $L^2\left(I_h \right)$ are Hilbert spaces,
endowed with the inner products
\begin{equation}\label{eq:innerprod}
\quad
\big\langle u,v\big\rangle_{\ell^2(\mathbb{Z}_h)}
= h\sum_{j=-\infty}^\infty u_j\overline{v_j}\qquad  \mbox{and} \qquad
\big\langle \hat{u},\hat{v}\big\rangle_{L^2(I_h)}
= \int_{-\pi/h}^{\pi/h} \!\hat{u}(\xi)\overline{\hat{v}(\xi)}\,\mathrm{d}\xi.
\end{equation}

\subsection{Background on semi-discrete Fourier analysis}

Semi-discrete Fourier transform is intimately connected to the widely used
Fourier transform and Fourier series, and can be motivated from the Fourier transform
\begin{equation}\label{eq:FourierTransform}
\hat{v}(\xi) = \int_{-\infty}^\infty e^{-i\xi x}v(x)\dr x.
\end{equation}
If the function $v$ is defined on the grid $\Zh$ of our interest here,
the natural modification of~\eqref{eq:FourierTransform}
is to replace the integral by its Trapezoidal rule
\begin{equation}\label{eq:semiFFTa}
\hat{v}(\xi) = \mathcal{F}[v](\xi)= h \sum_{j=-\infty}^\infty e^{-i\xi x_j}v_j,
\end{equation}
which is taken as the definition of our \emph{semi-discrete Fourier Transform}.
As a result, the transformed function $\hat{v}(\xi)$ is periodic
with period $2\pi/h$ and it is more convenient to restrict the spectral space
to be the interval $I_h=[-\pi/h, \pi/h]$ instead of the real line $\mathbb{R}$.
The Fourier transform~\eqref{eq:FourierTransform} can be recovered from~\eqref{eq:semiFFTa}
in the limit as $h$ goes to  zero, and more detailed
exposition about these transforms can be found
in~\cite[Chapter 2]{MR1776072}.

Once the semi-discrete Fourier Transform~\eqref{eq:semiFFTa} is defined, its
inverse transform can also be readily worked out and takes the form
\begin{equation}\label{eq:semiFFTb}
{v}_j = \mathcal{F}^{-1}[\hat{v}](x_j)=
\dfrac{1}{2\pi}\int_{-\pi/h}^{\pi/h} \! e^{i\xi x_j}\hat{v}(\xi)\,\mathrm{d}\xi.
\end{equation}
The pair of transforms~\eqref{eq:semiFFTa} and~\eqref{eq:semiFFTb} are inverse
to each other,  which is equivalent to the  identity
(in the sense of distributions)
\begin{equation}\label{eq:invsemiFFT}
 \frac{h}{2\pi} \sum_{j=-\infty}^\infty
e^{i\xi x_j} = \delta(\xi), \qquad \xi \in (-\pi/h,\pi/h),
\end{equation}
where $\delta(\xi)$ is the Dirac delta function. This pair  can also
be viewed as the reverse of Fourier series~\cite[Chapter 2]{MR1776072},
though here the grid size $h$  appears explicitly in both the physical
domain $\Zh$ and the spectral domain $I_h$.

We now state the well-known convolution theorem,
whose proof is standard in Fourier Analysis.

\begin{lem}[Convolution as a multiplier in the spectral space]\label{lem:cv}
 Let $v$, $w$ be functions in $\ell^2(\Zh)$ and
 their semi-discrete Fourier transform $\hat{v}$, $\hat{w}$ be functions in $L^2(I_h)$.
Define the operator $\mathcal{D}: \ell^2(\Zh) \to
\ell^\infty(\Zh)$ by
\[
(\mathcal{D} v)_j = h\sum_{k=-\infty}^\infty v_{j-k} w_{k},
\]
then the representation of $\mathcal{D}$ in spectral space is
a multiplication of $\hat{v}$ by  $\hat{w}$, that is,
\[
\widehat{\mathcal{D}v}(\xi) = \hat{w}(\xi) \hat{v}(\xi).
\]
\end{lem}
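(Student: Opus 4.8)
The plan is to unwind the definition of the semi-discrete Fourier transform~\eqref{eq:semiFFTa} applied to the convolution $\mathcal{D}v$ and to reorganise the resulting double sum so that it splits into a product. First I would substitute $(\mathcal{D}v)_j = h\sum_k v_{j-k}w_k$ into~\eqref{eq:semiFFTa} to obtain
\[
\widehat{\mathcal{D}v}(\xi) = h\sum_{j=-\infty}^\infty e^{-i\xi x_j}\,h\sum_{k=-\infty}^\infty v_{j-k}w_k .
\]
The algebraic key is the additivity $x_j = x_{j-k}+x_k$ of the grid points $x_j=jh$, which factors the kernel as $e^{-i\xi x_j}=e^{-i\xi x_{j-k}}\,e^{-i\xi x_k}$. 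After interchanging the two summations and substituting $m=j-k$ (with $m$ ranging over all of $\mathbb{Z}$ for each fixed $k$), the sum over $m$ reproduces $\hat v(\xi)$ and the sum over $k$ reproduces $\hat w(\xi)$, so that
\[
\widehat{\mathcal{D}v}(\xi) = \Big(h\sum_{k} e^{-i\xi x_k}w_k\Big)\Big(h\sum_{m} e^{-i\xi x_m}v_m\Big) = \hat w(\xi)\,\hat v(\xi).
\]
Along the way I would keep careful track of the factors of $h$: one arises from the definition of $\mathcal{D}$ and one from each use of~\eqref{eq:semiFFTa}, and these recombine into precisely the two factors of $h$ built into $\hat v$ and $\hat w$.

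The step I expect to be the main obstacle is justifying the interchange of the two infinite summations, since for $v,w\in\ell^2(\Zh)$ the double series $\sum_j\sum_k |v_{j-k}|\,|w_k|$ is generally not absolutely convergent: Cauchy--Schwarz bounds the inner sum over $k$ uniformly in $j$, but yields no decay in $j$. I would therefore first prove the identity for finitely supported sequences $v$ and $w$, where the sums are finite, Fubini is trivial, and the relabelling above is rigorous; on this class one also checks directly that the product $\hat w\hat v$ lies in $L^2(I_h)$, so both sides are genuine $L^2$ functions. I would then extend to general $v,w\in\ell^2(\Zh)$ by density of the finitely supported sequences together with the Plancherel isometry between $\ell^2(\Zh)$ and $L^2(I_h)$ encoded by~\eqref{eq:invsemiFFT}.

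One caveat worth flagging is that for arbitrary $\ell^2$ data the product $\hat w\hat v$ is only guaranteed to lie in $L^1(I_h)$ rather than $L^2(I_h)$, consistent with $\mathcal{D}$ mapping into $\ell^\infty(\Zh)$ rather than $\ell^2(\Zh)$; the conclusion is then best read as an identity for the (almost everywhere defined) transform, which is exactly the level of generality implicit in the lemma's phrasing. This is the sense in which the result is \emph{standard}: beyond the exponential factorisation and relabelling, no ideas are needed other than a routine density argument.
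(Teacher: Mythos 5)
Your proof is correct. Note that the paper offers no argument for this lemma at all---it is stated as ``well-known'' with the proof declared standard---so there is no internal proof to compare against; what you have written is precisely the standard argument the authors are invoking: factor $e^{-i\xi x_j}=e^{-i\xi x_{j-k}}e^{-i\xi x_k}$, interchange the sums, and relabel, with the two factors of $h$ landing exactly where the definitions \eqref{eq:semiFFTa} of $\hat v$ and $\hat w$ need them. Your treatment of the analytic subtleties is also sound and goes beyond what the paper records: for general $v,w\in\ell^2(\Zh)$ the double series is indeed not absolutely summable, so proving the identity first for finitely supported sequences and then passing to the limit---with $\hat w\hat v$ read as an $L^1(I_h)$ function and the conclusion interpreted through the inversion integral \eqref{eq:semiFFTb}---is the right repair, and it is consistent with $\mathcal{D}$ mapping $\ell^2(\Zh)$ only into $\ell^\infty(\Zh)$. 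One marginally cleaner alternative, which avoids both the interchange of infinite sums and the density step: define $p_j=\frac{1}{2\pi}\int_{-\pi/h}^{\pi/h}e^{i\xi x_j}\hat w(\xi)\hat v(\xi)\,\mathrm{d}\xi$, expand $\hat v$ by its defining series, and use the $L^2$-continuity of the pairing (Parseval, Lemma~\ref{lem:parseval}) to integrate term by term; the inversion formula applied to $w$ then yields $p_j=(\mathcal{D}v)_j$ for all $\ell^2$ data in one stroke.
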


Before establishing the connection between the weights and the symbol, we make the following observation.
Since the precise value of $w_0$ in~\eqref{FLh} is irrelevant (its coefficient
is identically zero), without loss of generality in this paper we set
\begin{equation}\label{w0}
w_0 = -\sum_{k\neq 0}w_k.
\end{equation}
Using this convention, the operator~\eqref{FLh} becomes a discrete convolution
\begin{equation}
\label{FLh1}\tag{FLh'}
   (-\Delta_h)^{\alpha/2}u_j = - \sum_{j=-\infty}^\infty u_{j-k}w_k.
\end{equation}

\subsection{Spectral representation of the
  schemes~\eqref{FLh}}\label{sec:specrep}

Next we obtain the representation of the general scheme~\eqref{FLh} in the
spectral space, as a direct application of Lemma~\ref{lem:cv}.

\begin{lem} Let $u$ and $w$ be functions in $\ell^2(\Zh)$,
    then the general scheme~\eqref{FLh} is a multiplication
    in spectral space, that is
\begin{equation}\label{Duhat}
    \widehat{(-\Delta_h)^{\alpha/2} u}(\xi)=M_h(\xi)\hat{u}(\xi),
\end{equation}
with the symbol $M_h(\xi)=- \hat{w}(\xi)/h$.
\end{lem}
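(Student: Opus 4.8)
The plan is to obtain the symbol as an immediate consequence of the convolution theorem, Lemma~\ref{lem:cv}, once the scheme~\eqref{FLh} has been rewritten as a pure discrete convolution. The whole argument is essentially linear bookkeeping: the operator $\FLh$ differs from the convolution operator $\mathcal{D}$ of Lemma~\ref{lem:cv} only by the scalar factor $-1/h$, and since the semi-discrete Fourier transform is linear, the multiplier structure is inherited directly.

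Concretely, I would proceed in three steps. First, I would invoke the normalisation convention~\eqref{w0}, $w_0 = -\sum_{k\neq 0}w_k$, which forces $\sum_{k\in\mathbb{Z}} w_k = 0$; splitting the defining sum as $\sum_k(u_j - u_{j-k})w_k = u_j\sum_k w_k - \sum_k u_{j-k}w_k$, the first term vanishes and the scheme collapses to the convolution form~\eqref{FLh1}, namely $\FLh u_j = -\sum_k u_{j-k}w_k$. Second, I would recognise this as $-1/h$ times the operator of Lemma~\ref{lem:cv}, since $(\mathcal{D}u)_j = h\sum_k u_{j-k}w_k$ gives $\FLh u_j = -\tfrac{1}{h}(\mathcal{D}u)_j$. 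Third, applying the semi-discrete Fourier transform~\eqref{eq:semiFFTa} and using its linearity together with Lemma~\ref{lem:cv},
\[
\widehat{\FLh u}(\xi) = -\tfrac{1}{h}\,\widehat{\mathcal{D}u}(\xi) = -\tfrac{1}{h}\,\hat{w}(\xi)\hat{u}(\xi),
\]
which is exactly~\eqref{Duhat} with $M_h(\xi) = -\hat{w}(\xi)/h$.

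The only genuine subtlety, and hence the step I expect to require the most care, is the justification of the term-by-term splitting of the series and of the interchange of summation with the Fourier integral. For $u,w\in\ell^2(\Zh)$ the convolution $\sum_k u_{j-k}w_k$ converges absolutely by Cauchy--Schwarz, so the convolution form is well defined pointwise and its transform $\hat{w}\hat{u}$ lies in $L^1(I_h)$; but the bare sum $\sum_k w_k$ need not converge for a generic $\ell^2$ sequence, so the splitting that produces~\eqref{FLh1} tacitly relies on the summability (or at least the convergence of the symmetric partial sums $\sum_{|k|\le N} w_k$) that the convention~\eqref{w0} already presupposes. I would therefore make this hypothesis explicit, after which the rearrangements and the appeal to Lemma~\ref{lem:cv} are routine.
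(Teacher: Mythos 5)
Your proof is correct and follows exactly the paper's route: the paper likewise invokes the convention~\eqref{w0} to rewrite the scheme in the convolution form~\eqref{FLh1} and then obtains the symbol $M_h(\xi)=-\hat{w}(\xi)/h$ as a direct application of Lemma~\ref{lem:cv}. Your added remark on the summability of $\sum_k w_k$ implicit in~\eqref{w0} is a reasonable point of extra care, but it does not change the argument.
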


The structure of the symbol $M_h(\xi)$ can be further simplified by
isolating the grid size $h$.
Since $(-\Delta)^{\alpha/2}$ is a spatial derivative of order $\alpha$,
it is reasonable to assume that the only dependence of the weights
$\{w_k\}_{k={-\infty}}^{\infty}$
on $h$ is the factor $h^{-\alpha}$.  Stated differently, $h^\alpha w_k$
does not depend on $h$. As a result, it is more convenient
to consider the following rescaled, $h$-independent symbol
\begin{equation}\label{eq:wtoM}
 M(\xi) = h^{\alpha}M_h(\xi/h)
=-h^{\alpha}\sum_{k=-\infty}^\infty w_k e^{-ik\xi},
\end{equation}
which is defined on the fixed interval $[-\pi,\pi]$. We will focus on
the rescaled symbol $M(\xi)$ instead of $M_h(\xi)$ in the rest of the paper.

On the other hand, if  $M(\xi)$ (or equivalently $M_h$) is known,
the corresponding weights $\{w_k\}_{k={-\infty}}^{\infty}$ can be obtained
using the inverse transform~\eqref{eq:semiFFTb}, that is,
\begin{equation}\label{eq:SpectralWeight}
\qquad w_k =
-\frac{h}{2\pi}\int_{-\pi/h}^{\pi/h}\! M_h(\xi) e^{i\xi x_k}\,\mathrm{d}\xi
=-\frac{h^{-\alpha}}{\pi}\!
\int_0^\pi M(\xi) \cos ( k\xi)  \,\mathrm{d}\xi, \quad
k \neq 0,
\end{equation}
where the symmetry $M(-\xi)=M(\xi)$, as a result of the fact $w_k=w_{-k}$, is used.
Therefore, one can develop finite difference schemes of the form~\eqref{FLh}
by proposing appropriate symbols $M(\xi)$, provided that
the oscillatory integral~\eqref{eq:SpectralWeight} defining the weights can
be evaluated accurately, by either reducing to closed form expression or using numerical quadrature.

\begin{rem} If $M(\xi)$ is continuous at the origin and $M(0)=0$, then $w_0$
calculated from~\eqref{eq:SpectralWeight} with $k=0$
satisfies the convention $w_0=-\sum_{k\neq 0}w_k$
automatically. This fact can be verified easily
using the identity~\eqref{eq:invsemiFFT}, since
\[
 \sum_{k=-\infty}^\infty w_k =-\frac{h}{2\pi}\int_{-\pi/h}^{\pi/h}
{M}_h(\xi) \left(\sum_{k=-\infty}^\infty e^{i\xi x_k}
\right)\,\mathrm{d}\xi
=-\int_{-\pi/h}^{\pi/h} {M}_h(\xi)\delta(\xi)\,\mathrm{d}\xi=0.
\]
\end{rem}
Once the one-to-one correspondence
between the weights and the symbol as illustrated in
~\eqref{eq:wtoM} and~\eqref{eq:SpectralWeight} is clear,
we can examine the (rescaled) symbols of existing schemes on one hand and
propose new schemes from the symbols on the other hand.
Before going into these details, we show that many discrete identities
and inequalities can be established as a direct consequence of the special
form the scheme~\eqref{FLh}, which is often independent of the particular
choices of the weights (or the symbols).

\section{Discrete identities and inequalities}\label{sec:ind}

Because of the similar structure between the singular integral~\eqref{eq:rieszfrac}
and the discrete scheme~\eqref{FLh}, many important identities and
inequalities of the continuous operator have their
discrete counterparts. These identities are valid
under mild conditions on the decay of the discrete functions involved,
while the inequalities usually require non-negativity of the
weights $w_k$ with $k \neq 0$.

By the definitions of the inner product~\eqref{eq:innerprod} and the
pair of transforms~\eqref{eq:semiFFTa}
and~\eqref{eq:semiFFTb}, the following Parseval's identity holds.
\begin{lem}[Parseval's identity] Let $u$ and $v$ be two functions
  in $\ell^2(\mathbb{Z}_h)$ and $\hat{u},\hat{v} \in L^2(I_h)$ be
  their semi-discrete Fourier transforms. Then
\[
\langle u, v\rangle_{\ell^2(\mathbb{Z}_h)} = \frac{1}{2\pi}
\langle \hat{u},\hat{v}\rangle_{L^2(I_h)},
\]
and in particular $\|u\|_{\ell^2(\mathbb{Z}_h)}
= \frac{1}{\sqrt{2\pi}}\|\hat{u}\|_{L^2(I_h)}$.
\label{lem:parseval}
\end{lem}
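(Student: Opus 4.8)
The plan is to derive the identity directly from the definitions of the two inner products in~\eqref{eq:innerprod} together with the inversion formula~\eqref{eq:semiFFTb}, rather than invoking an abstract isometry theorem. Starting from the left-hand side, I would write
\[
\langle u, v\rangle_{\ell^2(\mathbb{Z}_h)} = h\sum_{j=-\infty}^\infty u_j\,\overline{v_j},
\]
and then replace each factor $\overline{v_j}$ by the complex conjugate of its inverse transform,
\[
\overline{v_j} = \frac{1}{2\pi}\int_{-\pi/h}^{\pi/h} e^{-i\xi x_j}\,\overline{\hat{v}(\xi)}\,\mathrm{d}\xi,
\]
which follows from~\eqref{eq:semiFFTb} upon conjugating (the exponential kernel picks up a sign, and $x_j=jh$ is real).

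The key step is then to interchange the sum over $j$ with the integral over $\xi$, which collects the grid exponentials back into the transform of $u$:
\[
\langle u, v\rangle_{\ell^2(\mathbb{Z}_h)}
= \frac{1}{2\pi}\int_{-\pi/h}^{\pi/h}\Big( h\sum_{j=-\infty}^\infty u_j\, e^{-i\xi x_j}\Big)\overline{\hat{v}(\xi)}\,\mathrm{d}\xi
= \frac{1}{2\pi}\int_{-\pi/h}^{\pi/h} \hat{u}(\xi)\,\overline{\hat{v}(\xi)}\,\mathrm{d}\xi,
\]
since the inner bracketed sum is exactly $\hat{u}(\xi)$ by the definition~\eqref{eq:semiFFTa}. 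The right-hand side is precisely $\frac{1}{2\pi}\langle \hat{u}, \hat{v}\rangle_{L^2(I_h)}$, which is the claimed identity. Specialising to $v=u$ gives $\|u\|_{\ell^2(\mathbb{Z}_h)}^2 = \frac{1}{2\pi}\|\hat{u}\|_{L^2(I_h)}^2$, and taking square roots yields the stated norm relation.

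The main obstacle is justifying the interchange of summation and integration: since $u,v$ lie in $\ell^2(\mathbb{Z}_h)$ rather than $\ell^1(\mathbb{Z}_h)$, the terms need not be absolutely summable uniformly in $\xi$, so a direct appeal to Fubini is not immediate. I would handle this by a standard density argument. First I establish the identity for finitely supported sequences, where the sum is finite and every manipulation is elementary; in that case one may alternatively verify it by expanding both transforms and using the orthogonality relation $\int_{-\pi/h}^{\pi/h} e^{i\xi h(k-j)}\,\mathrm{d}\xi = \frac{2\pi}{h}\,\delta_{jk}$, the reciprocal form of~\eqref{eq:invsemiFFT}. I would then extend to all of $\ell^2(\mathbb{Z}_h)$ by continuity, using that finitely supported sequences are dense in $\ell^2(\mathbb{Z}_h)$, that both inner products are continuous, and that the transform $v\mapsto\hat{v}$ is bounded from $\ell^2(\mathbb{Z}_h)$ into $L^2(I_h)$ (which the identity itself, once proven on the dense subspace, promotes to an isometry up to the constant $2\pi$).
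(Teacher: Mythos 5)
Your proof is correct and takes the route the paper itself gestures at: the paper gives no actual proof, stating only that the identity ``holds by the definitions of the inner product~\eqref{eq:innerprod} and the pair of transforms~\eqref{eq:semiFFTa} and~\eqref{eq:semiFFTb}.'' Your computation --- substituting the inversion formula, interchanging sum and integral to reassemble $\hat{u}(\xi)$, and justifying the interchange via the orthogonality relation on finitely supported sequences followed by a density/continuity extension to all of $\ell^2(\mathbb{Z}_h)$ --- is precisely the standard argument the paper leaves implicit, with the Fubini issue handled correctly.
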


Similar to the continuous fractional Laplacian, the discrete operator
$(-\Delta_h)^{\alpha/2}$ is also self-adjoint.
\begin{lem} Let $u$ and
$v$ be two functions in $\ell^2(\Zh)$  such that $u_{j}$ and $v_{j}$
decay to zero fast enough as $|j|\to\infty$, then
\[
\big\langle (-\Delta_h)^{\alpha/2} u,v\big\rangle_{\ell^2(\Zh)} = \big\langle u,
(-\Delta_h)^{\alpha/2}v\big\rangle_{\ell^2(\Zh)}.
\]
\end{lem}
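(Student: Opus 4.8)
The plan is to reduce the statement to the spectral side, where self-adjointness becomes the observation that the symbol $M(\xi)$ is real-valued. First I would apply the spectral representation~\eqref{Duhat}, which gives $\widehat{(-\Delta_h)^{\alpha/2}u}(\xi) = M_h(\xi)\hat u(\xi)$ and likewise $\widehat{(-\Delta_h)^{\alpha/2}v}(\xi) = M_h(\xi)\hat v(\xi)$. Then, invoking Parseval's identity (Lemma~\ref{lem:parseval}), I would rewrite both inner products in $\ell^2(\Zh)$ as inner products in $L^2(I_h)$:
\[
\big\langle (-\Delta_h)^{\alpha/2}u,\,v\big\rangle_{\ell^2(\Zh)}
= \frac{1}{2\pi}\int_{-\pi/h}^{\pi/h} M_h(\xi)\,\hat u(\xi)\,\overline{\hat v(\xi)}\,\mathrm{d}\xi,
\]
and symmetrically, pulling the conjugate onto the second factor,
\[
\big\langle u,\,(-\Delta_h)^{\alpha/2}v\big\rangle_{\ell^2(\Zh)}
= \frac{1}{2\pi}\int_{-\pi/h}^{\pi/h} \hat u(\xi)\,\overline{M_h(\xi)\,\hat v(\xi)}\,\mathrm{d}\xi
= \frac{1}{2\pi}\int_{-\pi/h}^{\pi/h} \overline{M_h(\xi)}\,\hat u(\xi)\,\overline{\hat v(\xi)}\,\mathrm{d}\xi.
\]
Comparing the two displays, the claimed equality holds as soon as $M_h(\xi)=\overline{M_h(\xi)}$, i.e. the symbol is real for each $\xi$.

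The reality of the symbol is where the symmetry hypothesis $w_k=w_{-k}$ enters, and I expect this to be the only substantive point. From the definition $M_h(\xi)=-\hat w(\xi)/h=-\sum_k w_k e^{-i\xi x_k}$, grouping the $k$ and $-k$ terms and using $w_k=w_{-k}$ collapses the exponentials into cosines, giving $M_h(\xi) = -\tfrac{1}{h}\big(w_0 + \sum_{k\neq 0} w_k(e^{-i\xi x_k}+e^{i\xi x_k})\big) = -\tfrac{1}{h}\big(w_0 + 2\sum_{k>0} w_k\cos(\xi x_k)\big)$, which is manifestly real. This is exactly the same symmetry already exploited in~\eqref{eq:SpectralWeight}.

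The hard part is not the algebra but the analytic bookkeeping: one must ensure that all the sums and integrals converge and that Parseval's identity legitimately applies to $(-\Delta_h)^{\alpha/2}u$, not merely to $u$ itself. This is precisely what the hypothesis that $u_j,v_j$ ``decay to zero fast enough as $|j|\to\infty$'' is there to guarantee — fast decay of the grid functions, together with the tail behaviour of the weights, places $(-\Delta_h)^{\alpha/2}u$ and $(-\Delta_h)^{\alpha/2}v$ in $\ell^2(\Zh)$ with transforms in $L^2(I_h)$, so that the transformed integrands are absolutely integrable on $I_h$ and the interchange of summation and integration used in Lemma~\ref{lem:cv} and Lemma~\ref{lem:parseval} is justified. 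Granting that, the proof is a two-line spectral computation. An alternative, purely physical-space route would expand both inner products as double sums $h^2\sum_{j,k} w_k(\cdots)$ and reindex to match them directly using $w_k=w_{-k}$; this avoids the Fourier machinery but requires the same decay assumptions to justify rearranging the doubly infinite sum, so I would favour the spectral argument as the cleaner presentation.
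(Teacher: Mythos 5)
Your proof is correct, but it follows a genuinely different route from the paper's. The paper proves the lemma entirely in physical space: writing the operator in the convolution form~\eqref{FLh1}, one has
\[
\big\langle (-\Delta_h)^{\alpha/2}u,v\big\rangle_{\ell^2(\Zh)}
= -h\sum_{j}\sum_{k} u_{j-k}\,w_k\,\overline{v_j}
= -h\sum_{m}\sum_{k} u_m\,w_{k}\,\overline{v_{m+k}}
= h\sum_{m} u_m\,\overline{\Big(-\sum_{k} w_k v_{m-k}\Big)}
= \big\langle u,(-\Delta_h)^{\alpha/2}v\big\rangle_{\ell^2(\Zh)},
\]
where the middle steps substitute $m=j-k$, then replace $k$ by $-k$ using $w_{-k}=w_k$ and the reality of the weights --- this is precisely the ``alternative, purely physical-space route'' you sketch and set aside in your closing paragraph. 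Your main argument instead works on the spectral side, combining the representation~\eqref{Duhat} with Parseval (Lemma~\ref{lem:parseval}) and reducing self-adjointness to the reality of the symbol $M_h$. Both proofs hinge on the same fact (the weights are real and even), and both need the decay hypothesis: yours to ensure $(-\Delta_h)^{\alpha/2}u$ and $(-\Delta_h)^{\alpha/2}v$ lie in $\ell^2(\Zh)$ with transforms in $L^2(I_h)$ so that Lemma~\ref{lem:cv} and Parseval apply, the paper's to justify rearranging the doubly infinite sum. What your route buys is conceptual clarity and reusability --- it identifies self-adjointness with $M_h=\overline{M_h}$, which generalizes at once to any real even multiplier and ties in naturally with the energy expression~\eqref{eq:eint}; what it costs is importing the Fourier machinery, which requires the additional (true, but unstated) checks that $w\in\ell^2(\Zh)$ and that $M_h$ is bounded, so that multiplication by $M_h$ preserves $L^2(I_h)$ --- both hold here since $\sum_k |w_k|<\infty$ for all the weights considered. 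One trivial slip: in your final display for $M_h(\xi)$ a spurious factor $1/h$ appears, inconsistent with the line preceding it; since $1/h$ is real this does not affect the reality argument, but it should be removed.
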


This lemma can be proved easily using the equivalent definition~\eqref{FLh1} and
the fact that the weights $w_k$ are real and symmetric. The inner product
in this lemma also suggests the \emph{discrete energy}
$\mathcal{E}[u] = \frac{1}{2}
\big\langle (-\Delta_h)^{\alpha/2} u,u\big\rangle_{\ell^2(\Zh)}$, that is,
\begin{equation}\label{eq:esum}
  \mathcal{E}[u]= \frac{h}{4} \sum_{j=-\infty}^\infty \sum_{k=-\infty}^\infty
|u_j-u_{j-k}|^2w_k,
\end{equation}
or equivalently
\begin{equation}\label{eq:eint}
 \mathcal{E}[u] = \frac{1}{4\pi}\int_{-\pi/h}^{\pi/h}
 M_h(\xi)|\hat{u}(\xi)|^2 \dr\xi.
\end{equation}
Therefore, from the expressions~\eqref{eq:esum} and~\eqref{eq:eint},
the energy is non-negative under certain conditions, as summarized in the following lemma.

\begin{lem} If either the symbol $M(\xi)$ or
  the weights $w_k$ ($k\neq 0$) are
  non-negative, then the energy $\mathcal{E}[u]$ is non-negative.
\end{lem}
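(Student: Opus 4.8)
The plan is to read both conclusions straight off the two equivalent formulas for the energy that are already in hand, namely the physical-space double sum \eqref{eq:esum} and the spectral integral \eqref{eq:eint}. Since the hypothesis is stated as an ``either/or'', I would split the argument into two cases, matching each assumption to whichever representation makes its sign manifest.

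First, suppose the weights $w_k$ with $k \neq 0$ are non-negative. Here I would work entirely from \eqref{eq:esum}. The $k=0$ contribution to the double sum drops out because $|u_j - u_j|^2 = 0$, so the (arbitrary) value of $w_0$ is irrelevant. For each $k \neq 0$, the summand $|u_j - u_{j-k}|^2\,w_k$ is the product of a non-negative square and a non-negative weight, hence non-negative; summing non-negative terms and scaling by the positive factor $h/4$ preserves the sign, so $\mathcal{E}[u] \geq 0$.

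Second, suppose the rescaled symbol $M(\xi)$ is non-negative on $[-\pi,\pi]$. Now I would use \eqref{eq:eint}, which is expressed through $M_h$ rather than $M$, so the one small bookkeeping step is to transfer the hypothesis from $M$ to $M_h$ on the interval $I_h$. From the rescaling \eqref{eq:wtoM}, $M(\xi)=h^{\alpha}M_h(\xi/h)$, equivalently $M_h(\xi)=h^{-\alpha}M(h\xi)$; since $h^{-\alpha}>0$ and $h\xi$ sweeps out $[-\pi,\pi]$ as $\xi$ ranges over $I_h$, non-negativity of $M$ forces $M_h \geq 0$ on $I_h$. The integrand $M_h(\xi)\,|\hat{u}(\xi)|^2$ is then non-negative pointwise, and integrating over $I_h$ (with the positive prefactor $1/(4\pi)$) yields $\mathcal{E}[u]\geq 0$.

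Because both energy representations are already established, I expect no genuine obstacle; the statement is essentially a sign inspection. The only points requiring a moment's care are the harmless vanishing of the $k=0$ term in the first case and the change of variable relating $M$ to $M_h$ in the second, which is exactly where the positivity of $h^{-\alpha}$ and the matching of the intervals $[-\pi,\pi]$ and $I_h$ come into play.
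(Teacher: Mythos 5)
Your proof is correct and follows exactly the paper's (implicit) argument: the paper simply asserts that non-negativity follows from the two energy representations \eqref{eq:esum} and \eqref{eq:eint}, which is precisely the sign inspection you carry out. Your added care about the vanishing $k=0$ term and the rescaling relation $M_h(\xi)=h^{-\alpha}M(h\xi)$ only makes explicit what the paper leaves to the reader.
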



The following inequalities are useful in various estimates
for the analysis of differential equation where the fractional Laplacian
is discretised with the scheme~\eqref{FLh}, motivated from
their continuous counterparts initiated in the study of the
quasi-geostrophic equation by C{\'o}rdoba and
C{\'o}rdoba~\cite{MR2032097,MR2084005} and then generalized by Ju~\cite{MR2123380}.
\begin{lem} Let $\alpha \in (0,2)$, $p>1$ and $u$, $v$ be functions defined
  on the grid $\mathbb{Z}_h$ such that $v_j=|u_j|^{p}$. If
  the weights $\{w_k\}_{k={-\infty}}^{\infty}$ are non-negative for
  $k\neq 0$, the following point estimate holds
\[
|u_j|^{p-2}u_j(-\Delta_h)^{\alpha/2} u_j \geq \frac{1}{p} (-\Delta_h)^{\alpha/2}
v_j.
\]
\end{lem}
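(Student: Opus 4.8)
The goal is to prove the pointwise inequality
\[
|u_j|^{p-2}u_j\,(-\Delta_h)^{\alpha/2} u_j \geq \frac{1}{p} (-\Delta_h)^{\alpha/2} v_j,
\qquad v_j = |u_j|^p,
\]
for $p>1$ and nonnegative weights $w_k$ ($k\neq 0$). The plan is to expand both sides using the defining form~\eqref{FLh}, reduce the inequality to a summand-by-summand comparison, and then invoke a single convexity estimate that holds for each index $k$.

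I would first write out both sides explicitly. Using~\eqref{FLh},
\[
(-\Delta_h)^{\alpha/2} u_j = \sum_{k} (u_j - u_{j-k}) w_k,
\qquad
(-\Delta_h)^{\alpha/2} v_j = \sum_{k} (|u_j|^p - |u_{j-k}|^p) w_k.
\]
Multiplying the first by $|u_j|^{p-2}u_j$ and subtracting $\frac1p$ times the second, the claim becomes
\[
\sum_{k} w_k\Big[\,|u_j|^{p-2}u_j\,(u_j - u_{j-k}) - \tfrac{1}{p}\big(|u_j|^p - |u_{j-k}|^p\big)\Big] \geq 0.
\]
Since each $w_k\geq 0$ for $k\neq 0$ (and the $k=0$ term vanishes identically), it suffices to show that the bracketed quantity is nonnegative for every fixed $k$. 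Writing $a=u_j$ and $b=u_{j-k}$, this reduces to the scalar inequality
\[
|a|^{p-2}a\,(a-b) - \tfrac{1}{p}\big(|a|^p - |b|^p\big) \geq 0
\qquad\text{for all } a,b\in\mathbb{R}.
\]

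The core of the argument is therefore this one-variable convexity fact, which is the discrete analogue of the Córdoba–Córdoba pointwise estimate. I would prove it by noting that the function $\phi(t)=|t|^p$ is convex and differentiable on $\mathbb{R}$ (using $p>1$) with derivative $\phi'(a)=p|a|^{p-2}a$. Convexity gives the tangent-line inequality $\phi(b)\geq \phi(a)+\phi'(a)(b-a)$, i.e. $|b|^p \geq |a|^p + p|a|^{p-2}a\,(b-a)$. Rearranging yields exactly $|a|^{p-2}a\,(a-b)\geq \frac1p(|a|^p-|b|^p)$, which is the required bracketed inequality. Summing against the nonnegative weights $w_k$ and restoring the indices then completes the proof.

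The main obstacle is not any single computation but rather ensuring that the term-by-term reduction is legitimate: one must justify that the infinite sums defining $(-\Delta_h)^{\alpha/2} u_j$ and $(-\Delta_h)^{\alpha/2} v_j$ converge (so that the linear combination can be split and rearranged), which is why the statement presumes sufficient decay of $u$ and the mild summability of the weights implicit in~\eqref{FLh}. Once convergence is granted, every summand is individually nonnegative, so no cancellation or delicate estimation is needed and the inequality follows immediately. A minor point worth a remark is the differentiability of $|t|^p$ at $t=0$, which holds precisely because $p>1$; this is exactly where the hypothesis $p>1$ enters.
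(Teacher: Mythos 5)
Your proposal is correct, and its overall structure is identical to the paper's proof: expand both sides via~\eqref{FLh}, note that by nonnegativity of $w_k$ ($k\neq 0$) it suffices to verify the scalar inequality
\[
|a|^{p-2}a\,(a-b) \;\geq\; \tfrac{1}{p}\big(|a|^{p}-|b|^{p}\big)
\]
for each index $k$, and then sum. The only genuine difference is how this scalar inequality is established. The paper obtains it from Young's inequality,
\[
\frac{p-1}{p}|a|^{p}+\frac{1}{p}|b|^{p} \;\geq\; |a|^{p-1}|b| \;\geq\; |a|^{p-2}ab,
\]
whereas you invoke the tangent-line inequality $\phi(b)\geq \phi(a)+\phi'(a)(b-a)$ for the convex, differentiable function $\phi(t)=|t|^{p}$. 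The two are essentially equivalent facts (Young's inequality with exponents $p$ and $p/(p-1)$ is itself a convexity statement), but your route is slightly more economical: it bypasses the intermediate sign estimate $|a|^{p-1}|b|\geq|a|^{p-2}ab$ and makes explicit exactly where the hypothesis $p>1$ enters, namely the differentiability of $|t|^{p}$ at the origin (equivalently, the vanishing of $|a|^{p-2}a$ at $a=0$, a point the paper's formulas use tacitly). Your closing remark on the convergence of the infinite sums, needed to split and recombine them, addresses a technicality the paper passes over in silence; it is implicitly covered by the standing assumption that both $u$ and its discrete fractional Laplacian are well defined on $\mathbb{Z}_h$.
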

\begin{proof}
 From Young's inequality, for any $u_j$ and $u_{j-k}$,
\[
 \frac{p-1}{p}|u_j|^{p}+\frac{1}{p}|u_{j-k}|^{p}
\geq |u_j|^{p-1} |u_{j-k}| \geq |u_j|^{p-2} u_j u_{j-k},
\]
which is equivalent to
\[
|u_j|^{p-2} u_j \big(u_j-u_{j-k}\big) \geq \frac{1}{p}\big( |u_{j}|^{p}-|u_{j-k}|^{p}\big)
= \frac{1}{p}\big( v_j-v_{j-k}\big) .
\]
Therefore, from the definition of the discrete operator,
\begin{multline*}
 |u_j|^{p-2}u_j(-\Delta_h)^{\alpha/2} u_j
= \sum_{k\neq 0} |u_j|^{p-2} u_j \big(u_j-u_{j-k}\big)w_k \\
\geq \frac{1}{p}\sum_{k\neq 0} \big( |u_{j}|^{p}-|u_{j-k}|^{p}\big)w_k
= \frac{1}{p}(-\Delta_h)^{\alpha/2}v_j.
\end{multline*}
\end{proof}

This lemma can be used to prove the discrete version of the Stroock-Varopoulos
inequality.
\begin{lem} Let $\alpha \in (0,2)$ and $p>2$. If both the function $u$ and
its discrete fractional Laplacian $(-\Delta_h)^{\alpha/2}u$ are defined on $\mathbb{Z}_h$,
\[
\big\langle |u|^{p-2}u,(-\Delta_h)^{\alpha/2}u\big\rangle_{\ell^2(\Zh)}  \geq \frac{2}{p}
\big\langle |u|^{p/2}, (-\Delta_h)^{\alpha/2}|u|^{p/2}\big\rangle_{\ell^2(\Zh)} .
\]
\end{lem}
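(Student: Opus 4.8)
The plan is to rewrite both inner products as symmetric bilinear (Dirichlet) forms, and then to reduce the whole inequality to a single elementary inequality at the level of individual bonds $(u_j,u_{j-k})$ — carrying the bond-wise Young's inequality idea of the previous lemma one step further.

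First I would establish the two-function version of the energy identity~\eqref{eq:esum}: for $f,g$ on $\Zh$ decaying fast enough,
\[
\big\langle f,\FLh g\big\rangle_{\ell^2(\Zh)} = \frac{h}{2}\sum_{j}\sum_{k}(f_j-f_{j-k})(g_j-g_{j-k})\,w_k .
\]
This follows by substituting the definition~\eqref{FLh}, using $w_k=w_{-k}$, the convention~\eqref{w0} (so $\sum_k w_k=0$, which kills the diagonal terms $f_ju_j$ and $f_{j-k}u_{j-k}$), and reindexing $j\mapsto j+k$, $k\mapsto-k$ to identify the two cross terms. The stated decay hypotheses guarantee absolute convergence and so legitimise these rearrangements.

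Next I apply this identity twice: with $f_j=|u_j|^{p-2}u_j$, $g=u$ on the left, and with $f=g=|u|^{p/2}$ on the right. Since $w_k\geq 0$ for $k\neq 0$ and the $k=0$ terms vanish on both sides, the claimed inequality follows by summing, term by term against the nonnegative weights, the single scalar inequality
\[
\big(|a|^{p-2}a-|b|^{p-2}b\big)(a-b)\ \geq\ \frac{2}{p}\big(|a|^{p/2}-|b|^{p/2}\big)^2, \qquad a,b\in\mathbb{R},\ p>2 .
\]

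The heart of the argument, and the step I expect to be the main obstacle, is this scalar inequality. For $0\le b<a$ I would write $a^{p/2}-b^{p/2}=\tfrac{p}{2}\int_b^a t^{p/2-1}\dr t$ and $a^{p-1}-b^{p-1}=(p-1)\int_b^a t^{p-2}\dr t$, then apply Cauchy--Schwarz to $\int_b^a t^{p/2-1}\cdot 1\,\dr t$ to obtain $(a^{p/2}-b^{p/2})^2\le \tfrac{p^2}{4(p-1)}(a^{p-1}-b^{p-1})(a-b)$, i.e.\ the sharp constant $\tfrac{4(p-1)}{p^2}$, which dominates $\tfrac{2}{p}$ precisely because $4(p-1)\ge 2p$ for $p\ge 2$ (with equality at $p=2$, which is why $p>2$ appears). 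The mixed-sign case $a>0>b$ reduces to the previous one: setting $A=|a|$, $B=|b|$, the left-hand side becomes $(A^{p-1}+B^{p-1})(A+B)$, which exceeds the same-sign quantity $(A^{p-1}-B^{p-1})(A-B)$ by the nonnegative amount $2(A^{p-1}B+AB^{p-1})$, while the right-hand side is unchanged; the cases where $a$ or $b$ vanishes follow by continuity. The only real subtleties are bookkeeping — confirming absolute convergence of the double sums so the symmetrisation is valid, and checking that the constant comparison genuinely invokes $p>2$ — after which everything is a direct consequence of the bond-wise reduction and the nonnegativity of the weights.
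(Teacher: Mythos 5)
Your proposal is correct, but it takes a genuinely different route from the paper's proof. The paper never symmetrises either inner product: it multiplies the Young inequality $|u_j|^{p/2-1}|u_{j-k}|\le \tfrac{p-2}{p}|u_j|^{p/2}+\tfrac{2}{p}|u_{j-k}|^{p/2}$ by $|u_j|^{p/2}$ to get the \emph{asymmetric} per-bond bound
\[
|u_j|^{p-2}u_j\big(u_j-u_{j-k}\big)\;\ge\;\frac{2}{p}\,|u_j|^{p/2}\big(|u_j|^{p/2}-|u_{j-k}|^{p/2}\big),
\]
and then simply sums this against the nonnegative weights $w_k$ and over $j$; both inner products appear exactly as written, with no reindexing of the double sum and no case analysis on signs (the absolute values do that work automatically). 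Your route instead polarises both sides into Dirichlet forms via the symmetrisation identity and reduces everything to the classical scalar Stroock--Varopoulos inequality, which you prove by Cauchy--Schwarz on integral representations. The trade-offs: the paper's argument is shorter and needs no absolute-convergence justification beyond what makes the inner products meaningful, precisely because it never reorders the sums; your argument pays for the symmetrisation (the rearrangement does require the decay you flag, together with $w_k=w_{-k}$ and the convention~\eqref{w0}) and for the sign bookkeeping --- note you should also dispose of the case $a,b<0$, which follows trivially from invariance under $(a,b)\mapsto(-a,-b)$ --- but it buys a strictly stronger conclusion, namely the sharp constant $4(p-1)/p^2\ge 2/p$ familiar from the continuous setting, and it makes explicit that the left-hand side is a genuine nonnegative bilinear form. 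Both proofs rely on nonnegativity of $w_k$ for $k\neq 0$, which the lemma inherits implicitly from the standing assumptions of Section~\ref{sec:ind}.
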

\begin{proof}
Similarly by Young's inequality,
\[
\frac{p-2}{p}|u_j|^{\frac{p}{2}}+\frac{2}{p}|u_{j-k}|^{\frac{p}{2}}
\geq |u_j|^{\frac{p}{2}-1}|u_{j-k}|
\geq |u_j|^{\frac{p}{2}-2}u_ju_{j-k},
\]
which can be rearranged as
\[
|u_j|^p-|u_j|^{p-2}u_ju_{j-k} \geq \frac{2}{p}|u_j|^{\frac{p}{2}}
\big(|u_j|^{\frac{p}{2}} - |u_{j-k}|^{\frac{p}{2}}\big).
\]
Therefore,
\begin{multline*}
 \big\langle |u|^{p-2}u,(-\Delta_h)^{\alpha/2}u\big\rangle_{\ell^2(\Zh)}
=h\sum_j \sum_{k\neq 0} |u_j|^{p-2}u_j(u_j-u_{j-k})w_k \\
\geq \frac{2h}{p}\sum_j \sum_{k\neq 0}
|u_j|^{\frac{p}{2}}\big(|u_j|^{\frac{p}{2}} - |u_{j-k}|^{\frac{p}{2}}\big)w_k
=\frac{2}{p}
\big\langle |u|^{p/2}, (-\Delta_h)^{\alpha/2}|u|^{p/2}\big\rangle_{\ell^2(\Zh)}.
\end{multline*}
\end{proof}

When the scheme~\eqref{FLh} is used to discretise the fractional Laplacian
operator in differential equations, above identities or inequalities
are critical to establish similar energy estimates and other qualitative properties
as in the continuous settings. If the original differential equation is derived
from variational principles, the discretised equation can also be obtained from
discrete variational principles involving the energy $\mathcal{E}$. Consequently,
the analogy between the continuous operator and its discretization~\eqref{FLh}
allows the development of a parallel theory or method in both settings. However,
we will focus on practical  numerical aspects below and leave
further investigations related to these discrete estimates as future work.

\section{Presentation of the weights}\label{sec:weights}
In this section we present several finite difference
approximations of the fractional Laplacian operator
of the form~\eqref{FLh}, including  schemes constructed
from explicit symbols like $|\xi|^{\alpha}$ or
$( 2- 2\cos\xi)^{\alpha/2}$, a variant of the \GL method associated with fractional derivatives,
and quadrature-difference methods based on the singular
integral~\eqref{eq:rieszfrac}. The focus of this section is
on the derivation of the weights or the associated symbols,
while other important properties are summarised and compared in the next section.

\begin{rem} The expressions of the weights $w_k$ for different schemes are usually
only  given for $k>0$, with the assumption that $w_{k}=w_{-k}$ for $k<0$ and
$w_0=-\sum_{k\neq 0}w_k$, although some formulas are still valid for
negative indices $k$. When the weights or the symbols are referred
for a particular method, superscripts are used to label and distinguish them.
For instance, $w_k^{SP}$ and $M^{SP}(\xi)$ are the weights and the associated
rescaled symbol for the spectrally accurate scheme discussed in the next subsection.
\end{rem}

\subsection{Spectral weights and sinc interpolation}

Spectral methods are widely used in scientific computing~\cite{MR1874071,MR2867779, MR1776072},
where the functions involved are expanded using basis defined on the whole computational domain.
Classical derivatives are usually evaluated with Fast Fourier Transform (FFT), and the error
decays exponentially fast for smooth functions, hence so call the \emph{spectral accuracy}.
For functions define on the whole space with fast decay at infinity, similar error properties
are preserved when the computational domain is truncated to be finite.

The fractional Laplacian operator, as a multiplier in the spectral space,
can be evaluated using FFT-based spectral methods, which is still spectrally accurate
for smooth \emph{periodic} functions. However, if the function $u$ is defined on the whole real line,
the nonlocality of the operator has a non-trivial effect: when the computational domain
is truncated to be $[-L,L]$ for some finite $L>0$, the error of the conventional Fourier
spectral method decays only algebraically (on the domain size $L$), even $u$
approaches zero fast at infinity. In fact, the result returned from FFT is
$\FL \tilde{u}(x)$ instead of $\FL u(x)$, where $\tilde{u}$ is the period
extension of $u$ from the interval $[-L,L]$ to the whole real line, i.e.,
\[
 \tilde{u}(x) = \sum_{j=-\infty}^\infty u(x-2jL).
\]
More precisely, if $u$ is essentially supported on the interval $[-L,L]$, the leading order error is
\[
 \FL\tilde{u}(x) - \FL u(x)=-C_{1,\alpha}\sum_{j\neq 0} \int_{\mathbb{R}}\!
 \frac{u(x-2jL)}{|x-y|^{1+\alpha}}\,\mathrm{d}y = O(L^{-1-\alpha}),
\]
for $x \in (-L,L)$.
This algebraically slow convergence is shown in Figure~\ref{fig:fft}(a) for $u(x)=e^{-x^2}$,
whose fractional Laplacian at the origin can be calculated explicitly as
\[
 (-\Delta)^{\alpha/2}u(0) = \frac{1}{\sqrt{\pi}}\int_0^\infty\! k^\alpha e^{-k^2/4}\,
 \mathrm{d}k = 2^\alpha\Gamma\Big(\frac{1+\alpha}{2}\Big)\big/\sqrt{\pi}.
\]
The leading $O(L^{-1-\alpha})$ error at the origin is also verified numerically
in Figure~\ref{fig:fft}(b).
\begin{figure}[htp]
 \begin{center}
 \includegraphics[totalheight=0.27\textheight]{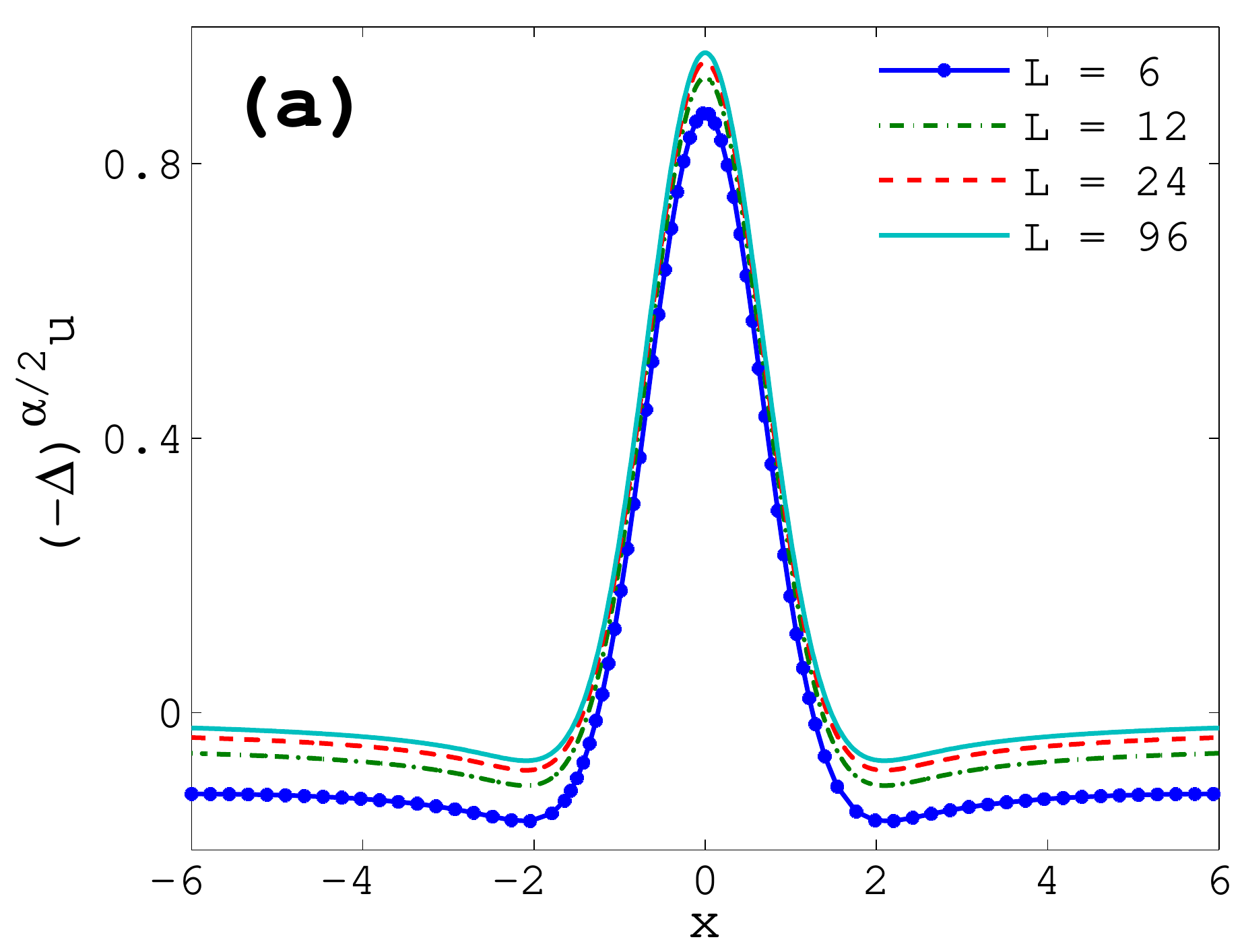}$~$
  \includegraphics[totalheight=0.27\textheight]{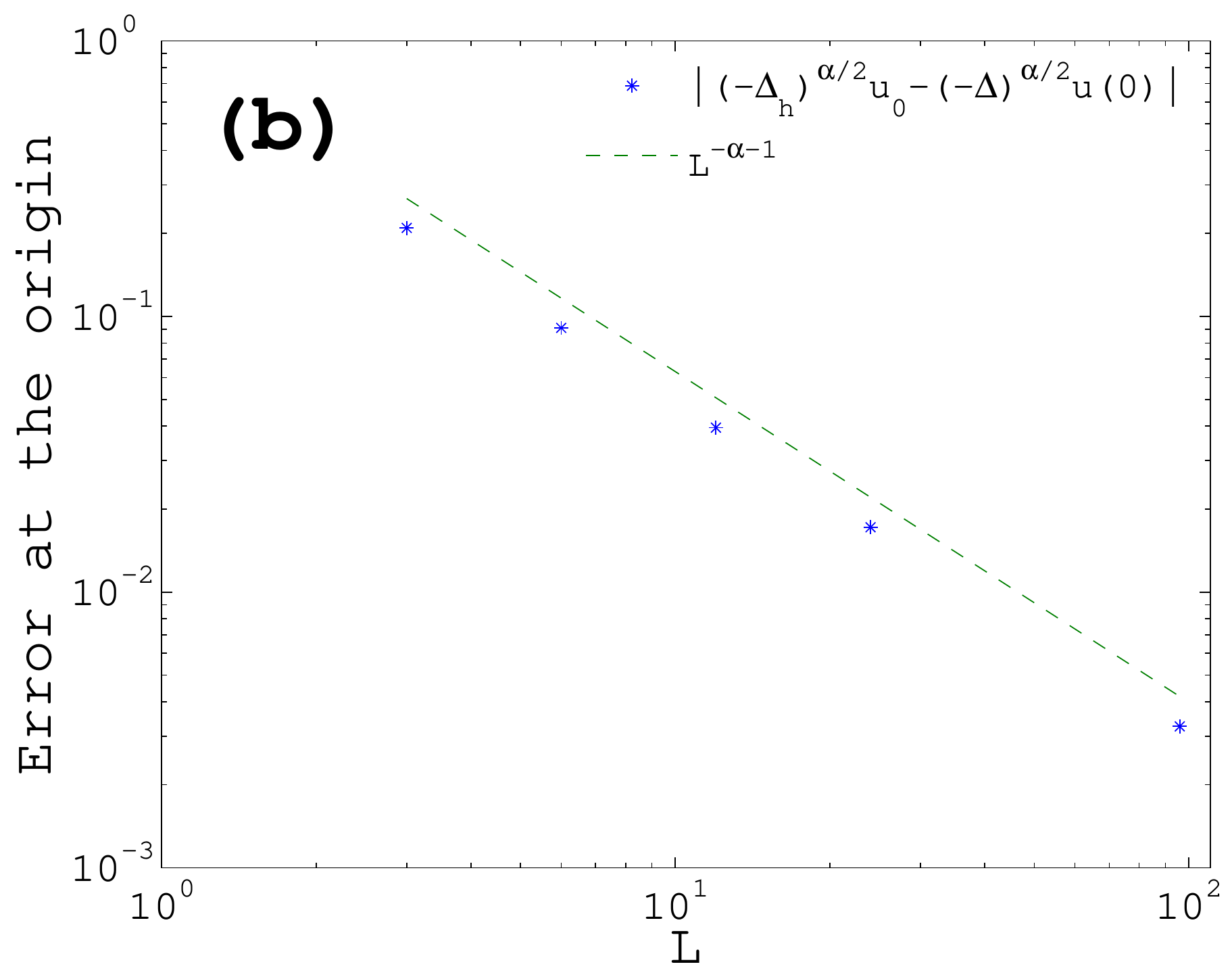}
 \end{center}
\caption{The slow convergence of the fractional Laplacian based on FFT: (a)
the fractional Laplacian of $u(x)=e^{-x^2}$ for $\al=0.2$ computed on different
domain sizes (but only shown on $[-6,6]$); (b) the error
at the origin that decays like $O(L^{-\alpha-1})$.}
\label{fig:fft}
\end{figure}

While the conventional Fourier spectral method is not satisfactory as expected,
the correct one in the form of~\eqref{FLh} can be constructed
by choosing the exact symbol $M(\xi)=|\xi|^\alpha$.
In this way, the weights defined via~\eqref{eq:SpectralWeight} become
\begin{equation}\label{eq:spweight}
w_k^{SP} = -\frac{h^{-\alpha}}{\pi}\int_0^\pi\! \xi^\alpha \cos (k\xi)\, \mathrm{d}\xi.
\end{equation}
Exactly the same weights $w_k^{SP}$ can be derived alternatively
from the well-known $\sinc$ interpolation $\mathcal{P}u$ of $u$,
when $(-\Delta)^{\alpha/2}u(x_j)$ is approximated by
$(-\Delta)^{\alpha/2}\mathcal{P}u(x_j)$. Here the $\sinc$ interpolation $\mathcal{P}u$ is
defined by
\[
\mathcal{P}u(x) = \sum_{k=-\infty}^\infty u_k \sinc \frac{x-x_k}{h},
\]
with the $\sinc$ function  $\sinc x = \frac{\sin \pi x}{\pi x}$ that has been explored
extensively for the numerical solutions of differential equations~\cite{MR1171217,MR1226236}.
The function $\mathcal{P}u$ is an interpolation of $u$ on $\mathbb{R}$ in the sense that
$\mathcal{P}u(x_j)=u_j$ for any $j \in \mathbb{Z}$. Once $\mathcal{P}u$ is defined,
the same weights~\eqref{eq:spweight} can be derived,
using the crucial property that  the Fourier transform of $\sinc x$ is
precisely $\chi_{[-\pi,\pi]}(\xi)$,  the characteristic function on $[-\pi,\pi]$.
In this way, the Fourier transform of $\mathcal{P}u$ is
\[
\mathcal{F}[\mathcal{P}u](\xi) = h \chi_{[-\pi/h,\pi/h]}(\xi)\sum_{k=-\infty}^\infty u_k e^{-i\xi x_k}
\]
and the discrete operator defined by $(-\Delta)^{\alpha/2} [\mathcal{P}u](x_j) $
can be written as
\begin{equation*} 
\frac{1}{2\pi}\int_{-\infty}^\infty\! e^{i\xi x_j}|\xi|^\alpha \mathcal{F}[\mathcal{P}u](\xi)\,
\mathrm{d}\xi = \frac{h}{2\pi}\int_{-\pi/h}^{\pi/h}\!
\Big(\sum_{k=-\infty}^\infty u_k e^{i\xi(x_j-x_k)}\Big) |\xi|^\alpha\,  \mathrm{d}\xi.
\end{equation*}
Finally using the identity~\eqref{eq:invsemiFFT}, we get
\[
 \frac{h}{2\pi} \int_{-\pi/h}^{\pi/h} \left(
\sum_{k=-\infty}^{\infty} u_j e^{i\xi (x_j-x_k)}
\right)|\xi|^{\alpha}\,\mathrm{d}\xi
=u_j e^{i\xi x_j}\int_{-\pi/h}^{\pi/h}
\left(\frac{h}{2\pi}
\sum_{k=-\infty}^{\infty}  e^{-i\xi x_k}
\right) |\xi|^{\alpha}\,\mathrm{d}\xi=0.
\]
Therefore, by combining the previous two equations,
$(-\Delta)^{\alpha/2} [\mathcal{P}u](x_j)$  can be simplified as
\begin{align*}
 &\quad \frac{h}{2\pi}\int_{-\pi/h}^{\pi/h}\!\Big(\sum_{k=-\infty}^\infty u_k e^{i\xi(x_j-x_k)}\Big) |\xi|^\alpha\,  \mathrm{d}\xi
-\frac{h}{2\pi}\int_{-\pi/h}^{\pi/h}\!\Big(\sum_{k=-\infty}^\infty u_j
 e^{i\xi(x_j-x_k)}\Big) |\xi|^\alpha\,  \mathrm{d}\xi
\cr
&= \sum_{k=-\infty}^\infty (u_k-u_j)\frac{h}{2\pi}\int_{-\pi/h}^{\pi/h}\! e^{i\xi(x_k-x_j)} |\xi|^\alpha\, \mathrm{d}\xi \cr
&=-\sum_{k=-\infty}^\infty (u_j-u_{j-k})\frac{h^{-\alpha}}{2\pi}\int_{-\pi}^{\pi}\! |\xi|^\alpha\cos k\xi\, \mathrm{d}\xi.
\end{align*}
Comparing the last expression with~\eqref{FLh}, we recover the same weights $w^{SP}_k$ given
by~\eqref{eq:spweight}. Since the symbol $M^{SP}(\xi)$ is exactly $|\xi|^{\alpha}$, we call $w^{SP}$ the spectral weights.

\begin{figure}[htp]
 \begin{center}
  \includegraphics[totalheight=0.27\textheight]{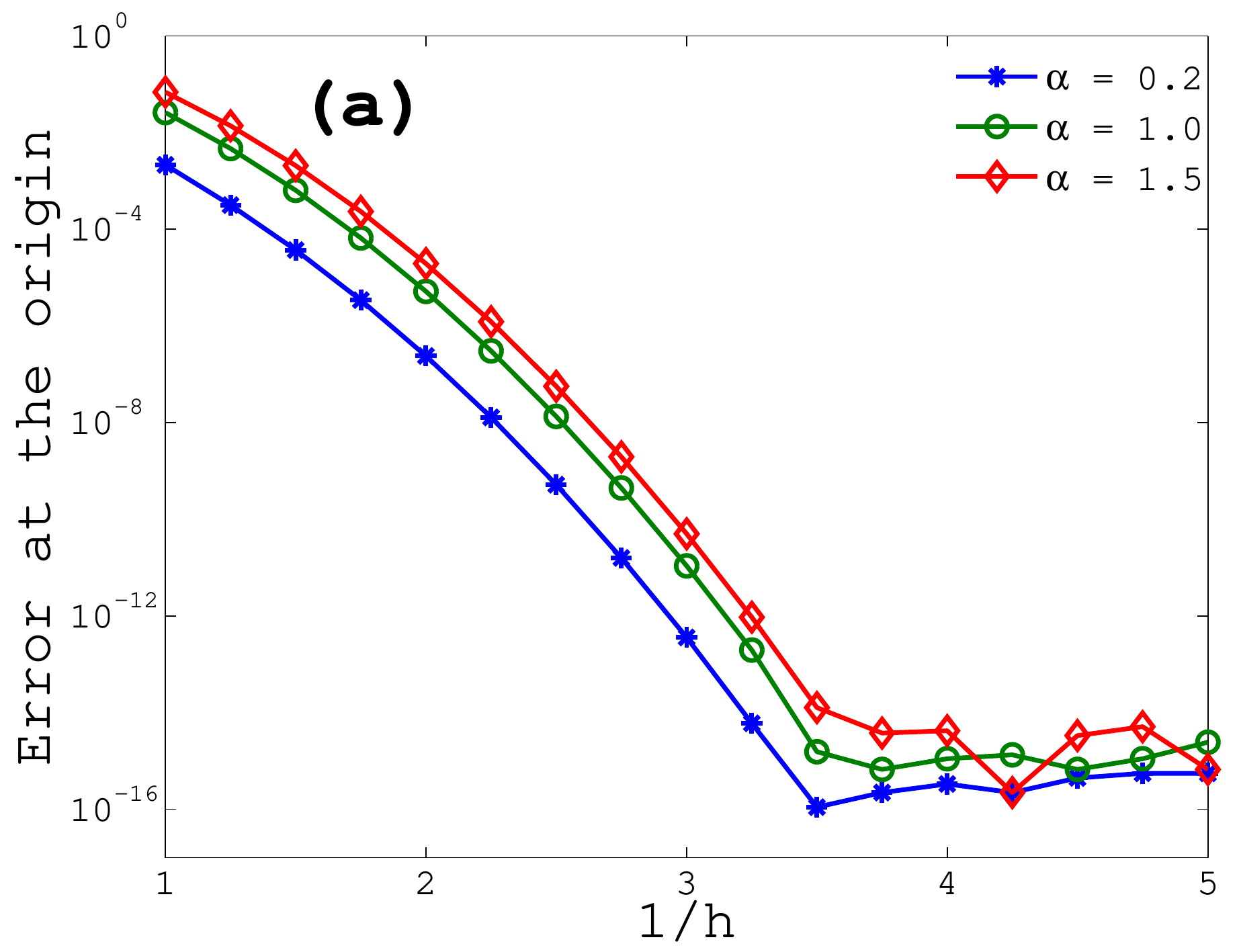}
  $~$\includegraphics[totalheight=0.27\textheight]{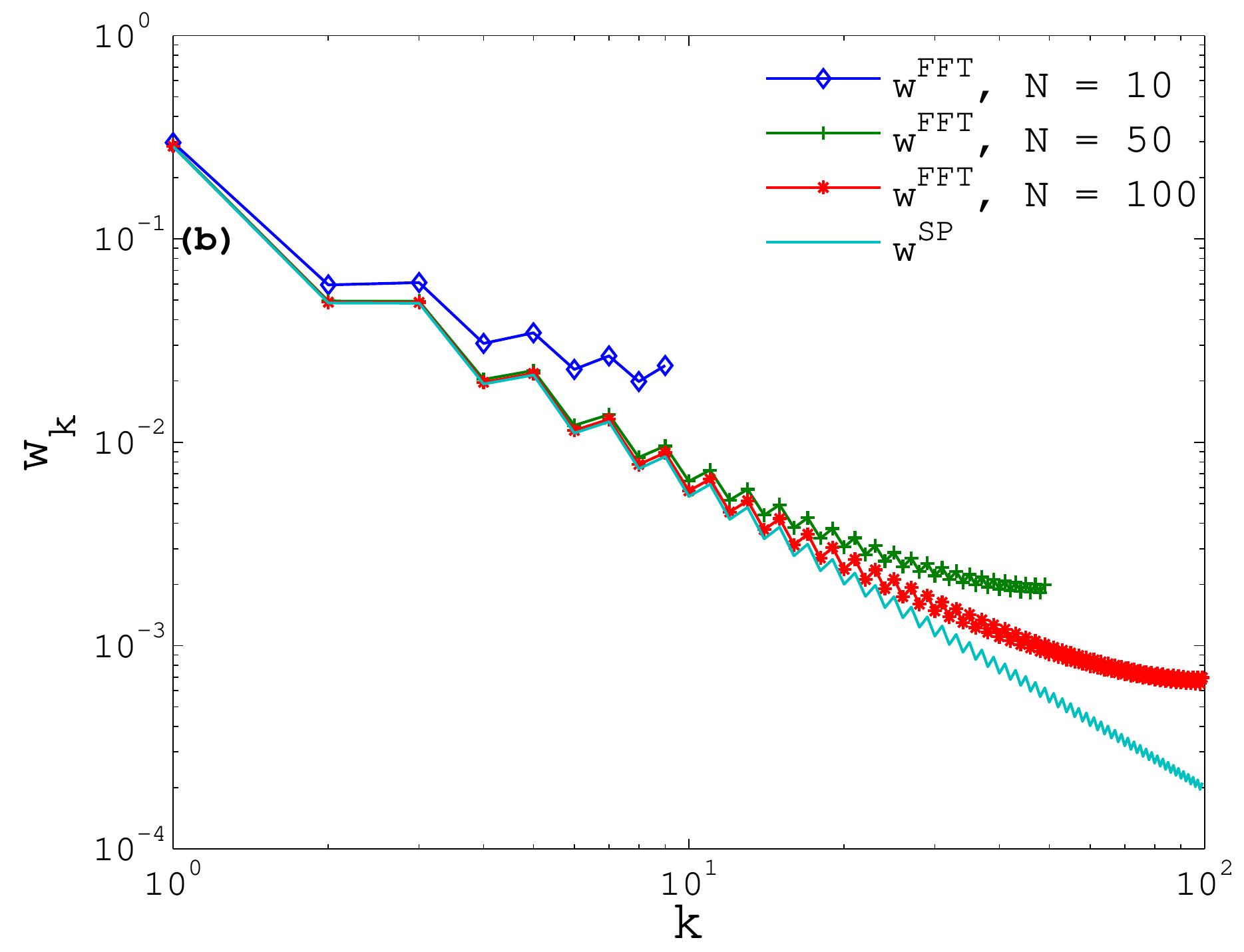}
 \end{center}
\caption{(a) The spectral accuracy of the spectral weights $w^{SP}$ for the fractional Laplacian of $e^{-x^2}$
at the origin $\alpha =0.2$ with $\alpha = 1.0$ and $\alpha=1.5$. (b) The spectral weights $w^{SP}$ and
the weights extracted from FFT (normalized by the grid size $h^{\alpha}$) for $\alpha=0.5$.
}
\label{fig:spectralconv}
\end{figure}

The spectral convergence rate of the weights $w^{SP}$ is verified in Figure~\ref{fig:spectralconv}(a),
for the fractional Laplacian of $u(x)=e^{-x^2}$ at the origin with different $\alpha$.
The computational domain is fixed to be $[-8,8]$ and the accuracy does not improve on
larger domains for fixed grid size $h$.  By a careful analysis, FFT-based scheme for the
fractional Laplacian also takes the form~\eqref{FLh}, and the  weights\footnote{These weights can be taken as the output
of the FFT-based method on the discrete Dirac delta function.}
are trapezoidal rule approximation of the integral~\eqref{eq:spweight}.
Consequently, the weights from FFT-based scheme converge to the spectral weights
when the number of points used in FFT goes to infinity, as shown in~\ref{fig:spectralconv}(b).

\begin{rem}
When $k$ is large, it is difficult to evaluate highly oscillatory integrals like~\eqref{eq:spweight}
accurately, and alternative expressions using special functions are preferred.
In terms of the lower incomplete Gamma function $\gamma(a,z) = \int^z_0 t^{a-1}e^{-t}dt$,
the anti-derivative of $\xi^{\alpha}e^{ik\xi}$ can be evaluated as
$(-ik)^{-\alpha-1}\gamma(\alpha+1,-ik\xi)$.
Hence the integral~\eqref{eq:spweight} can be written as
\begin{align}
  \label{eq:lommelsp}
  w^{SP}_k
&  = \frac{h^{-\alpha}}{\pi} \mbox{Re}\big(
  (-ik)^{-\alpha-1}\big[\Gamma(\alpha+1,-\pi ik)-\Gamma(\alpha+1,0)\big]
  \big) \cr
  & =    -\frac{h^{-\alpha}\pi^{\alpha}}{\alpha+1} \mbox{Re}\big({
  }_1F_1(\alpha+1;\alpha+2;ik\pi)\big),
  \end{align}
in terms of confluent hypergeometric function ${ }_1F_1$. These special functions
involved can be evaluated much more efficiently with existing
packages~\cite{MR2383071,MR1406797}.
\end{rem}

\begin{rem}
The weights $w_k^{SP}$ for $\alpha=1$ or $2$ can be obtained in elementary means,
that is,
\begin{equation}
    \label{eq:SpecialSpectralWeight}
w^{SP}_k =
\begin{cases}
\big(1-(-1)^k\big)\big/{k^2\pi h}
, & \alpha = 1,
 \\[0.1em]
 2(-1)^{k+1}\big/{k^2h^2} ,
& \alpha = 2,
\end{cases}
\end{equation}
where the case $\alpha=2$ already appeared
in~\cite[equation (2.14)]{MR1776072} for
spectrally accurate discretization of the second derivative.
These special cases indicate that the desired non-negativity of the
weights may be lost; in fact, it can be shown numerically that the weights
have alternating signs for $\alpha$ between $1$ and $2$.

\end{rem}

\subsection{Weights from regularized Fourier symbol}
The above scheme with the spectral weights $\{w_k^{SP}\}$,
despite its high accuracy for smooth functions, has some
undesired properties. Because of the presence of negative
weights for $\alpha>1$, the discrete fractional Laplacian
of functions defined on $\mathbb{Z}_h$ is not necessarily negative
at its global maximum. Negative densities could  arise in the
explicit Euler scheme~\eqref{eq:disfracheat} for the fractional heat
equation. Moreover, in the limit as $\alpha$ approaches $2^-$,
the scheme with limiting weights shown in~\eqref{eq:SpecialSpectralWeight}
does not agree with any common finite difference approximation of $-\partial_{xx}$
on a compact stencil.

On the other hand, to reinforce the limiting behaviour as $\alpha \to 2^{-}$,
the general scheme  can be constructed as follow:
take the symbol at $\alpha=2$ and define the new symbol for
$\alpha \in (0,2)$  by raising to power $\alpha/2$.
For instance, the rescaled symbol to the standard three-point
central difference method of $-\partial_{xx}$ is $2(1-\cos(\xi))$
and thus the symbol for general $\alpha$ between $0$ and $2$ is just
\begin{equation}\label{eq:regsymb}
    \qquad M^{PER}(\xi) = \left[2\big(1-\cos(\xi)\big)\right]^{\alpha/2},
\end{equation}
with the weights
\begin{equation*}
w^{{PER}}_k = -\frac{h^{-\alpha} }{\pi}  \int_0^\pi
\!  \left[2\big(1-\cos(\xi)\big)\right]^{\al/2} \cos (k\xi) \,\mathrm{d} \xi.
\end{equation*}
In fact, these  weights have already been calculated explicitly
in~\cite[equation (7)]{FLBD},
\begin{equation}\label{eq:expPERweights}
w^{{PER}}_k = h^{-\alpha}\frac{\Gamma\big(k-\frac{\alpha}{2}\big)
\Gamma(1+\alpha)}{\pi \Gamma\big(k+1+\frac{\alpha}{2}\big)}
\sin\big(\frac{\alpha \pi}{2}\big), \quad k>0.
\end{equation}
Since $\Gamma(z)>0$ for any $z>0$, this closed form expression clearly shows
the non-negativity of the weights for $\alpha \in (0,2)$.


Following the same spirit, other schemes can be constructed
based on higher order difference approximation of $-\partial_{xx}$.
For example, from the five-point central difference
\[
-u''(x_j)\approx    \frac{1}{h^2}\left(
    \frac{1}{12}u(x_j+2h)-\frac{4}{3}u(x_j+h)+\frac{5}{2}u(x_j)
    -\frac{4}{3}u(x_j-h)+\frac{1}{12}u(x_j-2h)
    \right) ,
\]
we can choose $M(\xi)=\left(\frac{5}{2}-\frac{8}{3}\cos\xi+\frac{1}{6}\cos
2\xi\right)^{\alpha/2}$.
However, this approach of pursing higher order schemes is limited
for at least two reasons. First, the resulting weights may become negative
when $\alpha$ is close to $2$.
Second, the integral~\eqref{eq:SpectralWeight}
defining the weights from the symbol is highly oscillatory for large $k$,
and the error in the evaluation of the weights can contaminate
the accuracy of the underlying scheme, unless closed form
expressions like~\eqref{eq:spweight} or~\eqref{eq:expPERweights}
are available.

\begin{rem} The superscript $PER$ is chosen because the rescale symbol
    $M^{PER}(\xi)$  can be extended smoothly as a periodic function near
    the boundary $\pm \pi$ (though still not smooth near the origin).
\end{rem}


\subsection{Gorenflo-Mainardi scheme using \GL weights}

In one dimension, the fractional Laplacian can be expressed as
the symmetric combination of two fractional derivatives as in~\eqref{eq:fracdevlap},
and hence numerical schemes for fractional Laplacians can be constructed from
the classical Gr\"unwalkd-Letnikov differences
originated from fractional derivatives.  The corresponding weights, proposed by
Gorenflo and Mainardi~\cite{gorenflo1998random,gorenflo1999discrete,gorenflo2002discrete}, are given by
\begin{subequations}
\begin{equation}\label{eq:GLa01}
w^{GL}_k = \frac{h^{-\al} }{2\cos (\alpha \pi /2)}
\frac{\alpha \Gamma(k-\alpha)}{k!\ \Gamma(1-\alpha)} ,\qquad k \geq 1,
\end{equation}
for $\alpha \in (0,1)$ and
\begin{equation}\label{eq:GLa12}
w^{GL}_k =
\begin{cases}
-\frac{h^{-\al}}{2\cos (\alpha\pi/2)}\Big[1+\alpha(\alpha-1)/2\big],
& k=1, \\[0.2em]
\frac{h^{-\al}}{2\cos (\alpha\pi/2)}\frac{\alpha
\Gamma(k+1-\alpha)}{(k+1)!\ \Gamma(1-\alpha)}, \quad\qquad &  k>1,
\end{cases}
\end{equation}
\end{subequations}
for $\alpha \in (1,2]$.  Notice that the weights in the range $\alpha \in
(1,2)$ are just a simple shift of those for $\alpha \in (0,1)$, to preserve
the non-negativity of the weights and therefore many other desired properties.
The case $\al = 1$ is handled separately, because it can not be continued
from the above two cases~\eqref{eq:GLa01} and~\eqref{eq:GLa12} by taking
the limits $\alpha \to 1^-$ and $\alpha \to 1^+$ respectively;
for this special case, one option is to choose (see~\cite{gorenflo2002discrete}
for more details)
\[
w^{GL}_k = \frac{1}{\pi h}  \frac{1}{k (k+1)}, \qquad k>0.
\]


The rescaled symbol $M^{GL}(\xi)$ associated with the weights~\eqref{eq:GLa01}
and~\eqref{eq:GLa12} can be computed explicitly,
based on the  binomial expansion of $(1-z)^{\alpha}$. That is,
\begin{subequations}
\begin{align}
 (1-z)^{\alpha} &= 1-\frac{\alpha}{\Gamma(1-\alpha)}\sum_{k=1}^\infty
 \frac{\Gamma(k-\alpha)}{k!}z^k \label{eq:bexp1} \\
 &= 1-\alpha z + \frac{1}{2}\alpha(\alpha-1)z^2-\frac{\alpha}{\Gamma(1-\alpha)}
\sum_{k=2}^\infty \frac{\Gamma(k+1-\alpha)}{(k+1)!} z^{k+1}.\label{eq:bexp2}
\end{align}
\end{subequations}
By substituting $z=1$ into~\eqref{eq:bexp1} and~\eqref{eq:bexp2}, we obtain
\begin{equation}\label{eq:GLw0}
w_0^{GL} = -\sum_{k\neq 0} w^{GL}_k
= \begin{cases}
  -[h^{\alpha}\cos (\alpha\pi/2)]^{-1}, \qquad & \alpha \in (0,1),\cr
  \alpha [h^{\alpha}\cos (\alpha\pi/2)]^{-1}, & \alpha \in (1,2].
\end{cases}
\end{equation}
Once $w_0^{GL}$ is determined, the rescaled symbol $M(\xi)$ is obtained
by rearranging the terms in~\eqref{eq:bexp1} and~\eqref{eq:bexp2}
with $z=e^{\pm i\xi}$, that is,
\begin{align*}
    M^{GL}(\xi) = -h^{\alpha}\sum_{k=-\infty}^\infty e^{-ik\xi}w_k^{GL}
 &=\begin{cases}
   \frac{1}{2\cos \alpha \pi/2}
   \left[(1-e^{i\xi})^\alpha+(1-e^{-i\xi})^\alpha\right],
   \qquad &\alpha \in (0,1),\cr
   \frac{1}{2\cos \alpha\pi/2}
   \left[e^{-i\xi}(1-e^{i\xi})^\alpha
   +e^{i\xi}(1-e^{-i\xi})^\alpha\right],&\alpha \in (1,2].
 \end{cases}
\end{align*}
Here the branch of the multivalued functions $(1-e^{\pm i\xi})^\alpha$
are chosen so that it is  consistent with the expansions~\eqref{eq:bexp1}
and~\eqref{eq:bexp2}. Using trigonometric identities,
$M^{GL}(\xi)$ can be further simplified as
\begin{equation}\label{eq:GLM}
    M^{GL}(\xi) = \begin{cases}
\frac{\cos\frac{\pi-|\xi|}{2}\alpha}{\cos\frac{\alpha\pi}{2}}
\left(2\sin\frac{|\xi|}{2}\right)^\alpha, \qquad &
\alpha \in (0,1),\cr
\frac{\cos\left(\frac{\pi-|\xi|}{2}\alpha+|\xi|\right)}{\cos\frac{\alpha\pi}{2}}
\left(2\sin\frac{|\xi|}{2}\right)^\alpha, \qquad &
\alpha \in (1,2].
\end{cases}
\end{equation}

However, this approach based on fractional derivatives is
restricted to be in one dimension. The straightforward extension
to $\mathbb{R}^n$ by taking summations of fractional Laplacian
in each dimension gives rise to the symbol $|\xi_1|^{\alpha}
+ |\xi_2|^{\alpha} + \cdots + |\xi_n|^{\alpha}$,
different from the one $|\xi|^{\alpha}=\big(|\xi_1|^2+\cdots+|\xi_n|^2\big)^{\alpha/2}$
of our interests here.

\subsection{The finite difference-quadrature method}
Finally we discuss the weights derived from quadratures based on the
singular integral representation~\eqref{eq:rieszfrac}.
This approach has been taken in the context of finite volume or
finite difference methods~\cite{MR2552219,MR2795714,MR2832791},
where $u$ is assumed to be the
constant $u(x_j)$ on the interval $(x_j-h/2,x_j+h/2)$. In this way,
\begin{align*}
    (-\Delta)^{\alpha/2}u(x_j) &=
    C_{1,\alpha}\sum_{k=-\infty}^\infty \int_{x_{j-k}-h/2}^{x_{j-k}+h/2}
    |x_j-y|^{-1-\alpha}\big(u(x_j)-u(y)\big)    \dr y \cr
    &\approx \sum_{k\neq 0} (u_j-u_{j-k})C_{1,\alpha}
    \int_{x_{j-k}-h/2}^{x_{j-k}+h/2} |x_j-y|^{-1-\alpha} \dr y,
\end{align*}
from which the weights $w_k$ can be extracted. However,
the singularity when $y\approx x_j$ is in general not treated
appropriately, leading to
inconsistency as $\alpha$ goes to $2^-$ (a recent correction is made
in~\cite{droniou2014uniformly}).

An improved  difference-quadrature scheme is proposed by the authors
in~\cite{HuangOberman1},
using piecewise local polynomial basis. Near the singularity $y\approx x_j$ in
the integral~\eqref{eq:rieszfrac},
$u(y)$ is replaced by the (second order) Taylor expansion of $u(x)$ at $x_j$,
while away from the singularity, $u(y)$ is approximated by piecewise linear or quadratic polynomials.
Here we only quote the expressions of the weights and refer to~\cite{HuangOberman1} for more details.
Define the auxiliary functions $F(t)$ and $G(t)$ such that
$F''(t) = G'''(t)=t^{-1-\alpha}$, or equivalently,
\[
F(t) =
\begin{cases}
\frac{1}{(\al-1)\al} \abs{t}^{1-\al},& \al \not = 1
\\
- \log |t|, & \al = 1
\end{cases},\qquad\quad
G(t) =
\begin{cases}
\frac{1}{(2-\al)(\al-1)\al} \abs{t}^{2-\al},& \al \not = 1
\\
t-t\log |t|,  & \al = 1
\end{cases}.
\]
Then the weights using piecewise linear functions (Tent functions) are given by
\[
 w_k^T=
\Cal h^{-\alpha}
 \begin{cases}
\dfrac{1}{2-\alpha}
-F'(1)+F(2)-F(1), &k = 1, \\
F(k+1)-2F(k)+F(k-1), & k = 2, 3, \cdots,
\end{cases}
\]
and the weights using piecewise quadratic functions are given by
\[
    \frac{h^{\alpha}w_k^Q}{\Cal} =
 \begin{cases}
\dfrac{1}{2-\alpha}-G''(1)-\dfrac{G'(3)+3G'(1)}{2}+G(3)-G(1), & k = 1,
\\
 2\big[  G'(k+1)+G'(k-1)
 - G(k+1)+G(k-1)\big], & k = 2, 4, 6, \cdots,
\\
 -\dfrac{G'(k+2)+6G'(k)+G'(k-2)}{2}
 +G(k+2)-G(k-2), & k = 3,5,7,\cdots.
\end{cases}
\]
Piecewise cubic or higher order polynomials can also be used,
but the resulting weights may also become negative (not surprisingly again
when $\alpha$ is close to 2) and hence further study in this direction
is not pursued. Different from the previous three cases,
the rescaled symbol $M(\xi)$ for either $w_k^T$ or $w_k^Q$
does not seem to possess a closed form expression.

With different weights and their symbols in mind, we are now in a position
to compare them so that we can chose the right one in practice.

\section{Discussions and Comparisons of the weights}\label{sec:DCweights}
In this section we compare and discuss more specific properties of the
weights presented in the last section,
especially their order of accuracy,  ultimately important for different choices of the schemes.

\subsection{Basic properties of  the weights}
Properties shared by most of the schemes are discussed in this subsection, including positivity,
asymptotic decay rates as the indices go to infinity, the limiting behaviours as $\alpha$ goes to $2$
 and CFL conditions in the discretisation of evolution equations.
The orders of accuracy of these schemes are discussed in the next subsection,
using a local expansion of the rescaled symbol $M(\xi)$.

\subsubsection*{Positivity and decay rate}
\label{sec:weightprop}
The non-negativity of the weights (except $w_0$)
is essential in practical discretisation of elliptic or parabolic PDEs to preserve
the discrete maximum principle or comparison principle.
The weights $w^{PER}$ and $w^{GL}$ are clearly positivity,
from the explicit formulas of the weights~\eqref{eq:expPERweights},
~\eqref{eq:GLa01} and~\eqref{eq:GLa12}.
The weights $w^T$ and $w^Q$ are also  shown to be strictly positive
in~\cite{HuangOberman1}, but not necessarily true when higher order polynomial
interpolations are used. In contrast, the weights
$w^{SP}$ are strictly positive only for $\alpha \in (0,1)$;
when $\alpha \in (1,2)$, $w_k^{SP}$ is negative for $k$ even, making it
less attractive in many applications despite its spectral accuracy.

The asymptotic decay rates of these weights $w_k$ as the index $k$ goes
to infinity can also be obtained. From asymptotic expansion
$\Gamma(z)\sim\sqrt{2\pi}z^{z-1/2}e^{-z}$ for Gamma function $\Gamma(z)$ with
large argument $z$~\cite{MR1225604}, it is easy to see that
$\Gamma(z+a)/\Gamma(z+b) \sim z^{a-b}$ and all three ratios
\[
 \frac{\Gamma(k-\frac{\alpha}{2})}{\Gamma(k+1+\frac{\alpha}{2})},\quad
 \frac{\Gamma(k-\alpha)}{k!},\quad
 \frac{\Gamma(k+1-\alpha)}{(k+1)!}
\]
have the same leading asymptotics $k^{-\alpha-1}$.
Consequently,
\[
  w_k^{PER} \sim \frac{\Gamma(1+\alpha)}{\pi}\sin\left(\frac{\alpha\pi}{2}\right)
 k^{-\alpha-1}h^{-\alpha}\quad \mbox{ and }\quad w_k^{GL} \sim
 \frac{\alpha}{2\cos \frac{\alpha\pi}{2}}\frac{1}{\Gamma(1-\alpha)}k^{-\alpha-1}h^{-\alpha}.
\]
Similarly, from the expressions for $w^T$ and $w^Q$ given in the previous section,
\[
    w_k^T \sim C_{1,\alpha} k^{-\alpha-1}h^{-\alpha},
    \qquad
    w_k^Q \sim \begin{cases}
        \frac{4}{3}C_{1,\alpha} k^{-\alpha-1}h^{-\alpha},\quad
        &k \mbox{ odd},\\
        \frac{2}{3}C_{1,\alpha} k^{-\alpha-1}h^{-\alpha}
        & k \mbox{ even}.
    \end{cases}
\]
In fact, the constant prefactors to $k^{-\alpha-1}h^{-\alpha}$ are exactly
the same for all the weights $w^{PER}$, $w^{GL}$ and $w^{T}$. Using the well-known
Euler's reflection formula $\Gamma(z)\Gamma(1-z)=\pi/\sin z$
and the duplication formula $\Gamma(z)\Gamma\big(z+\frac{1}{2}\big)
=2^{1-2z}\sqrt{\pi}\Gamma(2z)$~\cite{MR1225604}, we have
\[
  \frac{\Gamma(1+\alpha)}{\pi} \sin\left(\frac{\alpha\pi}{2}\right)
  =\frac{\alpha}{2\cos \frac{\alpha\pi}{2}}\frac{1}{\Gamma(1-\alpha)}
  =\frac{\alpha 2^{\alpha-1}\Gamma(\frac{1+\alpha}{2})}
  {\pi^{1/2}\Gamma(\frac{2-\alpha}{2})} (=C_{1,\alpha}).
\]

The asymptotic decay rates are more complicated for $w^{SP}$.
We can show numerically that, for $\alpha \in (0,1)$, the weights $w_k^{SP}$ have the same scaling 
$k^{-\alpha-1}h^{-\alpha}$ as others, while for $\alpha \in (1,2)$,
the weights $w_k^{SP}$ scale like $k^{-2}h^{-\alpha}$ with
alternating signs.
The scaling behaviour of the weights is verified in Figure~\ref{fig:weights}.

\begin{figure}[htp]
 \begin{center}
  \includegraphics[totalheight=0.27\textheight]{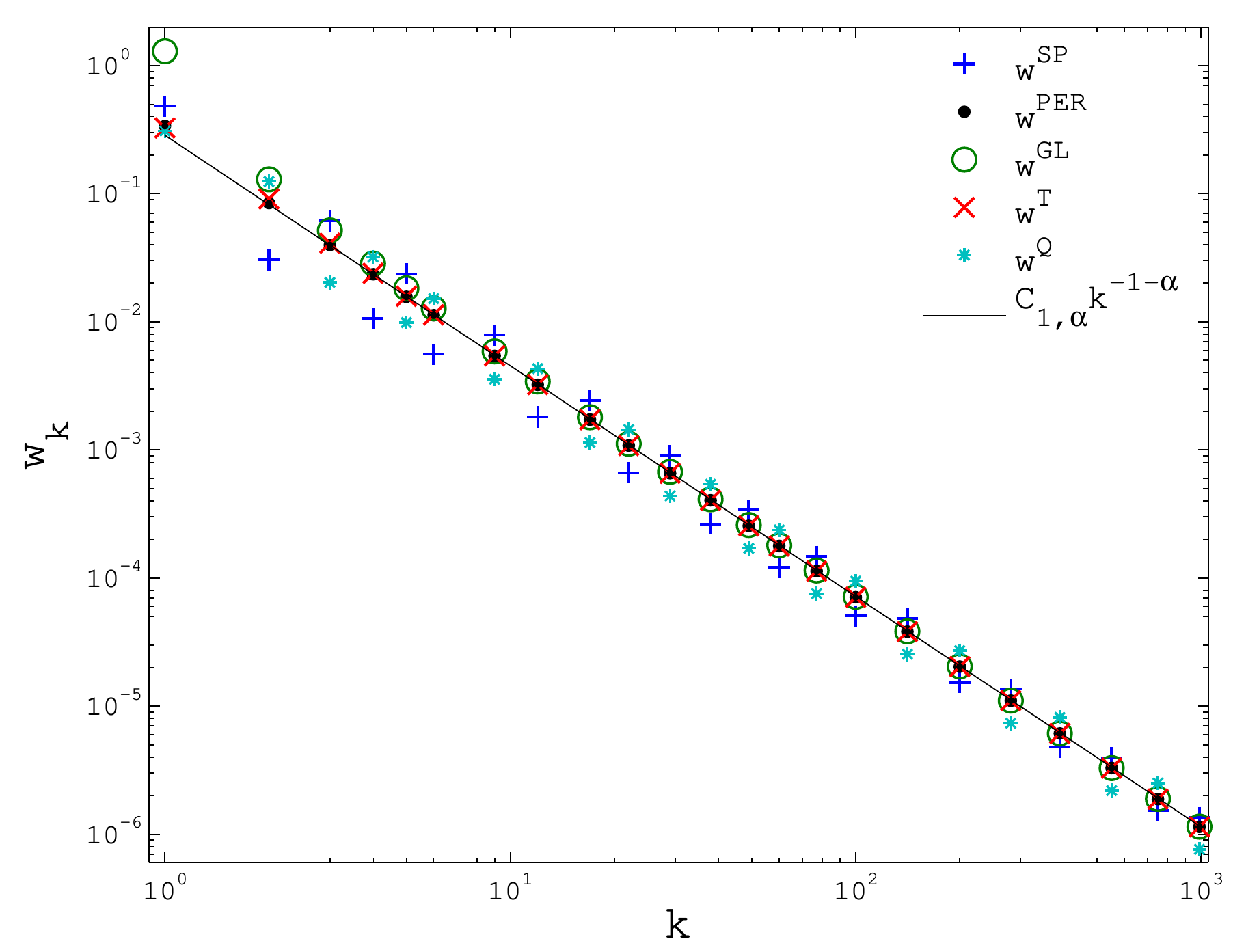}$~$
  \includegraphics[totalheight=0.27\textheight]{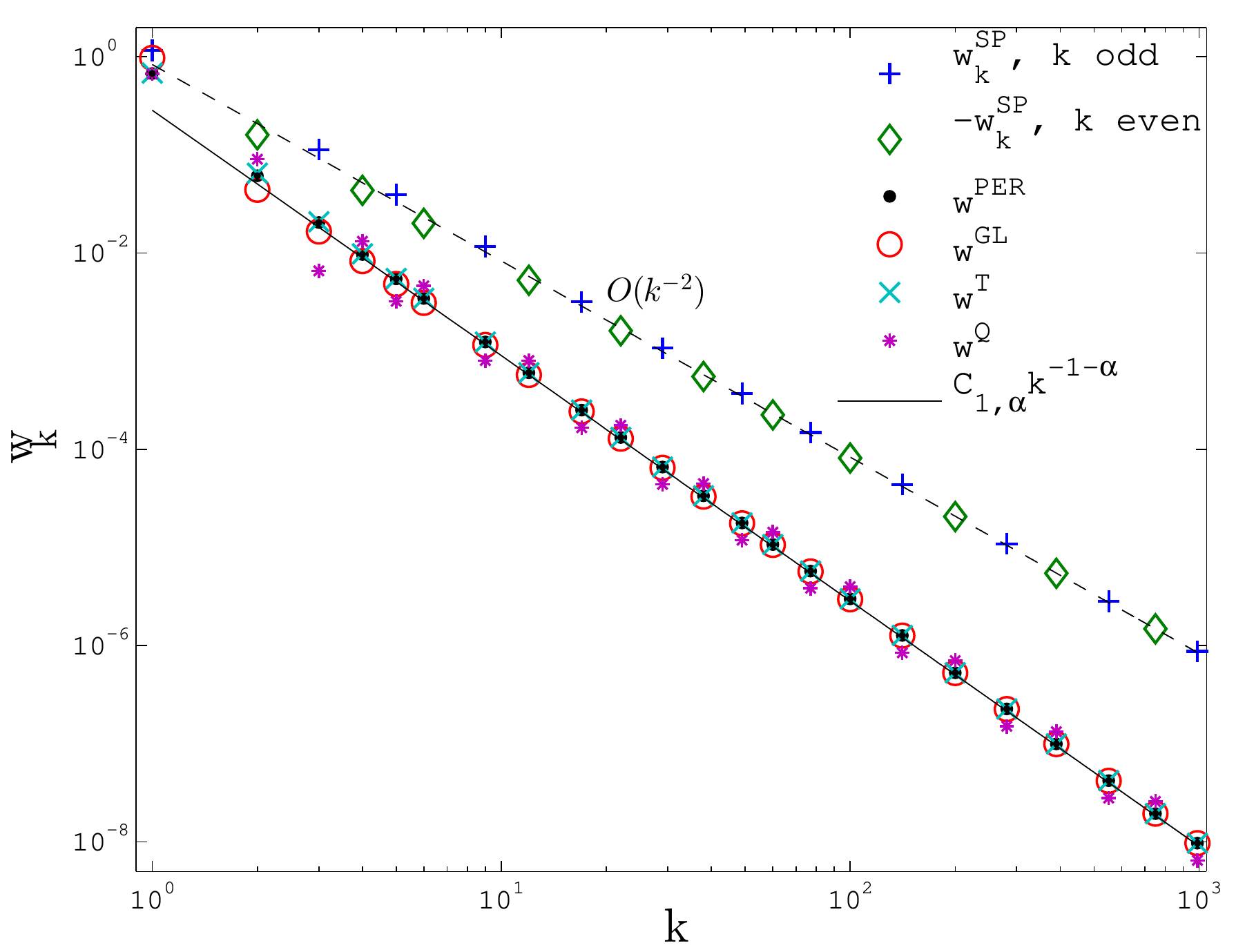}
 \end{center}
\caption{The decay of the weights $w_k$ of different methods
    (without the factor $h^{-\alpha}$), for {\bf (a)} $\alpha = 0.8$
    and {\bf (b)} $\alpha = 1.5$. All positive weights decay with the same rate
    $k^{-1-\al}$, while the rate for $w^{SP}$ as $\alpha\in(1,2)$ is
    $O(k^{-2})$.}
\label{fig:weights}
\end{figure}

\begin{rem} The scaling behaviour of the Fourier integral $\int_{-\pi}^{\pi}
  M(\xi)e^{ik\xi}\dr \xi$  in $k$
  is intimately related to the singularity of $M(\xi)$ on the interval $[-\pi,\pi]$,
analogous to the relation between the smoothness of a function
and the decay of its Fourier transform (or Fourier series).
Using asymptotic analysis on~\eqref{eq:spweight}, one can show that
$w_{k}^{SP}$ decays like $k^{-2}$ for $\alpha \in (1,2)$, instead of
the expected rate $k^{-\alpha-1}$ shared by other weights.
\end{rem}
\subsubsection*{The limit when $\alpha \to 2^-$}
As $\alpha$ goes to $2^-$, the fractional Laplacian $(-\Delta)^{\alpha/2}$
becomes the standard (negative) Laplacian.  Hence the corresponding
numerical scheme~\eqref{FLh} is expected to converge to some approximation
of $-\partial_{xx}$, preferably the three-point central difference
$(2u_{j}-u_{j+1}-u_{j-1})/h^2$. The limiting weights can be calculated directly from their
explicit expressions presented in the last section.
From~\eqref{eq:SpecialSpectralWeight}, $w^{SP}$ becomes the
spectral method on the whole real line~\cite{MR1776072}.
All other weights $w^{PER}$, $w^{GL}$, $w^T$ and $w^Q$ converge
to the limiting scheme with $w_{\pm}=1$ and $w_k=0$ for $k>1$,
 mainly using the fact that $\Gamma(z)\approx 1/z$ as $z\to 0$.
Therefore, the standard three-point central difference in the limit
$\alpha \to 2^-$ is recovered for all weights except $w^{SP}$.


\subsubsection*{CFL condition} When an evolution equation is discretised,
the time step is usually restricted by the so called \emph{CFL condition}
for stability reason. Taking the fractional heat equation $u_t+(-
\Delta)^{\alpha/2}u=0$ for example,
the forward Euler scheme at time level $n+1$ is
\[
    u_{j}^{n+1}=(1+w_0\Delta t )u_j^n + \Delta t
    \sum_{k\neq 0} w_ku_{j-k}^n.
\]
Beside that the weights should be non-negative, the time
step is also restricted by the CFL condition $1+w_0\Delta t \geq 0$, i.e.,
\[
    \frac{\Delta t}{h^{\alpha}} \leq
    C_{\mbox{max}}:=-\frac{1}{h^{\alpha} w_0}
     =\frac{1}{h^{\alpha}\sum_{k\neq 0}w_k}.
\]
As a result, we examine the special weight $w_0$ for different schemes below.

If the rescaled symbol $M(\xi)$ satisfies the condition $M(0)=0$,
$w_0$ is given directly from the integral~\eqref{eq:SpectralWeight}
(by taking $k=0$). This immediately implies that
\[
    w_0^{SP} = -\frac{h^{-\alpha}}{\pi}\int_0^\pi\!
    \xi^\alpha\,\mathrm{d}\xi =
    -\frac{\pi^{\alpha}}{1+\alpha}h^{-\alpha},
\]
and
\[
    w_0^{PER} = -\frac{h^{-\alpha}}{\pi}\int_0^\pi \!
    \Big[2(1-\cos\xi)\Big]^{\alpha/2}\,\mathrm{d}\xi
    =-\frac{4\Gamma(\alpha)}{\alpha\Gamma(\alpha/2)^2}h^{-\alpha},
\]
together with $w_0^{GL}$ already derived in~\eqref{eq:GLw0}.
Though the rescaled symbol $M(\xi)$ is not explicit,
the weights $w^T$ or $w^Q$ constitute a telescoping series,
and
\[
w_0^T= w_0^Q=
-2C_{1,\alpha}h^{-\alpha}\left[\frac{1}{2-\alpha}-F'(1)\right]
=-\frac{2^{\alpha}\Gamma(\frac{1+\alpha}{2})}{\pi^{1/2}\Gamma(2-\frac{\alpha}{2})}h^{-\alpha}.
\]


The constant $C_{\max}=\big(-h^{\alpha}w_0\big)^{-1}
=\big(h^{\alpha}\sum_{k\neq 0}w_k\big)^{-1}$,
as a function of $\alpha$ is shown in Figure~\ref{fig:sumW} for different
schemes. It is obvious that the CFL condition
becomes too restrictive only for $w^{GL}$ near $\alpha =1$, where
the time step has to be vanishingly small. Because of the alternating
signs of the weights $w^{SP}_k$ for $\alpha \in (1,2)$, the CFL condition
is only shown on the interval $\alpha \in (0,1)$.

\begin{figure}
 \begin{center}
  \includegraphics[totalheight=0.3\textheight]{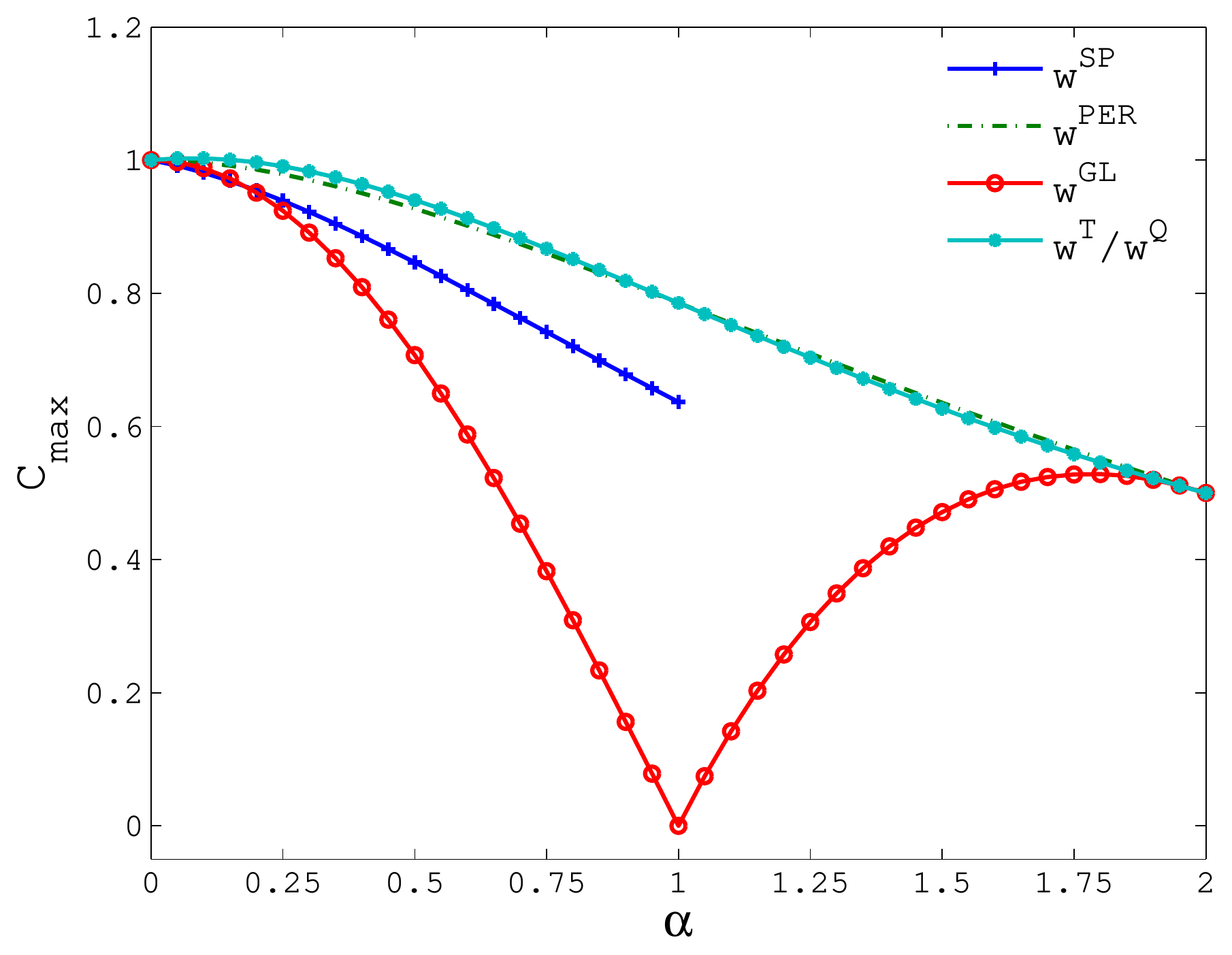}
 \end{center}
 \caption{The CFL condition $C_{\mbox{max}}=\big(h^\alpha\sum_{k\neq
 0}w_k)^{-1}=\big(-h^{\alpha}w_0\big)^{-1}$ for different schemes.
 The spectral weights $w^{SP}$ is only plotted for $\alpha \in (0,1)$, as the weights
 become negative for $\alpha (1,2)$.
 }
\label{fig:sumW}
\end{figure}

\subsection{Order of accuracy via the rescaled symbols}
\label{sec:weightsFourier} In general, it is difficult to assess the accuracy of
finite different schemes for nonlocal operators, precisely because
the nonlocality prevents conventional approaches using Taylor expansions.
Nevertheless, under modest conditions on the decay of the underlying functions in Fourier space,
the order of accuracy of the scheme~\eqref{FLh} can be computed formally,
that is,
\[
    (-\Delta)^{\alpha/2} u(x_j) - (-\Delta_h)^{\alpha/2}u_j =
    \frac{1}{2\pi}\left[\int_{-\infty}^{\infty}\! |\xi|^{\alpha} e^{i\xi x_j}\hat{u}(\xi) \,\mathrm{d}\xi-
\int_{-\pi/h}^{\pi/h} \!\tilde{M}_h(\xi)e^{i\xi x_j}\hat{u}(\xi) \,\mathrm{d}\xi
\right].
\]
If $\hat{u}(\xi)$ decays to zero fast enough,  then the first integral above
is essentially determined on the interval $[-\pi/h,\pi/h]$,
or
\[
    (-\Delta)^{\alpha/2} u(x_j) - (-\Delta_h)^{\alpha/2}u_j \approx
    \frac{1}{2\pi}\int_{-\pi/h}^{\pi/h} \Big(|\xi|^\alpha-\tilde{M}_h(\xi)\Big)e^{i\xi x_j}\hat{u}(\xi)
\,\mathrm{d}\xi.
\]
If the rescaled symbol can be expanded as $M(\xi) = |\xi|^\alpha\big(1+a_1 \xi
+a_2\xi^2+\cdots\big)$ near the origin, or equivalently $\tilde{M}_h(\xi) = |\xi|^{\alpha}
\big(1+a_1\xi^2h^2+a_2\xi^2h^2+\cdots\big)$, the above error becomes
\[
(-\Delta)^{\alpha/2} u(x_j) - (-\Delta_h)^{\alpha/2}u_j \approx
C_1 h + C_2 h^2 + C_3h^3+\cdots,
\]
where the constants
\[
  C_1  = -\frac{a_1}{2\pi}\int_{-\pi/h}^{\pi/h} \xi |\xi|^\alpha e^{i\xi x_j}\hat{u}(\xi) \dr\xi,\quad
  C_2 = -\frac{a_2}{2\pi}
  \int_{-\pi/h}^{\pi/h} |\xi|^{\alpha+2} e^{i\xi x_j}\hat{u}(\xi) \dr\xi, \quad \cdots
\]
are bounded. If $a_k$ (hence $C_k$) is the first non-zero coefficient
in the above expansion of $M(\xi)$, the lead order error is $O(h^{k})$, which is the order of accuracy
of the scheme.

Now we can examine the order of accuracy from $M(\xi)$.
Since $M^{SP}(\xi)$ is exactly $|\xi|^{\alpha}$,
the scheme is spectrally accurate and the error is exactly the
integral $\int_{|\xi|>\pi/h} |\xi|^{\alpha}e^{i\xi x_j}\hat{u}(\xi)
\,\mathrm{d}\xi$.
For the regularized symbol $M^{SP}(\xi) =
\left[2\big(1-\cos(\xi)\big)\right]^{\alpha/2}$, the expansion
\[
M^{PER}(\xi)= |\xi|^\alpha\left(1-\frac{\alpha}{24}\xi^2+\left[\frac{\alpha^2}{1152}
-\frac{\alpha}{2880}\right]\xi^4+\cdots\right),
\]
implies second order accuracy.
For $M^{GL}(\xi)$ motivated from fractional derivatives,
from the simplified expression~\eqref{eq:GLM}, we have
\[
 M^{GL}(\xi) = \begin{cases}
  |\xi|^{\alpha}\left[
  1+\frac{\alpha}{2}\tan \left(\frac{\pi\alpha}{2}\right)|\xi|
  -\left(\frac{\alpha^2}{8}+\frac{\alpha}{12}\right)\xi^2+\cdots\right],\qquad & \alpha \in (0,1) \cr
  |\xi|^{\alpha}\left[
  1+\left(\frac{\alpha}{2}-1\right)\tan\left(\frac{\alpha\pi}{2}\right) |\xi|
  -\left(\frac{\alpha^2}{8}-\frac{5\alpha}{12}+\frac{1}{2}
  \right)\xi^2+\cdots
  \right], &\alpha \in (1,2),
 \end{cases}
\]
which implies only first order accuracy. Finally for the weights $w^{T}$ or $w^Q$ from quadrature, 
although no explicit symbol $M(\xi)$ available,
the accuracy is proved in~\cite{HuangOberman1} to be $h^{2-\alpha}$
for $w^T$ and $h^{3-\alpha}$ for $w^Q$ (seems $h^{4-\alpha}$ for $\alpha$ between $1$ and $2$).
The accuracy of these schemes are verified in Figure~\ref{fig:weightconv}, for the fractional Laplacian of $u(x)=e^{-x^2}$ at the origin, while the spectral convergence of $w^{SP}$
is already confirmed in Figure~\ref{fig:spectralconv}.

\begin{figure}[htp]
 \begin{center}
  \includegraphics[totalheight=0.27\textheight]{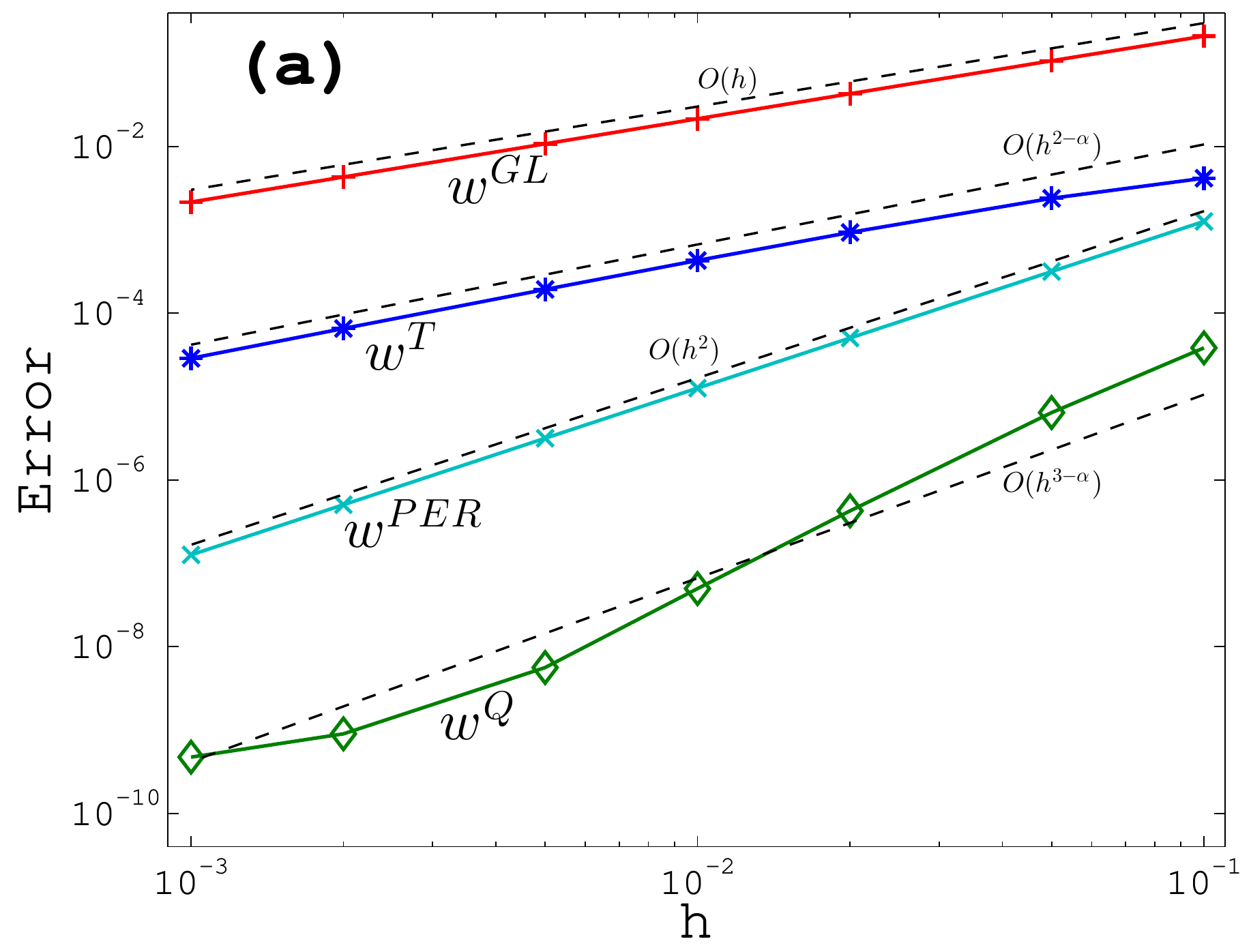}$~$
  \includegraphics[totalheight=0.27\textheight]{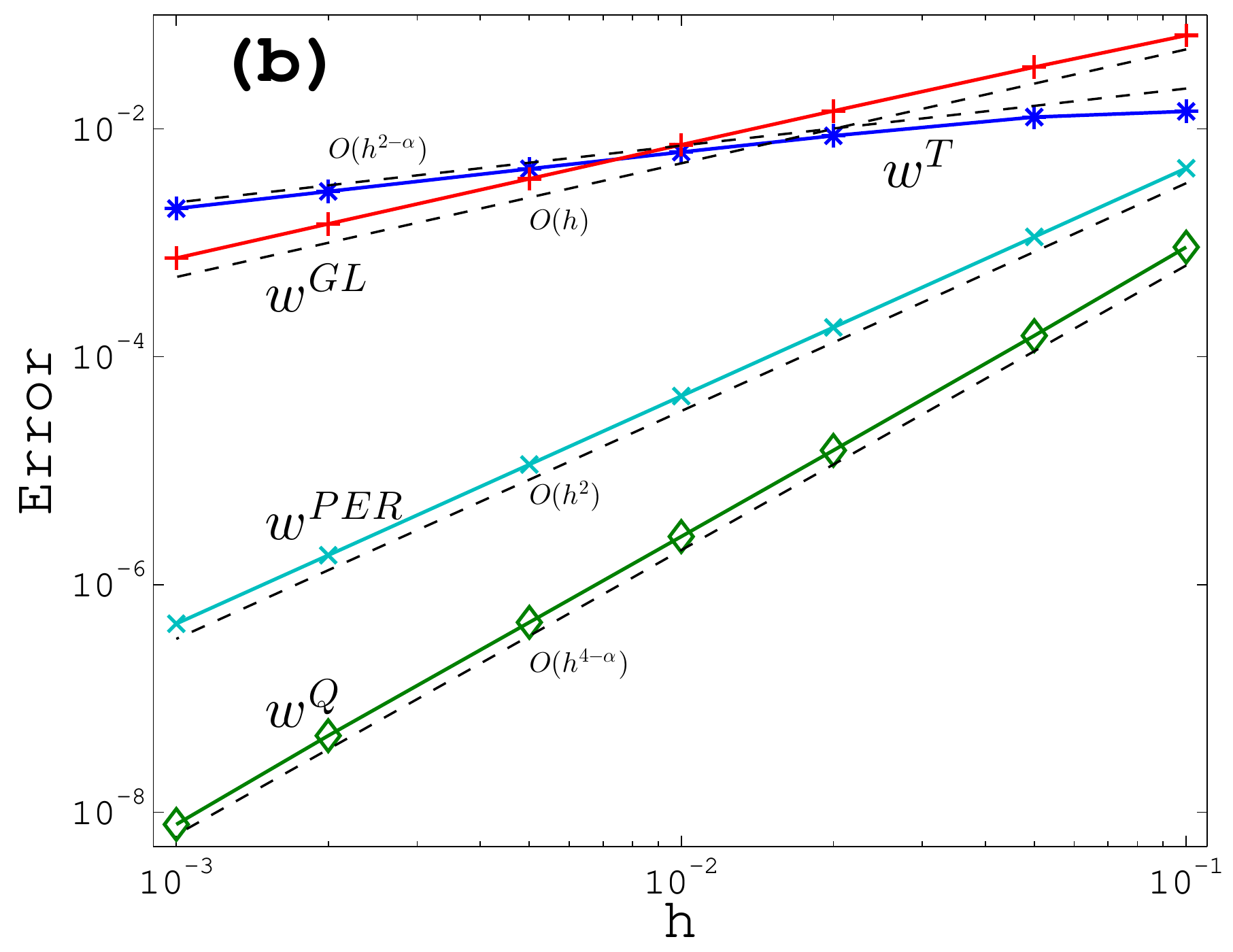}
 \end{center}
 \caption{Convergence rates of different weights in the scheme~\eqref{FLh} for
     the fractional Laplacian of $u(x)=e^{-x^2}$ at the origin:
     {\bf (a)} $\alpha = 0.8$ and {\bf (b)} $\alpha = 1.5$.}
\label{fig:weightconv}
\end{figure}

\begin{rem}
    In practice, the discretisation error depends on other factors.
    If $u$ is not smooth enough, $\hat{u}(\xi)$ does not decay to
    zero fast enough. As a result, the error may be dominated by
    the integral $\int_{|\xi|>\pi/h} |\xi|^{\alpha}e^{i\xi x_j}\hat{u}(\xi)d\xi$,
    exhibiting a different rate of convergence. Moreover,
    when the infinite sum in the scheme~\eqref{FLh} is truncated,
    the desired accuracy may be lost due to
    inappropriate treatment of the boundary conditions.
\end{rem}

\subsection{Summary on the properties of the schemes}
Now we can summarise the properties of different schemes,
as shown in Table~\ref{tab:schemeprop}, and provide some guidance on
the right one to implement in practice. The  \GL scheme,
despite its popularity due to the connection with fractional derivatives,
has only linear convergence rate and the time step in explicit method for evolution
equations could be very restricted at $\alpha\approx 1$.
The weight $w^{T}$ from quadrature with piecewise linear functions has an order of
accuracy less than linear for $\alpha>1$. Therefore,
$w^{GL}$ and $w^{T}$ are less favoured than the rest schemes.
In contrast, both the schemes with $w^{PER}$ and $w^{Q}$
are accurate in most applications, with all the desired properties.
Finally, the spectral method with $w^{SP}$ is very accurate for smooth and
fast decaying functions. However the corresponding weights can be negative
for $\alpha>1$ and do not reduce to the three-point central difference
when $\alpha$ goes to $2$. Therefore, this spectral scheme with $w^{SP}$
may not always be the best choice.

\begin{table}
\begin{center}
\begin{tabular}{  c | c    p{5.1cm} c c }
\hline
& Non-negativity& \qquad CFL Condition ($C_{\mbox{max}}$)  & Accuracy
& limit as $\alpha\to 2^-$ \\  \hline
$w^{SP}$ &  for $0<\alpha \leq 1$ &\qquad  $\pi^{-\alpha}(1+\alpha)$ & Spectral & No\\[1em]
$w^{PER}$  & Yes &  $\qquad
\frac{\alpha\Gamma(\alpha/2)^2}{4\Gamma(\alpha)}$ & $O(h^2)$ & Yes \\[1em]
$w^{GL}$ & Yes & \quad $\cos(\alpha\pi/2),\quad \alpha\in(0,1)$
                  & $O(h)$ & Yes \\
&              &    $-\cos(\alpha\pi/2)/\alpha,\quad \alpha\in (1,2)$       & &
                  \\[0.6em]
$w^T$ & Yes & \quad \   $ \dfrac{\pi^{1/2}\Gamma(2-\alpha/2)}
{2^\alpha\Gamma(\frac{1+\alpha}{2})}$ & $O(h^{2-\alpha})$     &Yes  \\[0.6em]
$w^Q$ & Yes & \quad \    $ \dfrac{\pi^{1/2}\Gamma(2-\alpha/2)}
{2^\alpha\Gamma(\frac{1+\alpha}{2})}$ &$O(h^{3-\alpha})$ &Yes
\\ \hline
\end{tabular}
\end{center}
\caption{A summary of properties of the weights: the non-negativity (except $w_0$),
    the CFL condition $C_{\mbox{max}}=\big(-h^{\alpha}w_0\big)^{-1}$, the
order of accuracy and the convergence to the three-point standard scheme as $\alpha \to 2^-$.
}
\label{tab:schemeprop}
\end{table}

\section{Truncation of the far-field boundary conditions}\label{sec:truncation}

The general scheme~\eqref{FLh} involves infinitely many terms, which has to be
truncated in practice. For Dirichlet problems on a bounded domain,
the infinite sum can usually be approximated using the given boundary conditions
on the exterior of the domain. For problems posed on the whole space, with
zero or other reasonable boundary conditions at infinity, the situation is more
complicated, as a result of the slow decay of the solutions commonly observed in various applications.
For example,  in contrast to compactly supported solutions of the classical porous medium
equation $u_t - \Delta u^m=0$ for $m>1$, the solutions of the fractional porous medium
equation $u_t + (-\Delta)^{\alpha/2}u^m=0$ in $\mathbb{R}^n$ has an algebraic tail
of order $|x|^{-N-\alpha}$ for $\alpha \in (0,2)$, as shown in~\cite{vazquez2014}.
Because of this slow decay to zero, a straightforward truncation like
$\sum_{k=-M}^{M}(u_j-u_{k})w_{j-k}$ of~\eqref{FLh} may lead to significant errors
even for relative large value of $M$.

A first step towards truncating the asymptotic far-field boundary condition was given
by the authors in~\cite{HuangOberman1}, and is reviewed briefly below. Without loss of
generality, the computational domain is denoted by $[-L,L]$, on which the fractional Laplacian
of $u$ at $x_j$ is sought. To proceed, first the singular integral~\eqref{eq:rieszfrac}
is decomposed into three parts,
\begin{multline} \label{eq:threeparts}
 (-\Delta)^{\alpha/2}u(x_j) =
\underbrace{C_{1,\alpha}\int_{-L_M}^{L_M}\!
\big(u(x_j)-u(y)\big)|x_j-y|^{-1-\alpha} \, \mathrm{d}y}_\textrm{(I)} \cr
+ \underbrace{C_{1,\alpha}u(x_j)\int_{|y|>L_M}\! |x_j-y|^{-1-\alpha}
\,\mathrm{d} y}_\textrm{(II)}
- \underbrace{C_{1,\alpha}\int_{|y|>L_M}\! u(y)|x_j-y|^{-1-\alpha}
\,\mathrm{d}y}_\textrm{(III)},
\end{multline}
for some  $L_M \geq L$.
The first term $\textrm{(I)}$ is approximated by the finite truncation
$\sum_{k=-M}^{M}(u_j-u_{k})w_{j-k}$, and
the integral in the second term $\textrm{(II)}$ can be evaluated exactly,
while the last term $\textrm{(III)}$ has to be estimated using asymptotic
far-field boundary conditions. If $u$ decays to zero with an algebraic rate $\beta$,
that is $u(x)=O(|x|^{-\beta})$ for some $\beta>0$, then
\begin{equation}\label{eq:asyuy}
 u(y) \approx u(\pm L)L^\beta |y|^{-\beta}, \qquad y \to \pm \infty.
\end{equation}
Hence the last term $\textrm{(III)}$ can be estimated using the following two integrals,
{\small
\begin{multline}\label{eq:BCM}
C_{1,\alpha}\int_{L_M}^\infty\! u(y)|x_j-y|^{-1-\alpha}\,\mathrm{d}y\approx
C_{1,\alpha}u(L)L^\beta \int_{L_M}^\infty\! (y-x_j)^{-1-\alpha}y^{-\beta}\, \mathrm{d}y\cr
=\frac{C_{1,\alpha}u(L)L^\beta}{ (\al+\beta)(L_M)^{\alpha+\beta}}\ {}_2F_1\Big(\alpha+1,\al+\beta;\al+\beta+1;\frac{x_j}{L_M}
\Big),
\end{multline}
}
and
{\small
\begin{multline}\label{eq:BCP}
 C_{1,\alpha}\int^{-L_M}_{-\infty}\! u(y)|x_j-y|^{-1-\alpha}\,\mathrm{d}y \approx
C_{1,\alpha}u(-L)L^\beta
\int^{-L_M}_{-\infty}\! (x_j-y)^{-1-\alpha}(-y)^{-\beta}\, \mathrm{d}y\cr
=\frac{C_{1,\alpha}u(-L)L^\beta}{ (\al+\beta)(L_M)^{\alpha+\beta}}\ {}_2F_1\Big(\alpha+1,\al+\beta;\al+\beta+1;-\frac{x_j}{L_M}\Big),
\end{multline}
}
in terms of the Gauss hypergeometric function ${}_2F_1$.

\begin{rem} We add a few more comments related to the practical implementation
  of this approach.
If the exact value of the exponent $\beta$ is not available from existing theory,
it can still be estimated from the solution itself by fitting the computed data.
The extended domain size $L_M$ should be strictly larger than $L$
($L_M=3L$ in~\cite{HuangOberman1}), to avoid any possible singularity
of ${}_2F_1(\alpha+1,\al+\beta;\al+\beta+1;z)$ at $z=1$.
In this way, if $x_k$ is outside the interval $[-L,L]$,
$u_k$ can be approximated by~\eqref{eq:asyuy}
in the computation of the finite sum $\sum_{k=-M}^{M}(u_j-u_{k})w_{j-k}$.
\end{rem}

The above treatment of the far-field boundary condition is natural in the context
of the difference-quadrature scheme derived in~\cite{HuangOberman1},
based on the singular integral~\eqref{eq:rieszfrac}.
It turns out that the same approach works for a wide variety of schemes,
because of the ubiquitous scaling $w_k \approx C_{1,\alpha}h^{-\alpha}k^{-1-\alpha}$
shared by most of the weights (see Section~\ref{sec:weightprop}).
This scaling behaviour is expected from a careful comparison between the singular
integral~\eqref{eq:rieszfrac} and the scheme~\eqref{FLh}, that is,
\[
 w_{k} \approx C_{1,\alpha}\int_{x_{k}-h/2}^{x_{k}+h/2}\!
|y|^{-1-\alpha}\,\mathrm{d}y
\approx C_{1,\alpha}\int_{x_k-h/2}^{x_k+h/2} |x_k|^{-1-\alpha} \dr y
= C_{1,\alpha} h^{-\alpha}k^{-1-\alpha}.
\]
Therefore, if $u(y)$ is approximated by the constant $u(x_{k})$ for $y \in (x_{k}-h/2,x_{k}+h/2)$,
\begin{equation}\label{eq:farfieldapprox}
    (u_{j}-u_{k})w_{j-k} \approx  C_{1,\alpha}
    \int_{x_{k}-h/2}^{x_{k}+h/2}\!
    |x_j-y|^{-1-\alpha}\big(u(x_j)-u(y)\big) \, \mathrm{d}y,
  \end{equation}
and the infinite sum in the scheme~\eqref{FLh} can be written as
\begin{multline}\label{eq:FFBC}
 \sum_{k=-M}^{M} (u_j-u_{k})w_{j-k}+
\sum_{k>|M|}(u_j-u_{k})w_{j-k} \approx \sum_{k=-M}^{M} (u_j-u_{k})w_{j-k} \cr
+ C_{1,\alpha}u_j\int_{|y|>x_{M}+h/2}\! |x_j-y|^{-1-\alpha}\,\mathrm{d}y
- C_{1,\alpha}\int_{|y|>x_{M}+h/2}\! u(y)|x_j-y|^{-1-\alpha}\,\mathrm{d}y.
\end{multline}
Since the last two integrals are exactly the same as those in~\eqref{eq:threeparts}
with $L_M = x_M + h/2$, the far-field boundary  conditions with other weights
can be handled in the same way as that in~\cite{HuangOberman1}.

\begin{figure}[htp]
 \begin{center}
\includegraphics[totalheight=0.27\textheight]{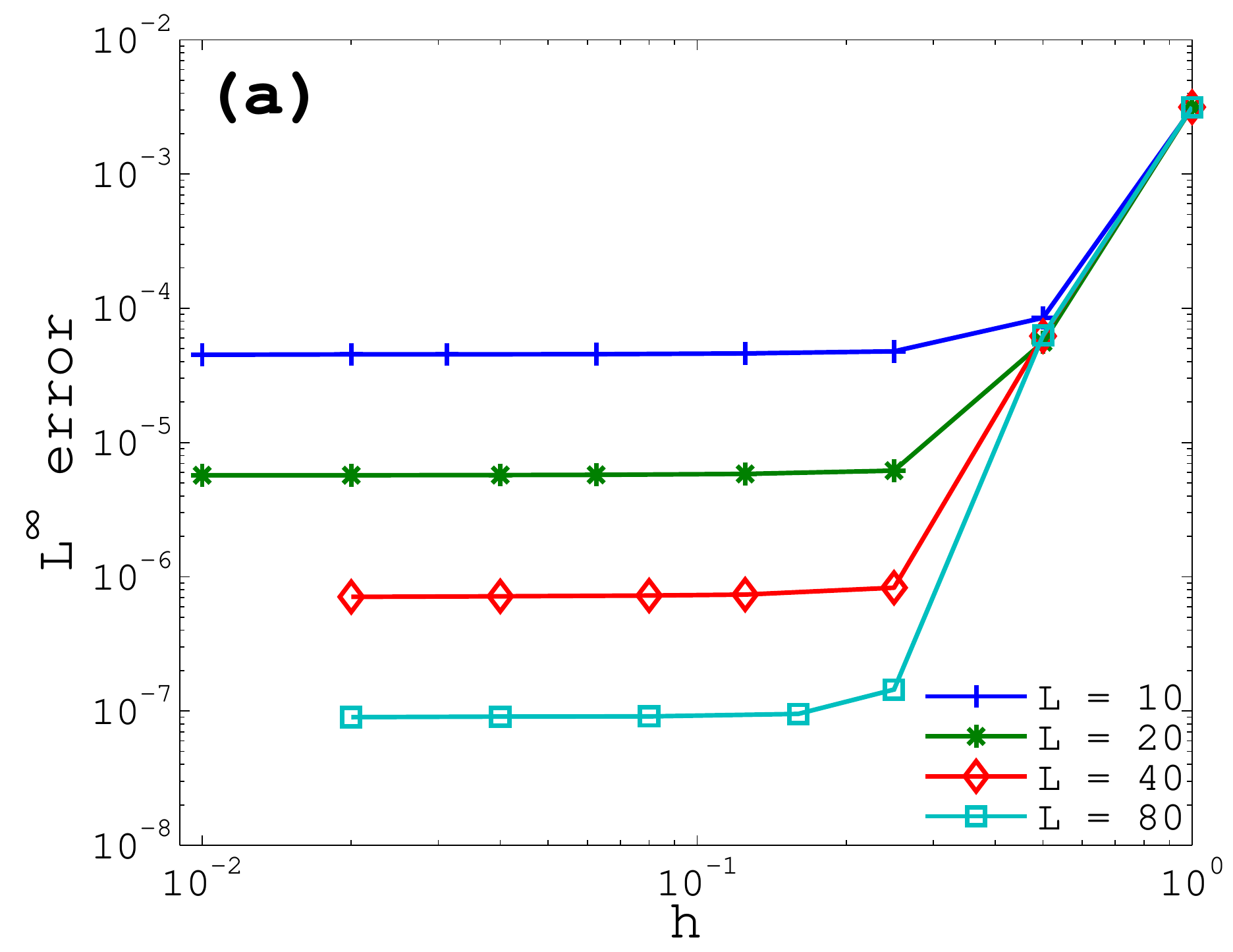}$~ ~$
  \includegraphics[totalheight=0.27\textheight]{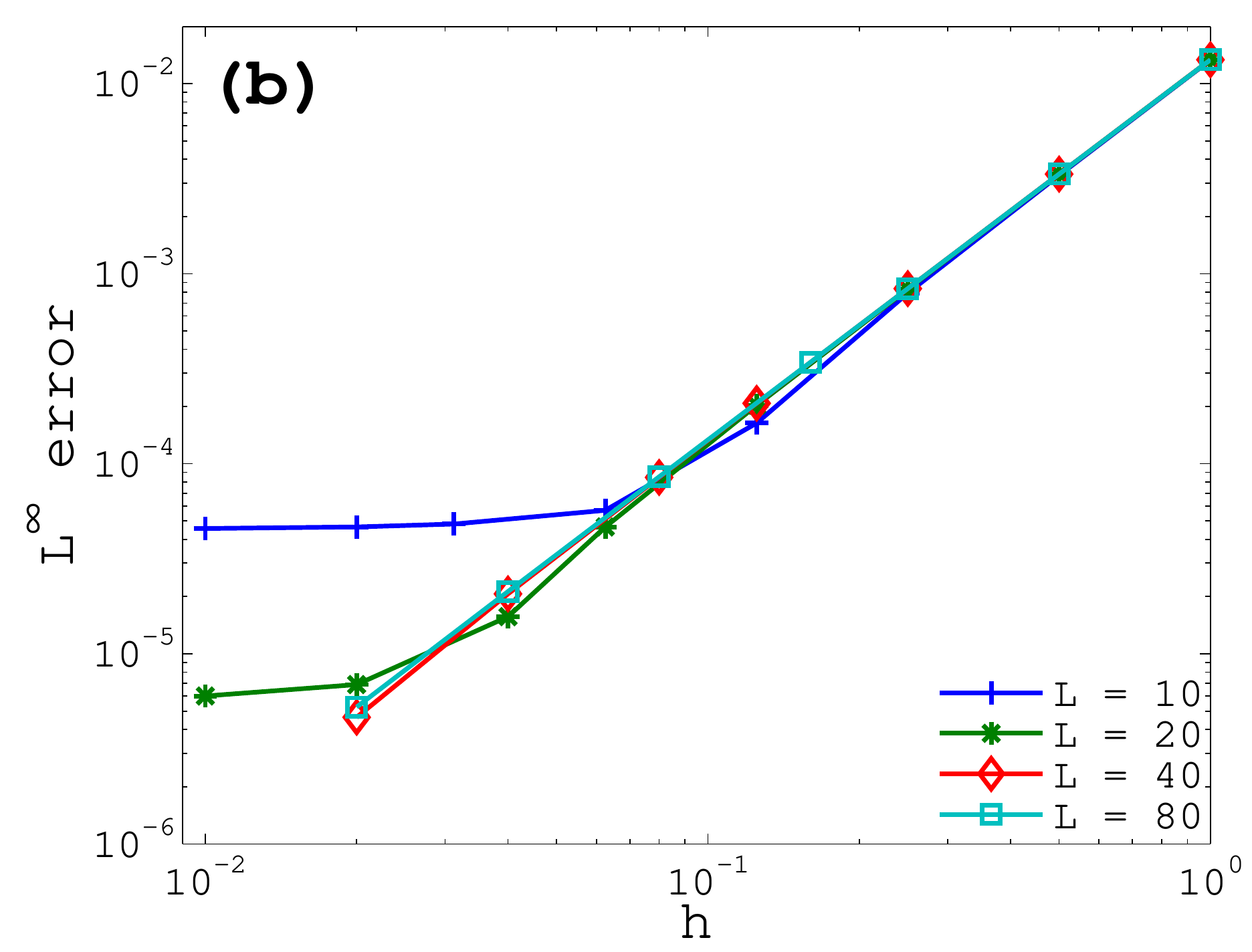}
 \end{center}
\caption{The $L^\infty$ error of the fractional Laplacian of
  $u(x) = (1+x^2)^{-(1-\al)/2}$ with $\alpha=0.4$
using the approximation~\eqref{eq:FFBC}: {\bf (a)}
the spectral weights $w^{SP}$; {\bf (b)} the weights $w^{PER}$.}
\label{fig:FFBC}
\end{figure}

The effectiveness of this the far-field boundary condition is shown
in Figure~\ref{fig:FFBC} on different domain sizes $L$ and grid
sizes $h$, for the function $u(x) = (1+x^2)^{-(1-\al)/2}$ and its Fractional Laplacian
\begin{equation} \label{eq:onedfractest}
(-\Delta)^{\alpha/2} u(x)
=2^\al \Gamma\Big(\frac{1+\al}{2}\Big)\Gamma\Big(\frac{1-\al}{2}\Big)^{-1}(1+x^2)^{-(1+\al)/2}.
\end{equation}
The convergence rates for $w^{SP}$ and $w^{PER}$ shown  in Figure~\ref{fig:FFBC} are very similar
to those in~\cite{HuangOberman1} for $w^Q$: as the grid size $h$ decreases,
the error decreases with the expected rate as summarized in Table~\ref{tab:schemeprop},
then it levels off because its dominant contribution is taken over by
the boundary condition.

\begin{rem}
The error  for $w^{SP}$ seems to be ``spectral'' before saturation, when
$alpha$ is between $0$ and $1$ as in Figure~\ref{fig:FFBC}.
However when $\alpha$ is between $1$ and $2$,
$w_k^{SP}$ scales like $k^{-2}h^{-\alpha}$  with alternating signs.
The approximation~\eqref{eq:farfieldapprox} is no long valid (at least
with respect to the expected accuracy), and the above approach
to the far-field boundary condition is less effective for $w^{SP}$.

\end{rem}

\section{Convergence for equations with fractional Laplacian operators}
\label{sec:conv}

Although the efficient and accurate evaluation of the fractional Laplacian is fundamental,
its practical applications in the numerical solutions of equations with
this operator are equally important.
In this section, we study the convergence of the canonical
extended Dirichlet boundary value problem,
\begin{equation}\label{FLD}\tag{FLD}
\begin{aligned}
\FL u &= f && \text{ for } x \in  \Dd,
\\
u &= g  && \text{ for  } x \in \mathbb{R}^n\setminus\Dd.
\end{aligned}
\end{equation}
Here $\Dd$ is a bounded domain on which the unknown function $u$ is sought and
$g$ is the boundary condition.
The problem \eqref{FLD} is the analogue of the Dirichlet problem for
the Laplace operator, except that, here $g$ is given
 on the complement of the domain $\Dd$, instead of just the boundary $\partial \Dd$.
In the special case $g=0$, $f=1$, the solution $u$  corresponds to
the expected first passage time (or mean exit time) of the
symmetric  $\alpha$-stable L\'{e}vy process from a given domain~\cite{MR0137148}.
When the domain $\Dd$ is a ball about the origin and $f=0$,
the solution can be written explicitly as a potential integral of  $g$
(see \cite[Chapter I]{MR0350027} and
\cite[Section 5.1]{silvestre2007regularity} for more details), extending the
Poisson formula for the Laplace equation to the fractional setting.


Consider~\eqref{FLD} in one dimension with $D=(-1,1)$.
When the scheme~\eqref{FLh} is applied,  the discrete problem becomes
\begin{equation}\label{FLhD}\tag{$\text{FLD}_h$}
\begin{aligned}
\FLh u_j &= f_j && \text{ for } j \in  \Ddh, \\
u_j &= g_j  && \text{ for  } j \in \DdCh,
\end{aligned}
\end{equation}
where $\Ddh = \{j \in \mathbb{Z}: \abs{j h} < 1\}$, $\DdCh = \mathbb{Z}_h\setminus\Ddh$
with $f_j=f(x_j)$ and $g_j=g(x_j)$. We are interested in the
convergence of the numerical solution $u_j$ to the exact solution
$u(x_j)$ on $\Dd_h$, as the grid size $h$
decreases. When different weights are employed in the discrete operator $\FLh$,
an order $O(h^p)$ local truncation error of the residue  $r_j = (-\Delta_h)^{\alpha/2} u(x_j)-f(x_j)$
does not always imply the same order of error $e_j = u(x_j)-u_j$ for the numerical solution;
some form of stability is required. In terms of the errors $r_j$ and $e_j$, the discrete
problem~\eqref{FLhD} becomes
\begin{equation}\label{eq:re}
  (-\Delta_h)^{\alpha/2}e_j = r_j \ \mbox{ for } j \in \Dd_h, \quad
  e_j = 0\ \mbox{ for } j \in \DdCh.
\end{equation}
This can be written as a linear algebra problem $L_h\mathbf{e}_h = \mathbf{r}_h$,
for some matrix $L_h$, vectors $\mathbf{e}_h$ denoting the numerical error $e_j$ with $j\in \Dd_h$
and $\mathbf{r}_h$ denoting the consistence error $r_j$. The stability
(or the equivalence of the two errors) is essentially the boundedness of the inverse
matrix $(L_h)^{-1}$ in suitable norms.

In many cases, the norm $\|(L_h)^{-1}\|$ is difficult to estimate, as the entries of the
matrix $L_h$ depend on the particular weights in the discretization.
Alternatively, the same stability conditions can be established
by constructing discrete supersolutions and applying the discrete maximum principle,
in a similar way as  for classical elliptic equations~\cite{larsson2009partial}.
To process,we have the following lemma.

\begin{lem}[Discrete maximum principle] \label{lem:MP}
Let $u$ defined on $\mathbb{Z}_h$ satisfy $\FLh u_j \le 0$ for $j \in \Ddh$, for
any discretization~\eqref{FLh} with nonnegative weights $w_k$ for $k \neq 0$.  Then
\[
\max_{j \in \Ddh} u_j \le \max_{j \in \DdCh} u_j.
\]
Similarly, if $\FLh u_j \ge 0$ for $j \in \Ddh$, then
\[
\min_{j \in \Ddh} u_j \ge  \min_{j \in \DdCh} u_j.
\]
\end{lem}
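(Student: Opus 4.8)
The plan is to argue by contradiction, exploiting the nonlocal stencil together with the finiteness of the interior grid $\Ddh = \{\,j : |jh| < 1\,\}$. First I would reduce the two assertions to one. Since $\FLh$ is linear, $\FLh(-u)_j = -\FLh u_j$, so $-u$ satisfies $\FLh(-u)_j \ge 0$ on $\Ddh$ exactly when $\FLh u_j \le 0$ there; applying the maximum statement to $-u$ and unwinding signs yields the minimum statement. It therefore suffices to prove $\max_{j \in \Ddh} u_j \le \max_{j \in \DdCh} u_j$ under the hypothesis $\FLh u_j \le 0$ for $j \in \Ddh$.

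Next I would set up the contradiction. Suppose $M := \max_{j \in \Ddh} u_j > \max_{j \in \DdCh} u_j$. Because $\Ddh$ is finite, $M$ is attained at some $j^* \in \Ddh$, and by the assumption $M$ is a strict global maximum in the sense that $u_m < M$ for every $m \in \DdCh$ while $u_m \le M$ for every $m \in \Zh$. At such an interior maximizer I would use the convolution form of the scheme,
\[
 \FLh u_{j^*} = \sum_{k \neq 0} \big(u_{j^*} - u_{j^*-k}\big) w_k .
\]
Since $u_{j^*} \ge u_{j^*-k}$ for all $k$ and $w_k \ge 0$ for $k \neq 0$, every summand is nonnegative, so the series is well defined in $[0,\infty]$ with no a priori decay assumption. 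The hypothesis $\FLh u_{j^*} \le 0$ then forces the sum to vanish termwise, i.e. $u_{j^*-k} = M$ for every index $k$ with $w_k > 0$.

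The crux --- and the step I expect to be the main obstacle --- is propagating this equality out of $\Ddh$ when the weights are only nonnegative rather than strictly positive. Pick any $k_0 \neq 0$ with $w_{k_0} > 0$; such an index exists because the operator is not identically zero, and by symmetry $w_{-k_0} > 0$ as well. The previous paragraph shows $j^* - k_0$ is again a global maximizer. If $j^* - k_0 \in \DdCh$ we already contradict $u_m < M$ on $\DdCh$; otherwise $j^* - k_0 \in \Ddh$, the inequality $\FLh \le 0$ applies there too, and the same argument gives $u_{(j^*-k_0)-k_0} = M$. Iterating, the points $j^* - m k_0$ for $m = 0,1,2,\dots$ are all global maximizers as long as they remain in $\Ddh$; but $\Ddh$ is a fixed finite set, so after finitely many steps the sequence must leave it, producing a point of $\DdCh$ with value $M$ --- the desired contradiction. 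Hence $\max_{\Ddh} u \le \max_{\DdCh} u$.

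Finally I would record the mild nondegeneracy actually used: only that some $w_{k_0} > 0$ (automatic whenever $\FLh \neq 0$) together with the finiteness of $\Ddh$, so merely nonnegative weights suffice. For every scheme of Section~\ref{sec:weights} one in fact has $w_k > 0$ for all $k \neq 0$, in which case the termwise-vanishing step already forces $u \equiv M$ on all of $\Zh$ and the contradiction is immediate; the propagation argument is needed only to show that strict positivity of all weights is not essential, a gap that the finiteness of the interior closes.
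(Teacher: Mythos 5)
Your proof is correct. For comparison purposes: the paper states Lemma~\ref{lem:MP} \emph{without any proof} --- it is invoked as a standard fact en route to the supersolution/stability argument --- so your write-up supplies the missing argument, and it is the natural one. The two ingredients you isolate are exactly right: at an interior global maximizer $j^*$ every term $(u_{j^*}-u_{j^*-k})w_k$ is nonnegative, so the hypothesis $\FLh u_{j^*}\le 0$ forces termwise vanishing; and equality then propagates along any fixed direction $k_0$ with $w_{k_0}>0$, where strict monotonicity of the orbit $j^*-mk_0$ together with the finiteness of $\Ddh$ forces an exit into $\DdCh$ after finitely many steps, contradicting $u<M$ there. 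The propagation step is genuinely needed under mere nonnegativity (think of weights supported only on even $k$), and your handling of it correctly avoids assuming strict positivity of all weights. Two caveats are worth recording explicitly: first, as literally stated the lemma fails for the degenerate choice $w_k=0$ for all $k\neq 0$ (then $\FLh\equiv 0$ and every $u$ satisfies the hypothesis), so your nondegeneracy assumption that some $w_{k_0}>0$ is not cosmetic, though it holds for every scheme of Section~\ref{sec:weights}; second, since $\DdCh$ is infinite, $\max_{j\in\DdCh}u_j$ must be read as a supremum, but your argument uses only that $u_m<M$ for every $m\in\DdCh$, so it is unaffected (and the claim is vacuous when that supremum is $+\infty$).
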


Next we would like to construct discrete supersolutions $v$ on $\mathbb{Z}_h$, which satisfies
\begin{equation}\label{vbarrier}
\begin{aligned}
\FLh v_j &\ge  1, && \text{ for } j \in  \Ddh,
\\
v_j &\geq 0,  && \text{ for  } j \in \DdCh.
\end{aligned}
\end{equation}
Applying Lemma~\ref{lem:MP}, $v$ is automatically non-negative on $\Dd_h$.
If such a supersolution $v$ exists on $\mathbb{Z}_h$, the discrete function
$p$ defined as $p_j=\|\mathbf{r}_h\|_\infty
v_j\pm e_j$ satisfies $(-\Delta_h)^{\alpha/2}p_j \geq 0$ for $j \in \Ddh$ and
$p_j \geq 0$ for $j\in \DdCh$. Therefore, $p$ is non-negative on $\mathbb{Z}_h$, and
the numerical error $|e_j|$ is bounded by $\|\mathbf{r}_h\|_\infty\|v\|_\infty$,
showing the same order
of local truncation error $\mathbf{r}_h$ and numerical error $\mathbf{e}_h$.
For classical elliptic equations, the supersolutions (either continuous or
discrete) can usually be chosen as an inverted parabola, but it is more difficult
for the equations with the fractional Laplacian.
In~\cite{HuangOberman1}, the following discrete supersolution
\[
v_j =
\begin{cases}
4- (jh)^2, & \abs{jh} < 1\\
0, & \text{ otherwise},
\end{cases}
\]
is chosen (associated with the weights $w^{Q}$), and the conditions~\eqref{vbarrier} is verified with
some technical assumptions. When other weights are used in the scheme~\eqref{FLh},
\eqref{vbarrier} may be difficult to verify analytically for a specific choice of supersolutions.
Instead, we look for motivation from the continuous supersolution $v_G$, which satisfies
\[
(-\Delta)^{\alpha/2}v_G(x) = 1, \quad x \in (-1,1),
\]
and $v_G(x)\equiv 0$ for $|x|\geq 1$, by showing numerically that $v_{Gj} = v_G(x_j)$
is a discrete supersolution. This function $v_G$ is the first passage time from the unit
ball for particles under symmetric $\alpha$-stable process~\cite{MR0137148}
and is explicitly given by
\[
  \qquad \qquad v_G(x) = K_\alpha (1-|x|^2)^{\alpha/2}_+,\qquad
  K_\alpha = \frac{2^\alpha}{\sqrt{\pi}}\Gamma\left(1+\frac{\alpha}{2}\right)
  \Gamma\left(\frac{1+\alpha}{2}\right).
\]

\begin{figure}[htp]
  \begin{center}
    \includegraphics[totalheight=0.26\textheight]{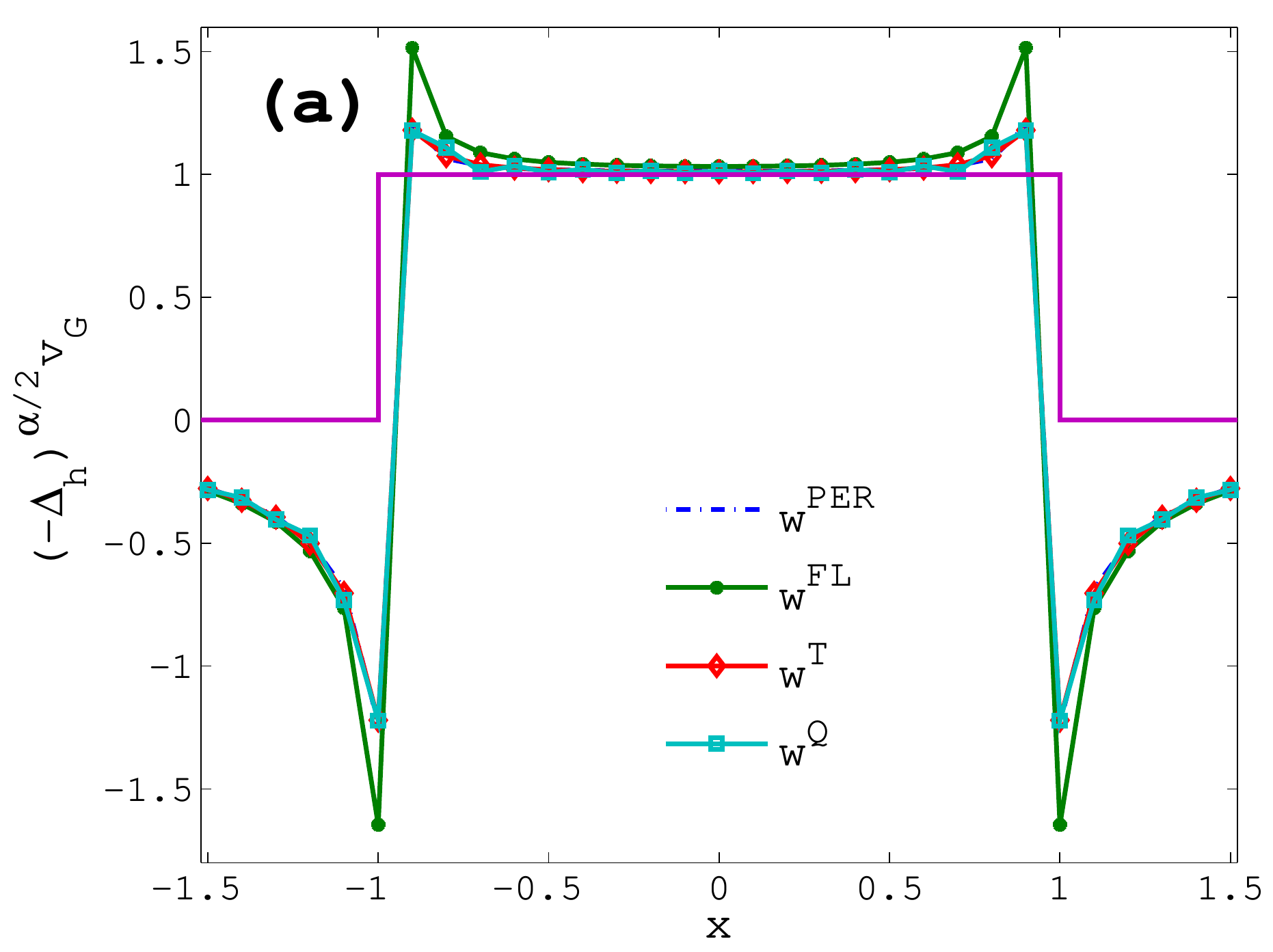}$~$
    \includegraphics[totalheight=0.26\textheight]{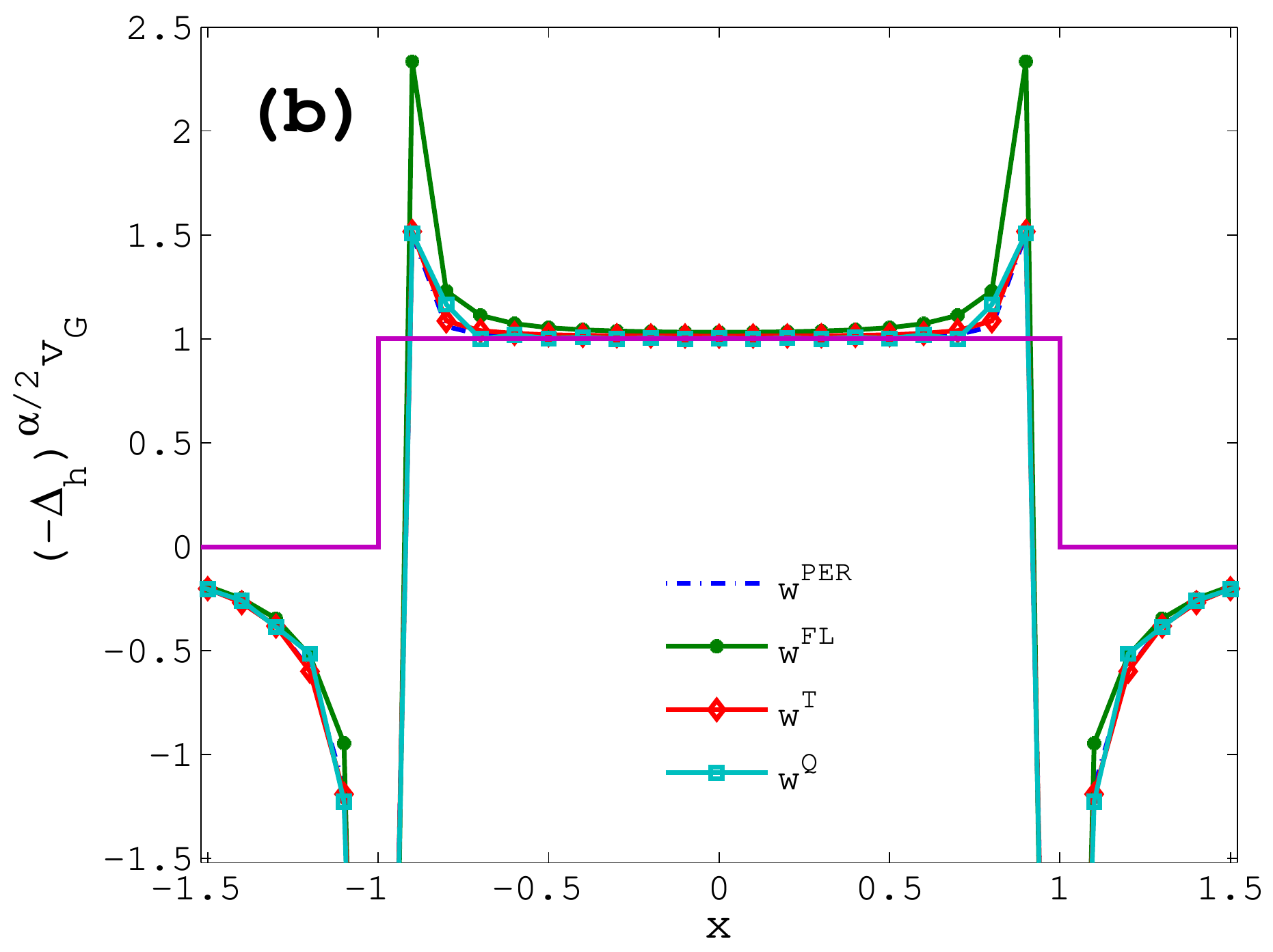}
  \end{center}
  \caption{The (discrete) fractional Laplacian of $v_G$ for various schemes:
    (a) $\alpha=0.5$  and (b) $\alpha=1.5$, suggesting that $v_G$ is a discrete
  supersolution for all $\alpha \in (0,2)$ and all non-negative weights considered in
this paper.}
  \label{fig:flvg}
\end{figure}
The discrete fractional Laplacian of $v_G$ is shown in Figigure~\ref{fig:flvg}
for various scheme with $\alpha=0.5$ and $\alpha = 1.5$. It seems that~\eqref{vbarrier}
is always satisfied, and $v_G$ is a valid supersolution with the associated weights~\footnote{
  It is important to make sure that the boundary $\pm 1$ lies also on the boundary
$\partial D_h^C$, i.e., $1/h$ is an integer.
}.

\section{Numerical experiments and applications to nonlinear PDEs}\label{sec:nonlinear}

In this section, we provide several numerical examples, to further verify the
accuracy of the scheme with different weights, especially the relation between
the order of convergence and the regularity of the functions,
and to showcase the applications in various PDEs.

\subsection{Accuracy when the solutions are non-smooth}
The order of accuracy of the schemes derived in
Section~\ref{sec:weightsFourier} is valid only for smooth functions,
such that the integral $\int_{|\xi|>\pi/h} |\xi|^{\alpha} e^{i\xi x_j}
\hat{u}(\xi)\mathrm{d}x$ can be safely ignored. For less smooth
functions, the actual convergence rate can be lower as we show now.
The examples used in this and the next subsection re related to the following result~\cite{MR3294409,MR3239623}
\begin{equation}\label{eq:expfraceq}
 (-\Delta)^{\frac{\alpha}{2}}(1-x^2)_+^{k+\frac{\alpha}{2}}
=\begin{cases}
  K_{k,\alpha}\ {}_2F_1\left(\frac{1+\alpha}{2},-k;\frac{1}{2};x^2\right),\qquad & |x|<1,\cr
\tilde{K}_{k,\alpha}\ {}_2F_1\left(\frac{1+\alpha}{2},\frac{2+\alpha}{2};\frac{3+\alpha}{2}+k;\frac{1}{x^2}\right), & |x|>1,
 \end{cases}
\end{equation}
where $k$ is an integer,
\[
 K_{k,\alpha} = \frac{2^\alpha \Gamma\left(k+1+\frac{\alpha}{2}\right)\Gamma\left(\frac{1+\alpha}{2}\right)}
{k!\Gamma\left(\frac{1}{2}\right)},\quad
\tilde{K}_{k,\alpha} =
\frac{2^\alpha \Gamma\left(k+1+\frac{\alpha}{2}\right)\Gamma\left(\frac{1+\alpha}{2}\right)}
{\Gamma(-\frac{\alpha}{2})\Gamma(\frac{3+\alpha}{2}+k)},
\]
and ${}_2F_1$ is the Gauss hypergeometric function~\cite{MR1225604}.
Since $(1-x^2)_+^{k+\frac{\alpha}{2}}$ is supported on the interval $[-1,1]$
for any $k\geq 0$ and $\alpha \in (0,2)$, the sum in the scheme~\eqref{FLh}
has only finite number of terms and can be truncated appropriately.

\begin{figure}[htp]
\begin{center}
  \includegraphics[totalheight=0.27\textheight]{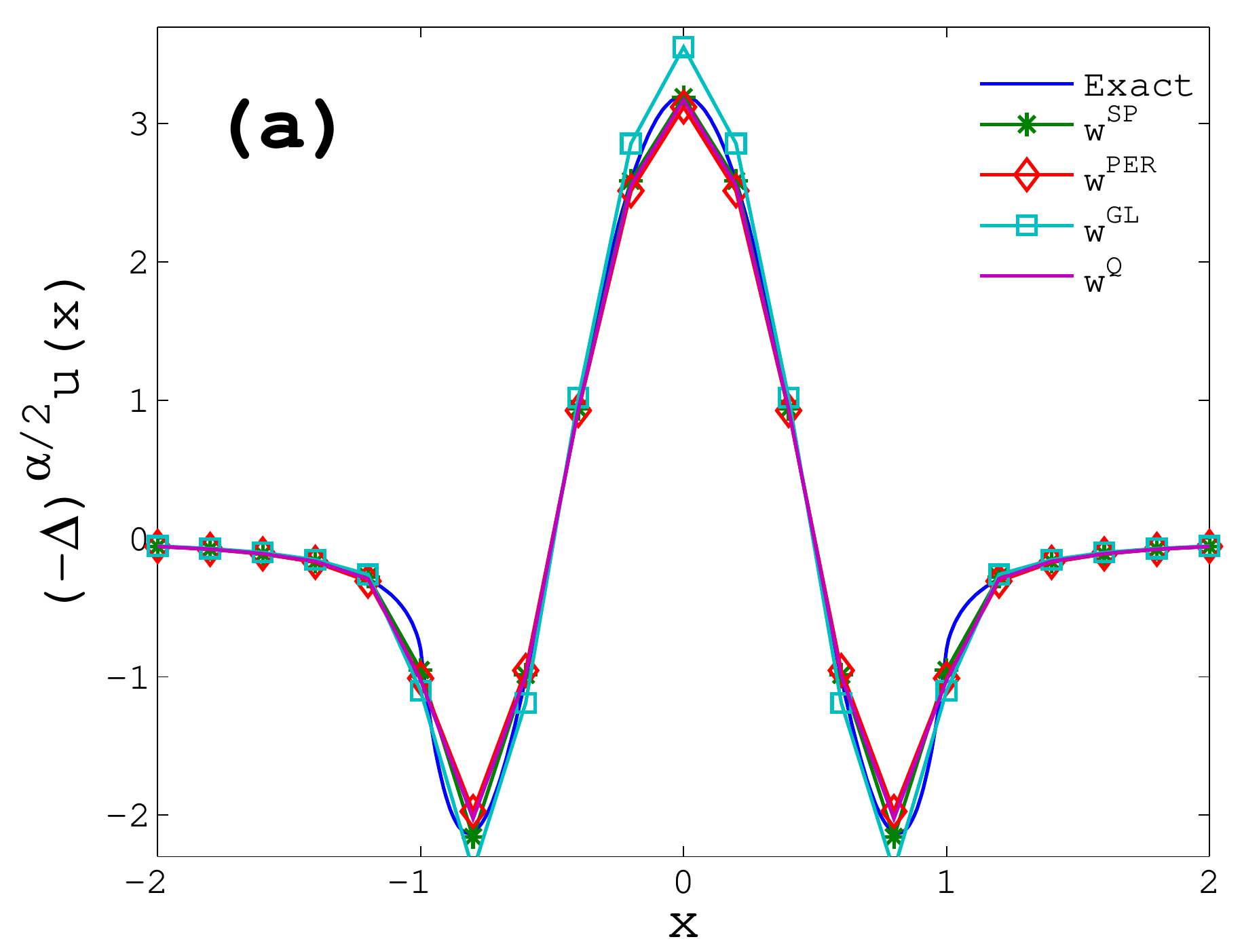}
  $~~$
  \includegraphics[totalheight=0.27\textheight]{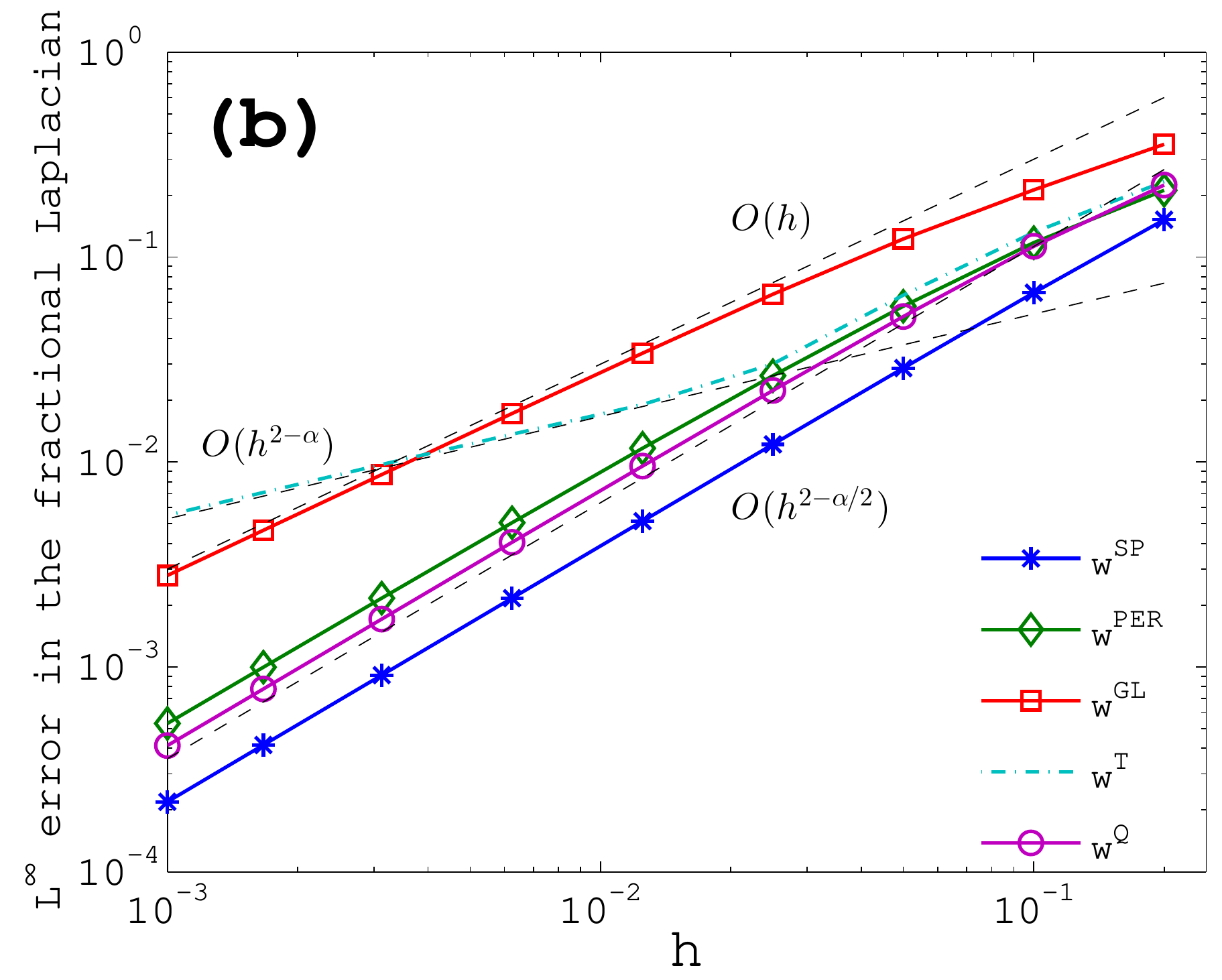}
\end{center}
\caption{ The fractional Laplacian of $u(x)=(1-x^2)_+^{2+\alpha/2}$
with $\alpha=1.5$: {\bf (a)} the result a grid with spacing $h=0.2$;
{\bf (b)} the order of convergence (measured in $L^\infty$ norm) with different $h$.}
\label{fig:redord}
\end{figure}

The fractional Laplacian of $u(x) = (1-x^2)^{2+\alpha/2}_+$ is computed with different weights,
and the results with a grid size $h=0.2$ and the converge rates are
shown in Figure~\ref{fig:redord}. While lower order schemes with
weights $w^{GL}$ and $w^{T}$ exhibit the expected order of convergence derived
in Section~\ref{sec:weightsFourier}, higher order schemes with other weights
have the same less optimal convergence rate $O(h^{2-\alpha/2})$. This
rate can be explained from the asymptotic expansion of
$\int_{|\xi|>\pi/h}|\xi|^{\alpha} e^{i\xi x_j}\hat{u}(\xi)\mathrm{d}\xi$,
the source of the dominant error. In fact, the
Fourier transform of $u(x) = (1-x^2)_+^{k+\alpha/2}$ can be explicitly expressed as
\[
  \hat{u}(\xi)=  \int_{-\infty}^\infty (1-x^2)_+^{k+\alpha/2}e^{-ix\xi}\mathrm{d}x=
  F_{k,\alpha}|\xi|^{-k-(\alpha+1)/2}J_{k+(\alpha+1)/2}(|\xi|),
\]
in terms of the Bessel function $J_\nu$ and the constant factor
\[
  F_{k,\alpha} = \sqrt{\pi}\Gamma\left(k+1+\frac{\alpha}{2}\right)2^{k+(\alpha+1)/2}.
\]
At $x_j=0$, the integral $\int_{|\xi|>\pi/h}|\xi|^{\alpha} e^{i\xi x_j}\hat{u}(\xi)d\xi$
is bounded by
\[
  F_{k,\alpha}\int_{|\xi|>\pi/h}
    |\xi|^{-k+(\alpha-1)/2}\left|J_{k+(\alpha+1)/2}(|\xi|)\right|\mathrm{d}x
    \sim  \int_{|\xi|>\pi/h} |\xi|^{-k+\alpha/2-1}\mathrm{d}\xi
    =O(h^{k-\alpha/2}),
\]
using asymptotic form $J_\mu(|\xi|)\sim \sqrt{\frac{2}{\pi |\xi|}} \cos (|\xi|-\frac{\alpha\pi}{2}
-\frac{\pi}{4})$ as $|\xi|$ goes to infinity.
The dominant order $O(h^{2-\alpha/2})$ error in Figure~\ref{fig:redord}(b)
corresponds to the choice $k=2$ in the function $u(x) = (1-x^2)^{2+\alpha/2}$.
As a general rule, if a function $u$ is
in H\"{o}lder space $C^{k,\beta}$ with optimal exponent $\beta$,
then its discrete fractional Laplacian $(-\Delta)^{\alpha/2}u$
has an order of accuracy at most $k+\beta-\alpha$.

\subsection{The extended Dirichlet problem}
We now check the accuracy of the scheme
when it is applied to the extended Dirichlet problem~\eqref{FLD},
by choosing $D=(-1,1)$, $f(x) = K_{k,\alpha}\ {}_2F_1\left(\frac{1+\alpha}{2},-k;
\frac{1}{2};x^2\right)$ on $D$ and $g \equiv 0$ on $D^C$. From~\eqref{eq:expfraceq},
the exact solution is $u(x) = (1-x^2)_+^{k+\alpha/2}$. The convergence rates
with various weights are shown in Figure~\ref{fig:extDiri}, for $k=1$ and $k=3$
respectively. Because the solution is not smooth near the boundary $x=\pm 1$,
the order of accuracy could be lower than the theoretical one. In general,
if the solution is in the H\"{o}lder space $C^{k,\beta}$ with
optimal exponent $\beta$, then the optimal order of convergence is $k+\beta$,
even if a more accuracy scheme is used.

\begin{figure}[htp]
 \begin{center}
\includegraphics[totalheight=0.27\textheight]{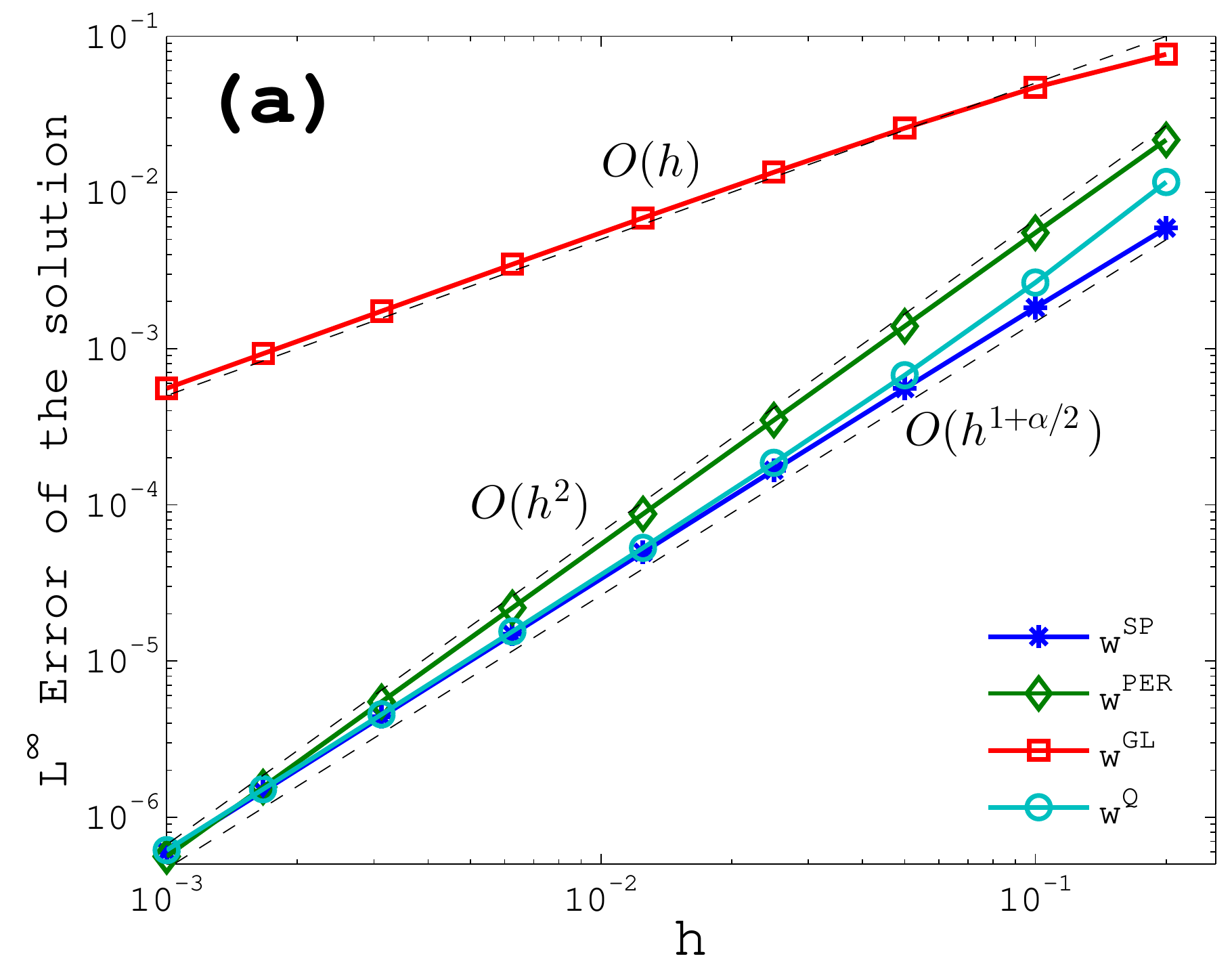}
$~ ~$
\includegraphics[totalheight=0.27\textheight]{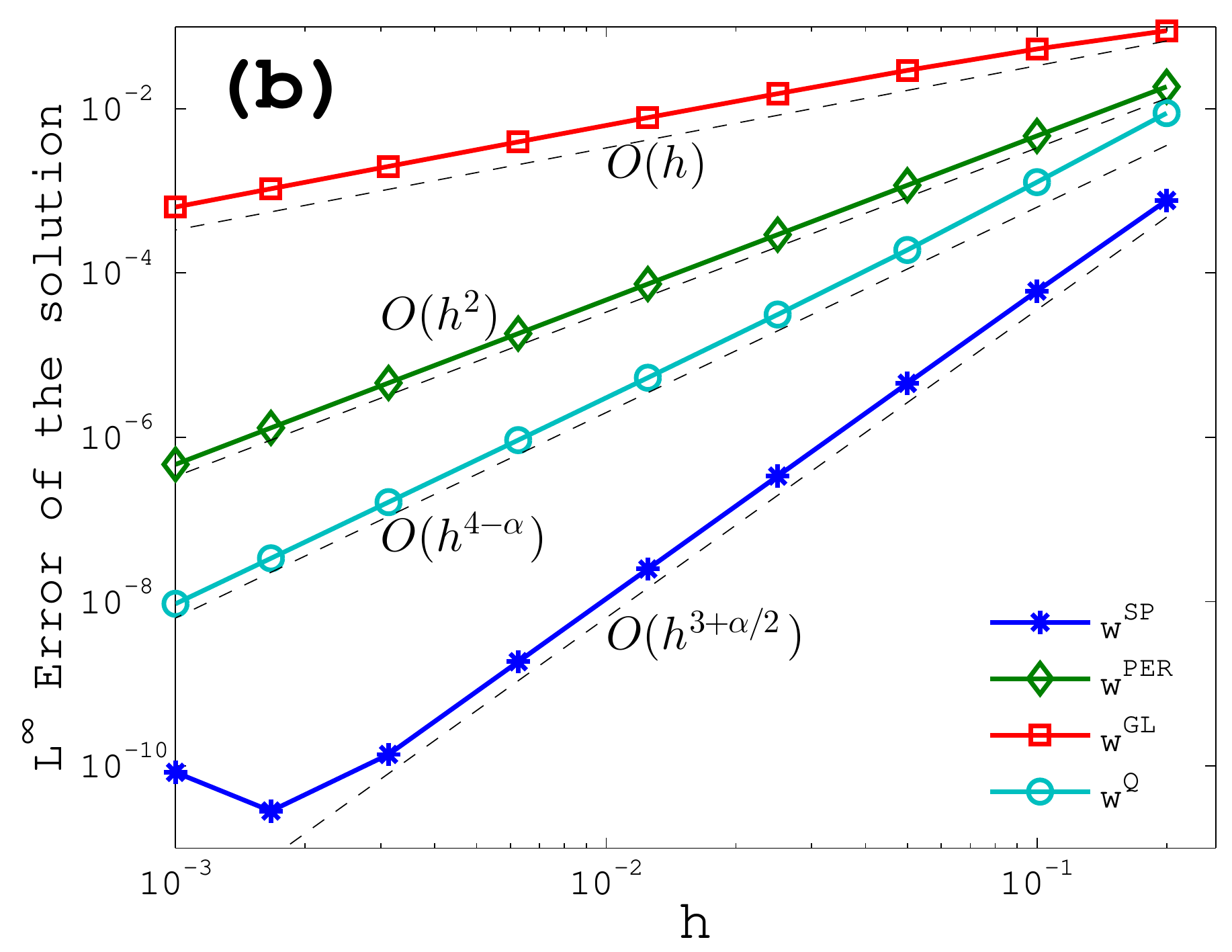}
 \end{center}
\caption{The convergence of the solution (measured in $L^\infty$
    error) for the extended  Dirichlet problem~\eqref{FLD} corresponding to the
    relation~\eqref{eq:expfraceq} (with $\alpha=1.5$): {\bf (a)} $k=1$ and {\bf (b)} $k=3$.
Because of the non-smoothness of the solution,
the order of accuracy is at most $k+\alpha/2$, even for higher order schemes.
  }
\label{fig:extDiri}
\end{figure}

\subsection{Fractional heat equation} We now move to the numerical solutions of
evolution equations with the fractional Laplacian operator, starting from the simplest
fractional  heat equation
\begin{equation}\label{eq:fracheat}
 u_t +\FL u=0,\qquad u(x,0) = u_0(x).
\end{equation}
Here $u$ is usually the probability distribution function
of particles undergoing $\alpha$-stable processes,
analogous to that in the heat equation governed by Brownian motion.
The solution to ~\eqref{eq:fracheat} can also be explicitly written as
\[
  u(x,t) = \int_{\mathbb{R}^n} G_\alpha(x-y,t)u_0(y)dy,
\]
where $G_\alpha(x,t)$ is the Green's function with Fourier transform
$\hat{G}_{\alpha}(k,t)=\exp(-|\xi|^{\alpha}t)$.



\begin{figure}[htp]
 \begin{center}
  \includegraphics[totalheight=0.28\textheight]{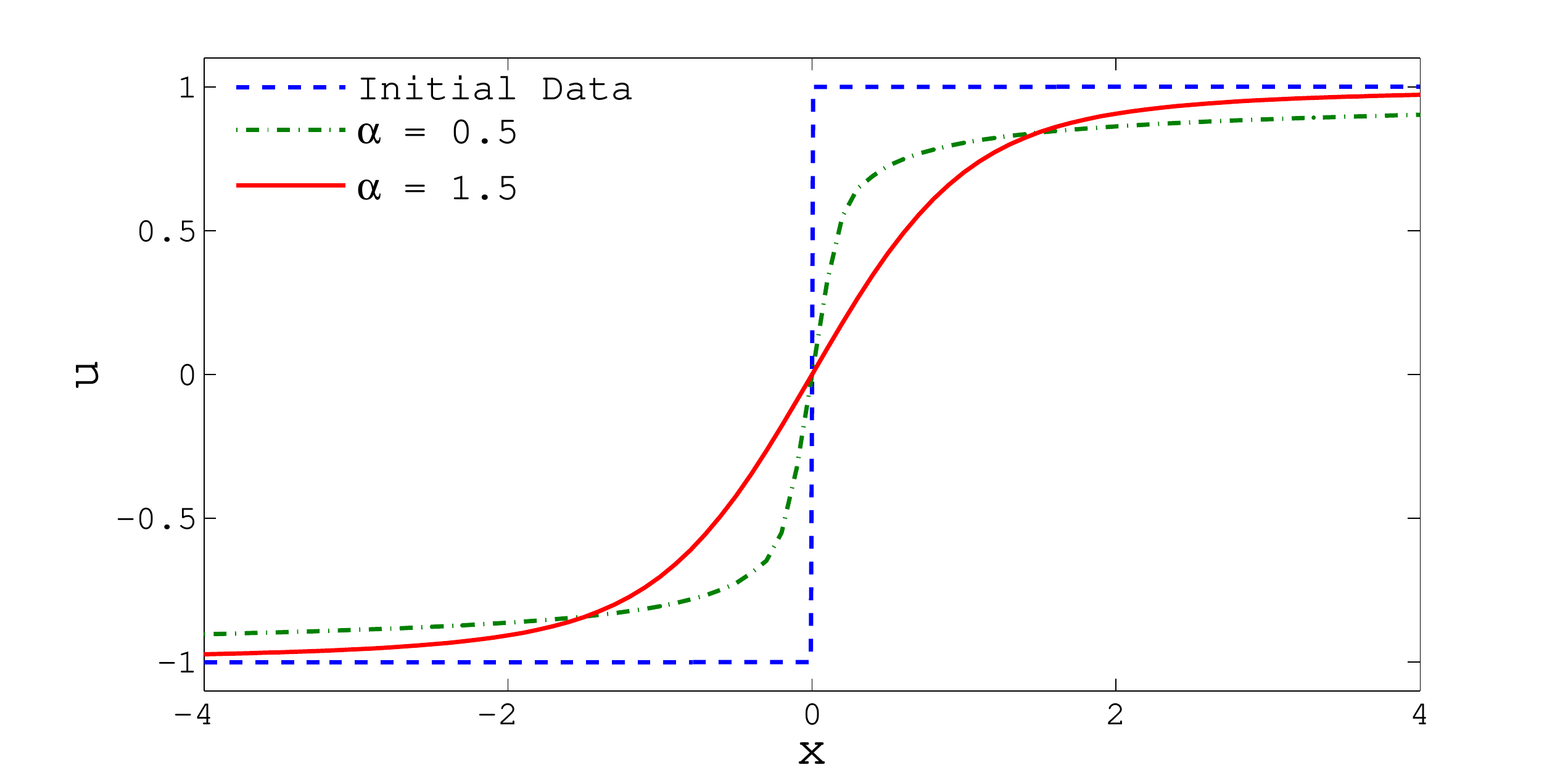}
 \end{center}
\caption{The spreading of the sign function under fractional diffusion
  equation~\eqref{eq:fracheat} at time $t=0.5$, for $\al=0.5$ and $\al=1.5$
  respectively.}
\label{fig:sngheat}
\end{figure}

Starting from the sign function $u_0(x)=\mbox{sign}(x)$ ,
the solution at $t=0.5$ is shown in Figure~\ref{fig:sngheat},
for $\alpha=0.5$ and $\alpha=1.5$, respectively.
The solution is computed with weight $w^{PER}$ on the domain $[-10,10]$, with grid size $h=0.1$ and
time step $\Delta t=0.01$ (for $\al=0.5$) and $\Delta t=0.0032$ (for $\al=1.5$). The treatment
of the far field boundary conditions $u(\pm \infty) = \pm  1$ is adapted from that in
Section~\ref{sec:truncation}, using the fact that $u(x)\pm 1 \sim |x|^{-\alpha}$
as $x\to \pm \infty$. While both solutions spread in space,
the larger $\al$ is, the smoother the solution becomes near its initial discontinuity, and
the closer the solution stays to its boundary condition.


\subsection{Fractal Burgers Equation}
In this subsection, several numerical solutions of the fractional Burgers
equation~\cite{MR1637513,MR2227237}
\begin{equation}\label{eq:fracBurgers}
     u_t + \partial_x\left(\frac{u^2}{2}\right) + \kappa(-\partial_{xx})^{\al/2}u=0
\end{equation}
are demonstrated. This equation (often called fractal Burgers equation in literature)
 is one of the most well-studied conservation laws with fractional Laplacian~\cite{MR1637513}.
 Many properties shared by general fractional  conservation laws can be understood through
 this simple equation. Depending on the initial  data and the exponent $\alpha$,
 general behaviours of solutions to~\eqref{eq:fracBurgers} are different.
 For increasing initial data with constant far field (say zero),
 the long time asymptotics is described by the inviscid Burgers equation for
 $\alpha \in (1,2]$~(see \cite{MR2377288}), but is governed by the fractional heat equation
 for $\alpha \in (0,1)$~(see \cite{MR2607346}). For decreasing initial data
 with constant far field and $\alpha \in (0,1)$, the solution can develop shocks under different conditions~\cite{MR2339805,MR1637513,MR2514389,MR2455893}.

Let $F_{j+1/2}^n$ be the numerical flux corresponding to $u^2/2$ at space $x_{j+1/2}$ and time
$t_n$, then a straightforward explicit discretization of~\eqref{eq:fracBurgers} is
\[
 \frac{u^{n+1}_j-u^n_j}{\Delta t} + \frac{F_{j+1/2}^n-F_{j-1/2}^n}{\Delta x}
+ \nu \sum_{k=-\infty}^\infty (u_j^n-u_k^n)w_{j-k}=0.
\]

%
%

For the initial condition $u_0(x)=\mbox{sign}(x)$, the solution
is a rarefaction wave in the inviscid case ($\kappa=0$), and is smoother in the
presence of fractional diffusion, as shown in Figure~\ref{fig:rareburgers}.
The computation domain is the interval $[-10,10]$ with grid size $h=0.1$ and
time step $0.1h^{\alpha}$, using the weights $w^{PER}$.
Despite the nonlinear term $(u^2/2)_x$ in~\eqref{eq:fracBurgers},
the solution has the same asymptotic far field boundary condition
$u(x)\pm 1 \sim |x|^{-\alpha}$ as $x\to \pm \infty$.

\begin{figure}[htp]
  \begin{center}
    \includegraphics[totalheight=0.25\textheight]{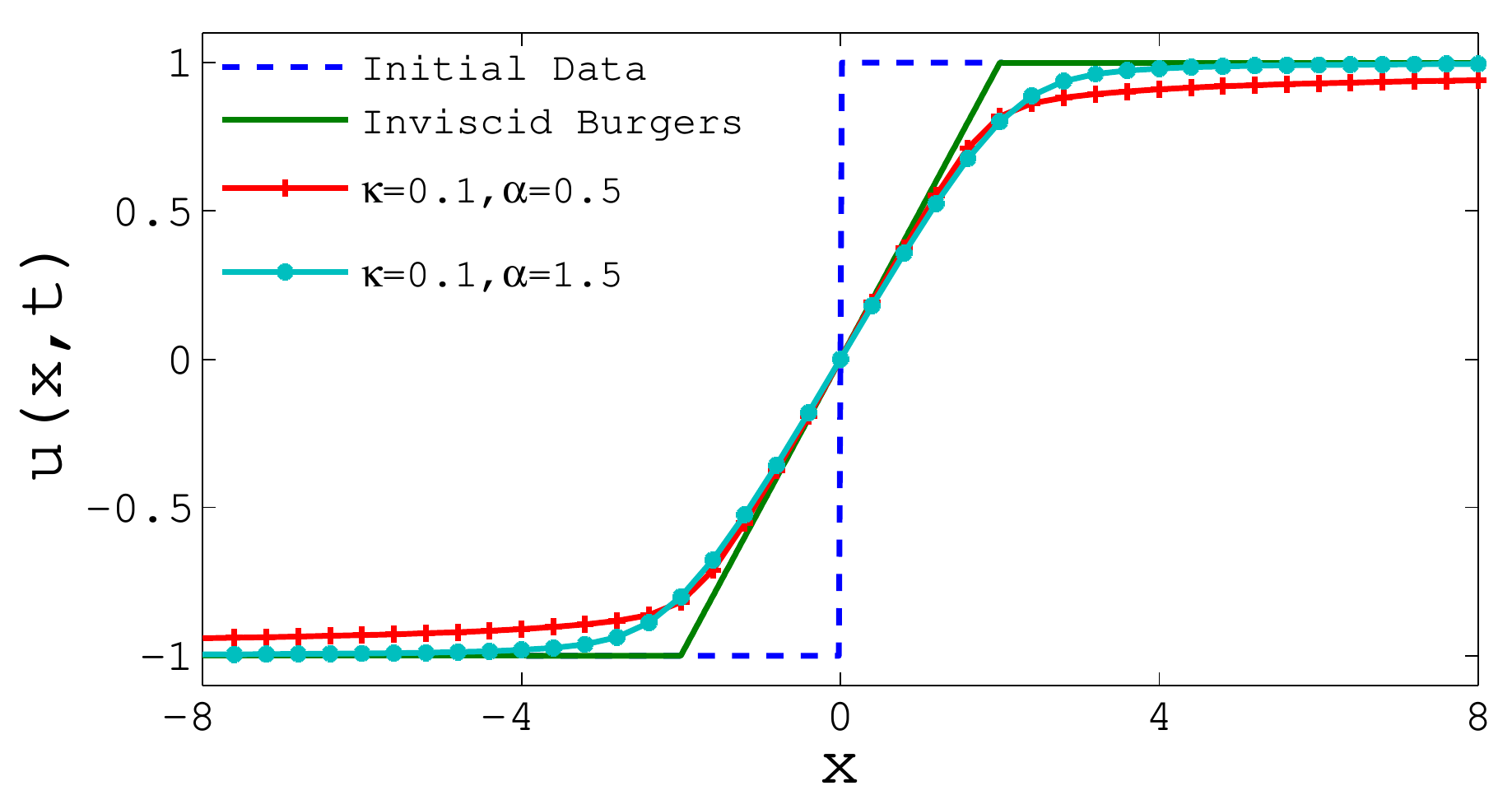}
  \end{center}
  \caption{The solution to the fractal Burgers equation~\eqref{eq:fracBurgers} at $t=2$, compared with
the rarefaction wave governed by the inviscid Burgers equation.}
\label{fig:rareburgers}
\end{figure}

When the initial data $u_0(x)=-\mbox{sign}(x)$ is chosen,
$u(x,t)=u_0(x)$ is a stationary shock in the inviscid case.
When the fractional diffusion is turned on, the solution may become
continuous when $\alpha$ is larger than one (see Figure~\ref{fig:Burgers_alpha})
or when the coefficient $\kappa$ is large enough (see Figure~\ref{fig:Burgers_kappa}).
Although these numerical evidences are not conclusive for the existence of the shocks,
especially when the profile could be poorly resolve, they are consistent
with limited existing results~\cite{MR2339805,MR1637513,MR2514389,MR2455893}.

\begin{figure}[htp]
    \begin{center}
        \includegraphics[totalheight=0.27\textheight]{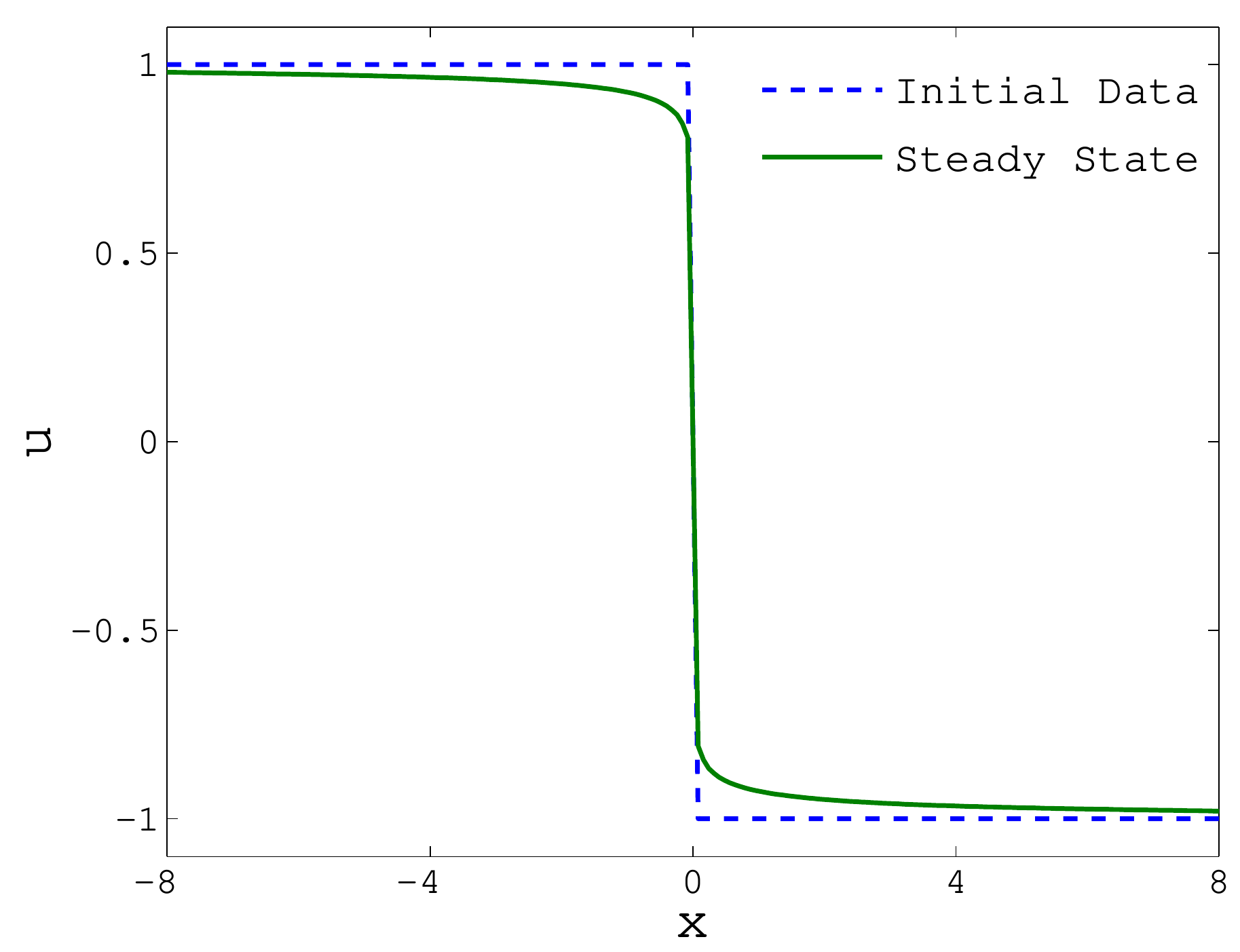} $~$
        \includegraphics[totalheight=0.27\textheight]{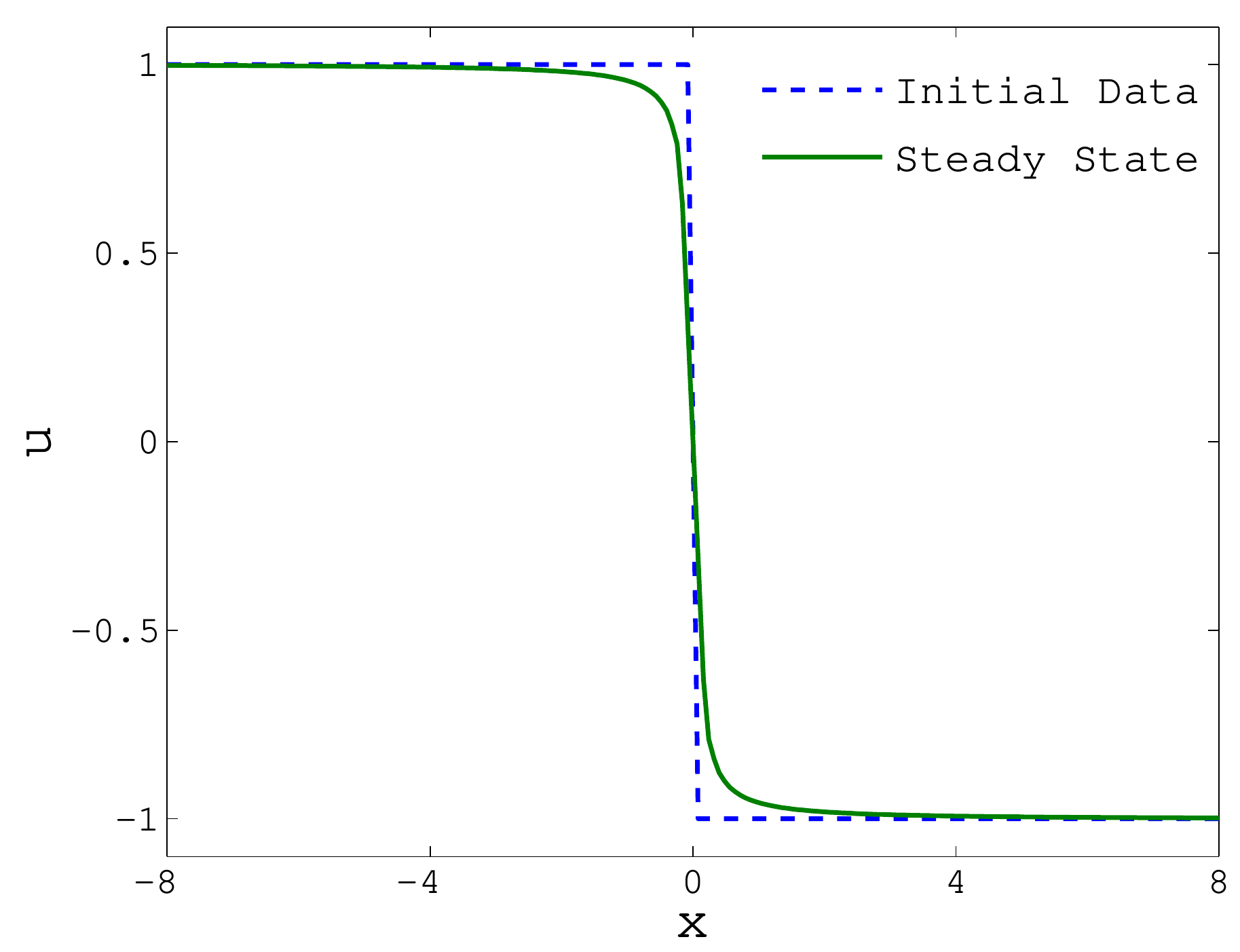}
    \end{center}
    \caption{The steady state of the fractal Burgers equation~\eqref{eq:fracBurgers} ($\kappa=1$)
    with initial data $u_0(x)=-\mbox{sign}(x)$ for $\al = 0.8$ (left) and $\al=1.2$ (right),
    respectively.}
    \label{fig:Burgers_alpha}
\end{figure}

\begin{figure}[htp]
    \begin{center}
        \includegraphics[totalheight=0.27\textheight]{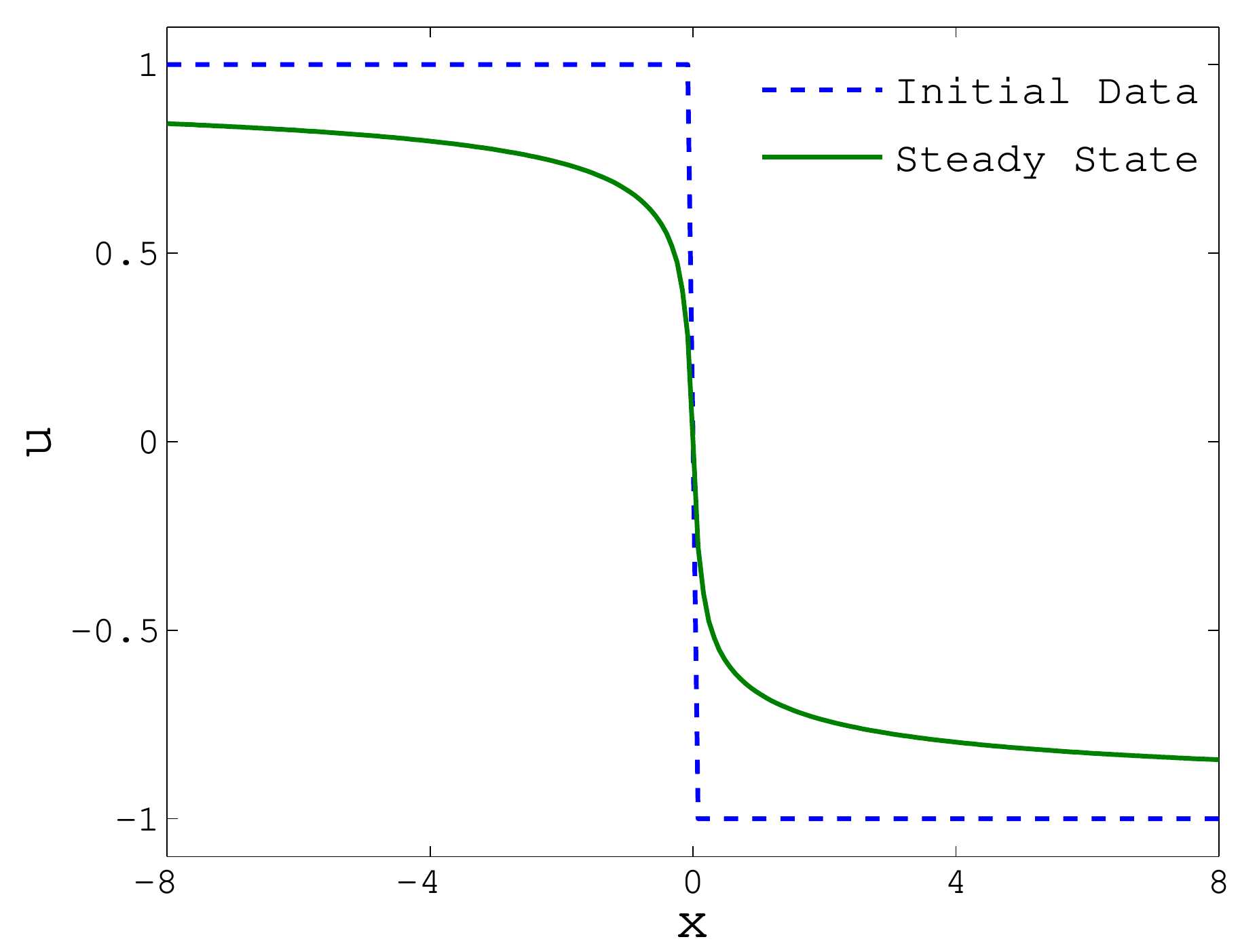} $~$
        \includegraphics[totalheight=0.27\textheight]{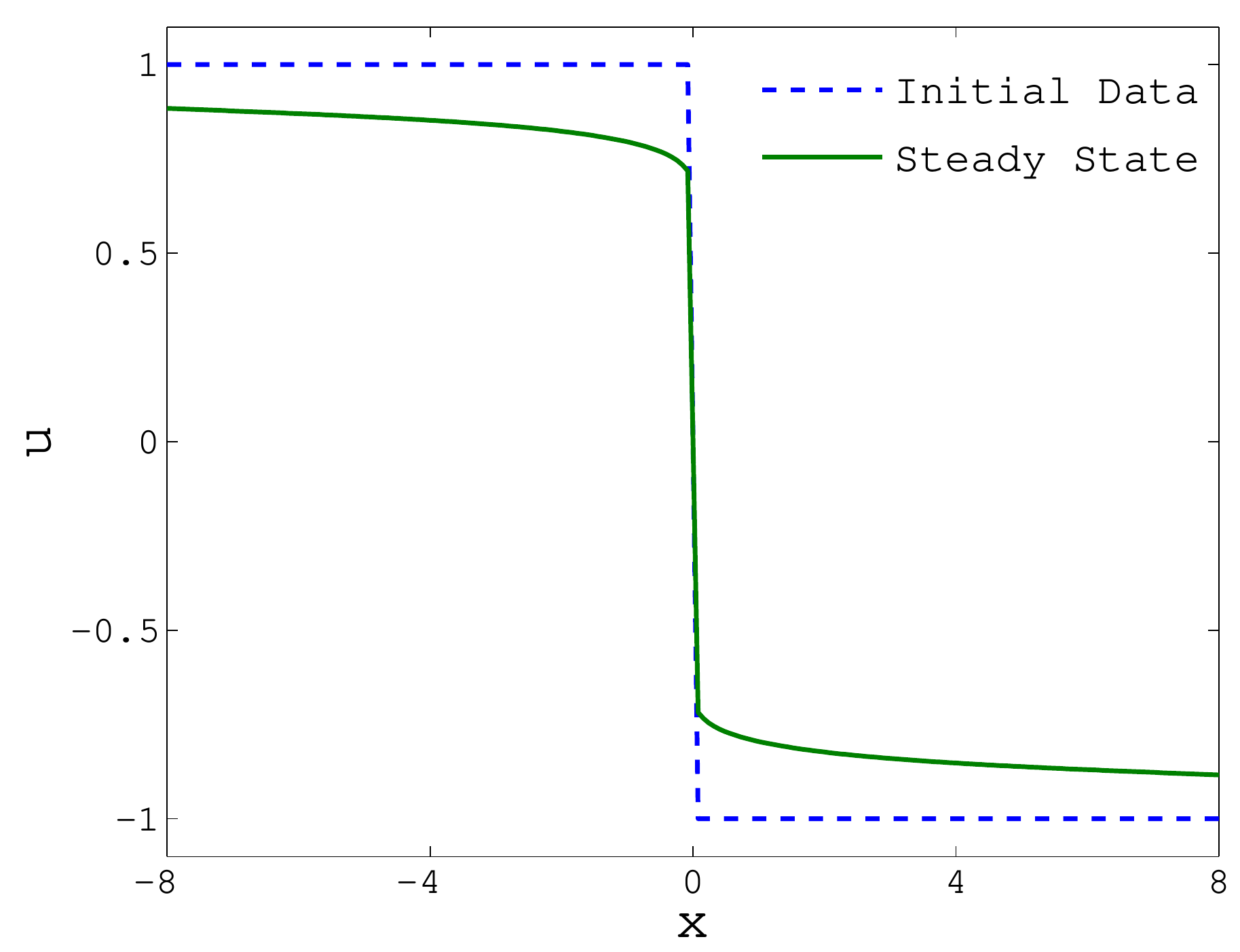}
    \end{center}
    \caption{The steady state of the fractal Burgers equation~\eqref{eq:fracBurgers}
    ($\al=0.4$) with initial data $u_0(x)=-\mbox{sign}(x)$ for $\kappa = 1.0$ (left)
    and $\kappa=0.1$ (right), respectively.
    The steady state seems continuous  when $\kappa$ is relatively large but becomes
``discontinuous'' when $\kappa$ is small.}
\label{fig:Burgers_kappa}
\end{figure}

For other integrable initial data with zero boundary condition at infinity, similar features
like smoothing of negative slopes and steepening of positive slopes are expected. For the
initial condition
\begin{equation}\label{eq:burginit}
    u_0(x) = \begin{cases}
        \cos x,\qquad &|x|\leq \pi/2,\cr
        0,& \mbox{otherwise},
    \end{cases}
\end{equation}
the solutions at $t=4$ and $t=8$ are shown in Figure~\ref{fig:sinsteep}.
When $\al<1$ and $\kappa$ is small, a discontinuity in the solution is expected~\cite{MR2339805},
although the precise conditions for the occurrence of  shocks are yet unknown.

\begin{figure}[htp]
    \begin{center}
        \includegraphics[totalheight=0.27\textheight]{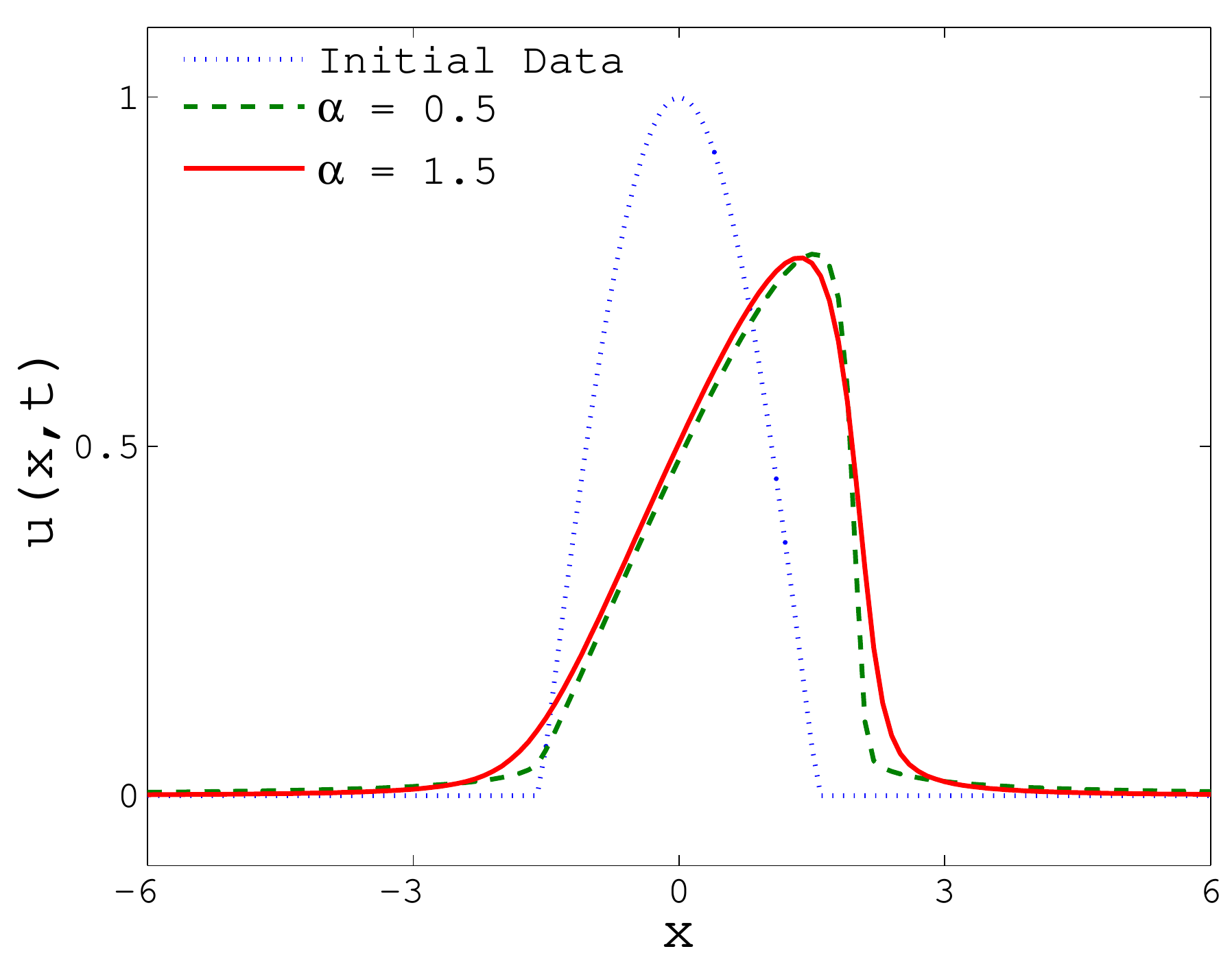} $~~$
        \includegraphics[totalheight=0.27\textheight]{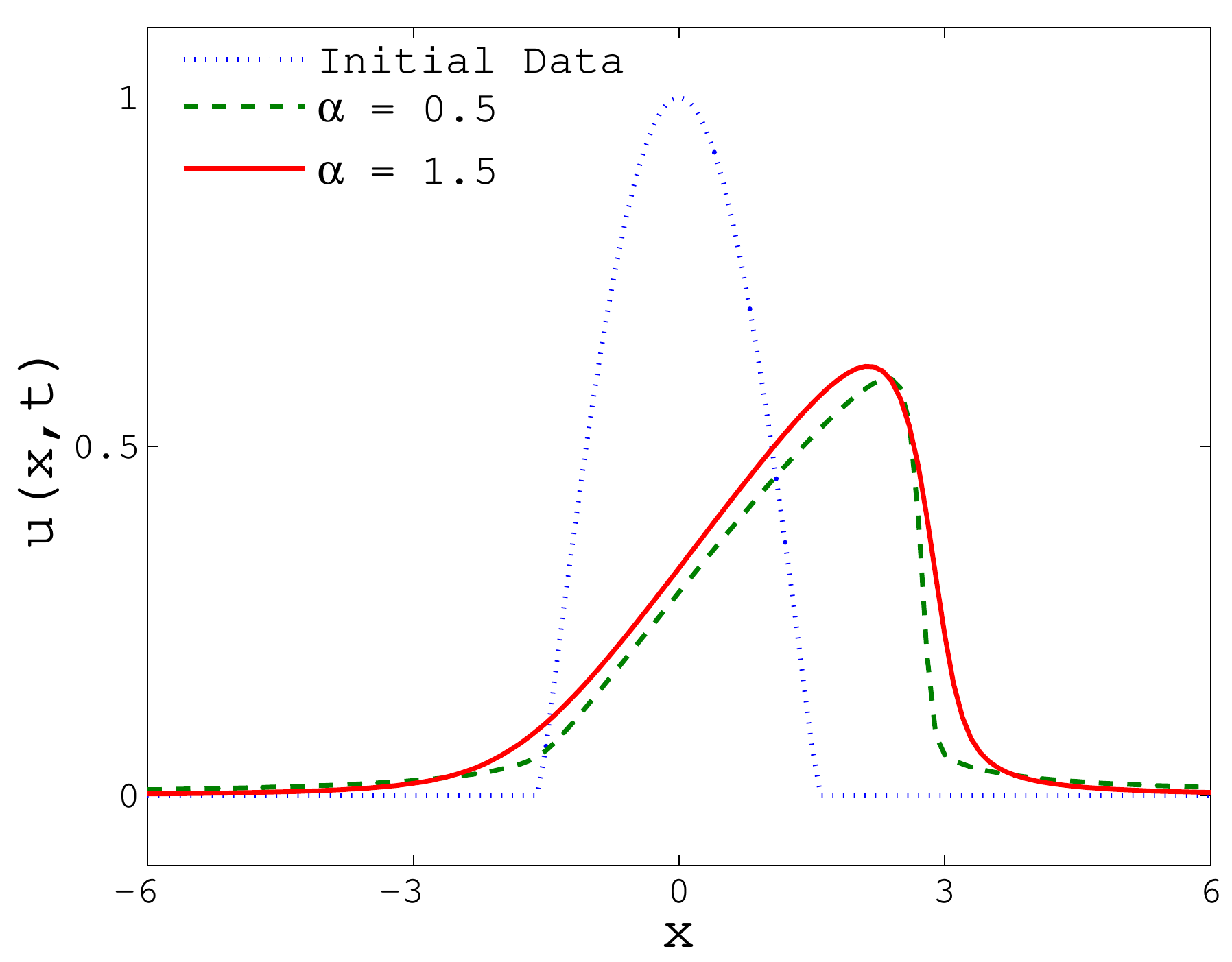}
    \end{center}
    \caption{The solution of the fractal Burgers equation at $t=4$ (left) and $t=8$ (right)
        with initial condition~\eqref{eq:burginit}.}
        \label{fig:sinsteep}
    \end{figure}

\subsection{Fractional thin film equation} The fractional thin film equation
$u_t = \nabla\cdot\big(u\nabla (-\Delta)^{\alpha/2}u\big)$ appears in fracture
dynamics~\cite{imbert2014self}. Below we focus on the equation in similarity variables, that is,
\begin{equation}\label{eq:fracthinfilm}
 u_t = \nabla \cdot \big( u\nabla (-\Delta)^{\alpha/2}u\big) + \lambda
\nabla\cdot(xu),
\end{equation}
which possesses a stationary steady instead of spreading and decaying densities.
If we choose $\lambda = 2(1+\alpha)C_{1,\alpha}$ and initial data with the total conserved mass
\[
  M=\int_\mathbb{R} u(x,t) \mathrm{d}x = \sqrt{\pi}\Gamma\left(2+\frac{a}{2}\right)
  \Gamma\left(\frac{5+a}{2}\right)^{-1},
\]
the it is each to check that
$u_\infty(x) = (1-x^2)^{1+\alpha/2}$ is the steady state. The
time evolution of the solution starting from the initial data
\begin{equation}\label{eq:thinfilmu0}
  u_0(x) = \frac{M}{\sqrt{\pi}}\left( 0.8e^{-4(x-1)^2}+1.6e^{-16(x+2)^2}\right)
\end{equation}
is shown in Figure~\ref{fig:fracthinfilm}. Since the solution is essentially
supported on an interval, the computational domain $[-4,4]$ is chosen, with
grid size $h=0.01$ and time step $\Delta t = 0.0001$. Clearly the steady state
$u_\infty(x)$ is approached as time evolves.

\begin{figure}[htp]
 \begin{center}
  \includegraphics[totalheight=0.27\textheight]{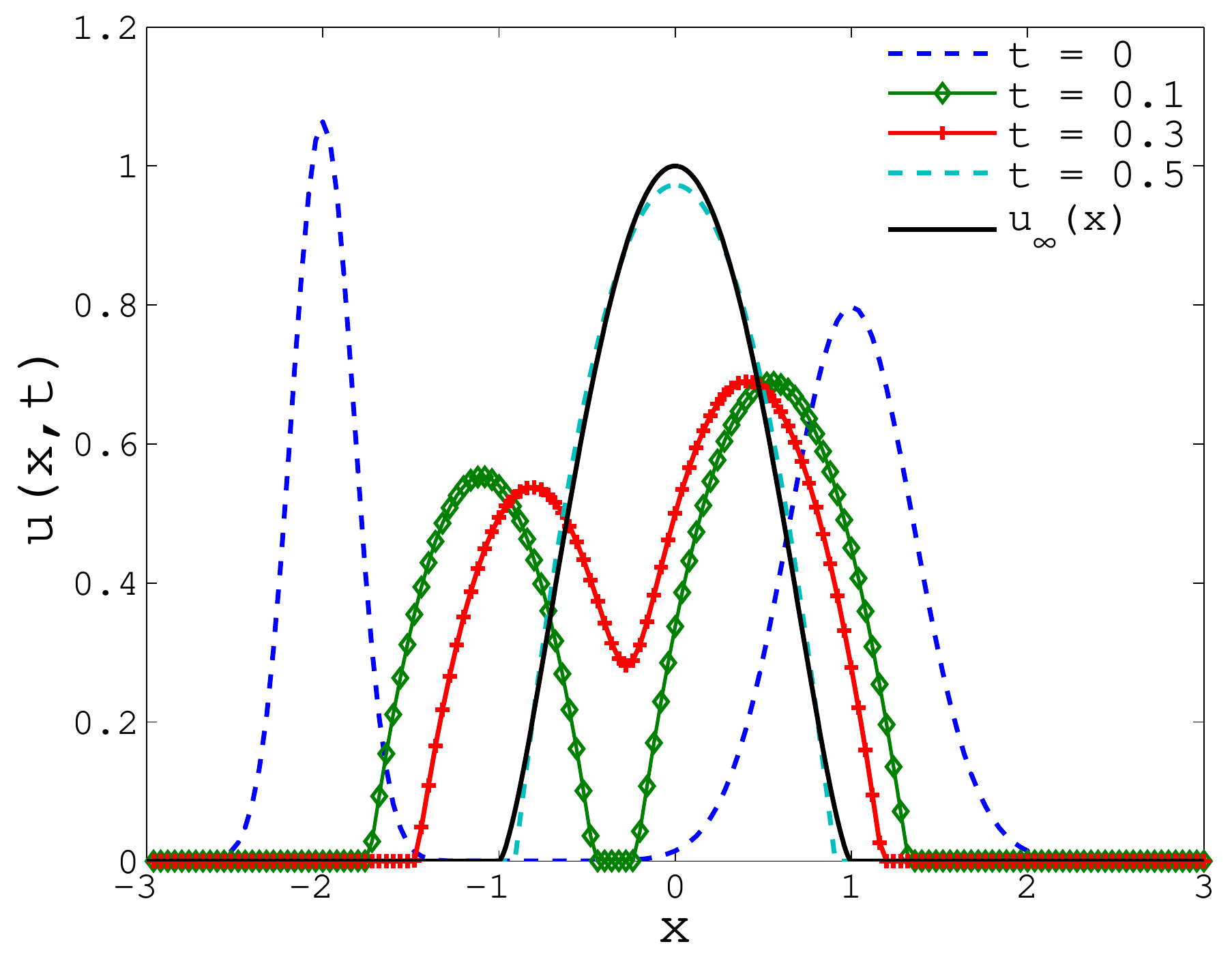}
 \end{center}
 \caption{The evolution of solutions to~\eqref{eq:fracthinfilm}
 start with initial data~\eqref{eq:thinfilmu0} consisting two Gaussians.}
\label{fig:fracthinfilm}
\end{figure}

\section{Conclusions}

In this article we performed a comprehensive study of finite difference
approximation of the fractional Laplacian operator of the form~\eqref{FLh}.
The resulting discrete operator is a multiplier in the spectral space,
and the connection between the weights and the rescaled symbol is established.
Many schemes are derived or reviewed in this context, and are compared
in various ways about the asymptotic scaling of the weights and their
order of accuracy. Other practical issues are also highlighted, like the treatment
of the far field boundary conditions and the effect the non-smoothness of the solutions
on the resulting accuracy. The schemes can be applied to many PDEs with fractional Laplacian,
and provide  robust tools for numerical investigation on these
more difficult equations besides theoretical analysis.

This numerical scheme can be generalised easily to translation-invariant operator
$\mathcal{L}$ with symbol or multiplier $\tilde{M}$ in the spectral space. That is,
\begin{equation}\label{eq:lscheme}
  \mathcal{L}u_j =  \sum_{k=-\infty}^\infty w_ku_{j-k},
\end{equation}
such that
\[
  \tilde{M}_h(\xi):= h \sum_{k=-\infty}^\infty w_ke^{-i\xi x_k} \sim \tilde{M}(\xi),
\]
near the origin. From a given symbol $\tilde{M}_h(\xi)$, the weights are expressed as
\[
  w_k = \frac{1}{2\pi}\int_{-\pi/h}^{\pi/h} \tilde{M}_h(\xi)e^{i\xi x_k} \mathrm{d}\xi.
\]
For operators like $\sqrt{1-\Delta}$, the weights $w_k$ normally depend
on the grid size $h$ in a complicated way, but for scale invariant operators
like $(-\Delta)^{\alpha/2}$ and $(-\Delta)^{-\alpha/2}$, $h$ can be factored out
from the weights, as we did in Section~\ref{sec:specrep}. Further more, if $\tilde{M}$
vanishes at the origin, it is reasonable to require that
$\tilde{M}_h(0)=0$ and the scheme~\eqref{eq:lscheme} can be written as~\eqref{FLh}.

While the framework of the scheme~\eqref{FLh} can be extended in a straightforward
way into higher dimensions, several practical limitations do appear.
Explicit expressions of the weights like~\eqref{eq:lommelsp} and~\eqref{eq:expPERweights}
are less likely for any given symbol like $M(\xi) = (\xi_1^2+\xi_2^2+\cdots+\xi_n^2)^{\alpha/2}$.
As a result, numerical quadratures of oscillatory integrals are inevitable~\cite{MR2211043,MR2068828}.
The far field boundary conditions are also more difficult to treat.
In one dimension, the function only decays in two directions. But the flat tails
in higher dimensions, if they exist, are much more complicated. As a result, there are
many important future questions to be answered for the fractional Laplacian
in particular and for nonlocal equations in general.

\end{document}